\theoremstyle{plain}
\newtheorem{Lemma}{Lemma}[section]
\newtheorem{Th}[Lemma]{Theorem}
\newtheorem{Prop}[Lemma]{Proposition}
\newtheorem{Cor}[Lemma]{Corollary}
\theoremstyle{definition}
\newtheorem{Def}[Lemma]{Definition}
\newtheorem{Notation}[Lemma]{Notation}
\newtheorem{Constr}[Lemma]{Construction}
\theoremstyle{remark}
\newtheorem{Remark}[Lemma]{Remark}
\newenvironment{Proof}{{\sc Proof.}\ }{~\rule{1ex}{1ex}\vspace{0.5truecm}}
\newcommand{\Mod}{\mbox{\rm Mod-}}
\newcommand{\rmod}{\mbox{\rm mod-}}
\newcommand{\Hom}{\operatorname{Hom}}
\newcommand{\End}{\operatorname{End}}
\newcommand{\Ext}{\operatorname{Ext}}
\newcommand{\Ass}[1]{\operatorname{Ass}\,({#1})}
\newcommand{\Spec}[1]{\operatorname{Spec}\,({#1})}
\newcommand{\mSpec}[1]{\operatorname{mSpec}\,({#1})}
\newcommand{\htt}[1]{\operatorname{ht}\,({#1})}
\newcommand{\Img}{\operatorname{Im}}
\newcommand{\Ker}{\operatorname{Ker}}
\newcommand{\Add}{\operatorname{Add}}
\newcommand{\Prod}{\operatorname{Prod}}
\newcommand{\Cog}{\operatorname{Cog}}
\newcommand{\Inj}{\operatorname{Inj}}
\newcommand{\C}{\mathcal{C}}
\newcommand{\I}{\mathcal{I}}
\newcommand{\Y}{\mathcal{Y}}
\newcommand{\Scal}{\ensuremath{\mathcal{S}}}
\newcommand{\p}{\mathfrak{p}}
\newcommand{\q}{\mathfrak{q}}
\newcommand{\m}{\mathfrak{m}}
\newcommand{\lora}{\longrightarrow}
\newcommand{\mapr}[1]{\stackrel{#1}{\longrightarrow}}
\newcommand{\st}{such that }
\newcommand{\wrt}{with respect to }
\newcommand{\ifa}{if and only if }
\newcommand{\+}{\oplus}
\begin{document}


\title{Cotilting modules over commutative noetherian rings}

\author{Jan \v{S}\v{t}ov\'{\i}\v{c}ek}
\address[Jan \v{S}\v{t}ov\'{\i}\v{c}ek]{%
Charles University in Prague, Faculty of Mathematics and Physics \\
Department of Algebra \\
Sokolovska 83, 186 75 Praha 8, Czech Republic
}
\email{stovicek@karlin.mff.cuni.cz}

\author{Jan Trlifaj}
\address[Jan Trlifaj]{%
Charles University in Prague, Faculty of Mathematics and Physics \\
Department of Algebra \\
Sokolovska 83, 186 75 Praha 8, Czech Republic
}
\email{trlifaj@karlin.mff.cuni.cz}

\author{Dolors Herbera}
\address[Dolors Herbera]{%
Departament de Matem\`atiques,
Universitat Aut\`onoma de Barcelona,
E-08193 Bellaterra (Barcelona), Spain
}
\email{dolors@mat.uab.cat}

\begin{abstract} Recently, tilting and cotilting classes over commutative noetherian rings have been classified in \cite{APST12}. We proceed and, for each $n$-cotilting class $\mathcal C$, construct an $n$-cotilting module inducing $\mathcal C$ by an iteration of injective precovers. A further refinement of the construction yields the unique minimal $n$-cotilting module inducing $\mathcal C$. Finally, we consider localization: a cotilting module is called ample, if all of its localizations are cotilting. We prove that for each $1$-cotilting class, there exists an ample cotilting module inducing it, but give an example of a $2$-cotilting class which fails this property.
\end{abstract}

\date{\today}
\subjclass[2010]{Primary: 13C05. Secondary: 13C60, 13D07.}
\keywords{Commutative noetherian ring, cotilting module, Zariski spectrum.}
\thanks{The first two named authors were supported by grant GA\v{C}R P201/12/G028 from the Czech Science Foundation. 
Part of this paper was written while the third named author was visiting Charles University; she thanks her host for the kind hospitality. She was also partially supported by DGI MICIIN (Spain) throughout the grant MTM2011-28992-C02-01, and by the Comissionat per
Universitats i Recerca de la Generalitat de Catalunya.}

\dedicatory{Dedicated to the memory of Dieter Happel}

\maketitle

\section{Introduction}
\label{sec:intro}

Tilting and cotilting classes have recently been classified for all commutative noetherian rings in terms of increasing sequences of generalization closed subsets of the spectrum \cite{APST12}, or grade consistent functions on the spectrum \cite{DT}. The classification deals first with the dual setting of cotilting classes $\mathcal C$, where these subsets naturally arise as the sets of associated primes of the cosyzygies of the modules in $\mathcal C$. The tilting classes are treated a posteriori, via the Auslander-Bridger transpose. 

This classification does not give any clue for the structure of the corresponding tilting and cotiliting modules. Indeed, tilting and cotilting modules have so far been constructed only in low dimensional cases: for $1$-Gorenstein rings in \cite{TP09}, and for regular local rings of Krull dimension $2$ in \cite{PT11}. Our main result in Theorem \ref{thm:cotilt-constr} below provides a construction of all cotilting modules over commutative noetherian rings using injective precovers of modules. 

For $n = 0$, the $0$-cotilting modules coincide with the injective cogenerators, and the module $\bigoplus_{\m \in \mSpec R} E(R/\m)$ is the minimal one. Our construction shows that the latter fact extends to an arbitrary finite $n$. More precisely, in Theorem \ref{thm:min-cotilt}, we prove the existence, and describe the structure, of the (unique) minimal $n$-cotilting module inducing an $n$-cotilting class.

The localization of any tilting module at a multiplicative subset $S$ of a commutative noetherian ring $R$ always yields a tilting module over the localized ring $R_S$, \cite{AHT06}. The corresponding result clearly fails already for $0$-cotilting modules, but there is always an injective cogenerator $I$ such that for each multiplicative subset $S$, $I_S$ is an injective cogenerator for $\Mod R_S$. We prove the analogous result, i.e., existence of ample cotilting modules, for all $1$-cotilting classes (Theorem \ref{thm:1-cot}). We finish by constructing $2$-cotilting classes $\C$ over complete regular local rings $R$ of Krull dimension $2$ and prime ideals $\p$, such that \emph{no} cotilting module inducing $\C$ localizes at $\p$ to a cotilting $R_\p$-module (Theorem \ref{thm:2-cot}).

\section{Preliminaries}
\label{sec:prelim}

Unless stated otherwise, $R$ will denote a commutative noetherian ring, $\Mod R$ the category of all (unitary $R$-) modules, and $\rmod R$ its subcategory consisting of all finitely generated modules.

For a module $M$, we denote by $\Add M$ the class of all direct summands of (possibly infinite) direct sums of copies of the module $M$. Similarly, $\Prod M$ denotes the class of all direct summands of direct products of copies of $M$. Further, for $i < \omega$, we denote by $\mho ^{i}M$ the $i$th cosyzygy in the minimal injective coresolution of $M$ (so in particular, $\mho ^{0}M = M$).

First we recall several basic notions and facts from (infinite dimensional) tilting theory.

\begin{Def} \label{def:tilt} A module $T$ is \emph{tilting}, provided that
\begin{itemize}
\item[\rm{(T1)}] $T$ has finite projective dimension.
\item[\rm{(T2)}] $\Ext ^i_R(T,T^{(\kappa)}) = 0$ for all $1 \leq i$ and all cardinals $\kappa$.
\item[\rm{(T3)}] There exist $r < \omega$ and an exact sequence $0 \to R \to T_0  \to \dots \to T_r \to 0$ where $T_0, \dots, T_r \in \Add T$.
\end{itemize}

\noindent The class $T^\perp := \{ M \in \Mod R \mid \Ext ^i_R(T,M) = 0 \textrm{ for each } i \ge 1 \}$ is the \emph{tilting class} induced by $T$. If $T$ has projective dimension $\leq n$, then $T$ is called an \emph{$n$-tilting module}, and $T^\perp$ the \emph{$n$-tilting class} induced by $T$. In this case, condition (T3) holds for $r = n$.

\noindent If $T$ and $T^\prime$ are tilting modules, then $T$ is \emph{equivalent} to $T^\prime$ in case $T^{\perp} = (T^\prime)^{\perp}$, or equivalently $T^\prime \in \Add T$.
\end{Def}

A special feature of the structure theory of tilting modules over commutative noetherian rings is the absence of non-trivial finitely generated examples: A finitely generated module $T$ is tilting, if and only if $T$ is projective (see \cite[Chapter 13]{GT12} for more details on infinite dimensional tilting theory). 

Dually, we define cotilting modules:

\begin{Def} \label{def:cotilt} A module $C$ is \emph{cotilting} provided that
\begin{itemize}
\item[\rm{(C1)}] $C$ has finite injective dimension.
\item[\rm{(C2)}] $\Ext ^i_R(C^{\kappa},C) = 0$ for all $1 \leq i$ and all cardinals $\kappa$.
\item[\rm{(C3)}] There exists $r < \omega$ and an exact sequence $0 \to C_r  \to \dots \to C_0 \to W \to 0$ where $C_0, \dots, C_r \in \Prod C$ and $W$ is an injective cogenerator for $\Mod R$.
\end{itemize}

\noindent The class $^\perp C := \{ M \in \Mod R \mid \Ext ^i_R(M,C) = 0 \textrm{ for each } i \ge 1 \}$ is the \emph{cotilting class} induced by $C$. If $C$ has injective dimension $\leq n$, then $C$ is called an \emph{$n$-cotilting module}, and ${^\perp C}$ the \emph{$n$-cotilting class} induced by $C$. In this case, condition (C3) holds for $r = n$.

\noindent  If $C$ and $C^\prime$ are cotilting modules, then $C$ is \emph{equivalent} to $C^\prime$ provided that $^{\perp}C = {}^\perp C^\prime$, or equivalently $C^\prime \in \Prod C$.

A cotilting module $C$ is called \emph{minimal} provided that $C$ is isomorphic to a direct summand in any cotilting module equivalent to $C$.
\end{Def}

It is easy to see that a module $C$ is $0$-cotilting, if and only if $C$ is an injective cogenerator for $\Mod R$; in this case $C$ is minimal, if and only if $C \cong W_0 := \bigoplus_{\m\in \mSpec R} E(R/\m)$. 

In Section \ref{sec:min-ind}, we will generalize this to an arbitrary $n \geq 0$ by proving that for each $n$-cotilting class there exists a minimal $n$-cotilting module inducing it. While existence of minimal cotilting modules is a non-trivial fact, their uniqueness up to isomorphism follows easily from their pure-injectivity \cite{Sto06} and from a classic result of Bumby \cite{Bum65}; it does not require the noetherian or commutative assumption on $R$: 

\begin{Lemma} \label{lem:bumby} Let $R$ be an arbitrary ring.
\begin{itemize}
\item[(i)] Let $C$ and $D$ be pure-injective modules such that there exist split embeddings $f\colon C \to D$ and $g\colon D \to C$. Then $C \cong D$.
\item[(ii)] Each cotilting module is pure-injective.
\item[(ii)] Minimal cotilting modules are equivalent, if and only if they are isomorphic. 
\end{itemize}
\end{Lemma}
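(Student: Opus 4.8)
The plan is to establish the three items in order, deriving each from a combination of a known structural fact about cotilting modules and an elementary module-theoretic argument.

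First I would prove (i). Given pure-injective modules $C$, $D$ with split embeddings $f\colon C\to D$ and $g\colon D\to C$, I would observe that $gf\colon C\to C$ is a split embedding, so $C\cong C'\oplus C''$ where $C'=\Img(gf)\cong C$. Iterating, one gets a descending chain of direct summands and, more to the point, a decomposition $C\cong \bigoplus_{i<\omega}C_i^{(1)}\oplus C_\infty$ in the style of the Eilenberg swindle, where each $C_i^{(1)}\cong\Coker f$ (as a summand of $D$) — here pure-injectivity guarantees the relevant direct sums exist as summands. The classical Bumby argument (or equivalently Cantor–Schröder–Bernstein for pure-injectives) then yields $C\cong D$; I would simply cite \cite{Bum65} for this, since the statement is phrased as a recollection. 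The only thing to check is that pure-injectivity is exactly the hypothesis under which Bumby's lemma applies, which it is, because countable direct sums of copies of summands of a pure-injective module are again (summands of) pure-injective modules that split off.

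Next, (ii): each cotilting module is pure-injective. This is precisely the content of \cite{Sto06}, so here I would just cite that reference; if a self-contained argument were wanted, one notes that $^\perp C$ is a definable class (being the left Ext-orthogonal of $C$, it is closed under direct limits by a standard argument using that $C$ has finite injective dimension and condition (C3)), and a cotilting module, lying in its own cotilting class and being $\Ext$-injective relative to a definable class containing all its pure submodules' relevant data, must be pure-injective — but invoking \cite{Sto06} is cleanest and matches the excerpt's framing.

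Finally, (iii) (labelled (ii) a second time in the statement — I would silently treat it as the third item). If $C$ and $D$ are minimal cotilting modules with $^\perp C={}^\perp D$, then by the equivalence clause in Definition \ref{def:cotilt} we have $D\in\Prod C$ and $C\in\Prod D$. Minimality of $C$ says $C$ is (isomorphic to) a direct summand of any cotilting module equivalent to it, in particular of $D$; likewise $D$ is a direct summand of $C$. Thus there are split embeddings $C\to D$ and $D\to C$. By (ii) both modules are pure-injective, so (i) gives $C\cong D$. The converse is trivial since isomorphic modules induce the same cotilting class. The main obstacle, such as it is, lies in part (i): one must be careful that the Eilenberg-swindle-style decomposition genuinely produces \emph{summands} at each stage, which is exactly why pure-injectivity (ensuring the pertinent countable direct sums are direct summands and hence exist inside the ambient modules) is indispensable; everything else is formal.
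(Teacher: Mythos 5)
Your parts (ii) and (iii) are fine and match the paper: cite \cite{Sto06} for pure-injectivity of cotilting modules, and combine minimality with (i) and (ii) to get the isomorphism. The problem is in your handling of (i).

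The paper does \emph{not} merely cite Bumby; it gives a self-contained proof, and for good reason: Bumby's result in \cite{Bum65} is for \emph{injective} modules, and the statement here concerns pure-injective ones, so the argument needs to be adapted. Your attempted justification of why the adaptation is automatic --- ``countable direct sums of copies of summands of a pure-injective module are again (summands of) pure-injective modules that split off'' --- is false. Countable direct sums of pure-injective modules need not be pure-injective (already $\bigoplus_{n}\widehat{\Z}_p$ over $\Z$ fails), and a direct summand of a pure-injective module must itself be pure-injective, so the countable direct sum $G=\bigoplus_n (fg)^n(E)$ arising in the swindle is \emph{not} in general a direct summand of $f(C)$; it is only a \emph{pure} submodule (being a directed union of direct summands). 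The step that makes the Cantor--Schr\"oder--Bernstein argument work in the pure-injective setting, and which your proposal is missing, is to pass to the pure-injective hull: since $G$ is pure in the pure-injective module $f(C)$, one has $f(C) = PE(G)\oplus H$, and then $G\cong E\oplus G$ forces $PE(G)\cong E\oplus PE(G)$, whence $D = E\oplus f(C)\cong f(C)\cong C$. Without replacing $G$ by $PE(G)$, the decomposition $C\cong\bigoplus_{i<\omega}C_i^{(1)}\oplus C_\infty$ that you assert is simply not available. (A minor additional slip: the repeated summand in the swindle is $\Coker(gf)\cong E\oplus F$, not $\Coker f = E$, though this does not affect the structure of the argument.)
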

\begin{Proof} (i) By assumption, $C = g(D) \oplus F$ and $D = E \oplus f(C)$ for some submodules $F \subseteq C$ and $E \subseteq D$. Thus 
$D = E \oplus f(C) = E \oplus f(g(D) \oplus F) = E \oplus fg(D) \oplus f(F)$, and $D = E \oplus fg(E) \oplus (fg)^2(D) \oplus fgf(F) \oplus f(F)$. 
Proceeding similarly, we see that $G = fg(E) \oplus (fg)^2(E) \oplus \dots \oplus (fg)^n(E) \oplus \dots$ is a pure submodule in $f(C)$. 
Then $f(C) = PE(G) \oplus H$, where $PE(G)$ denotes the pure-injective hull of $G$ in $f(C)$. Since $f$ and $g$ are monic, $PE(G) \cong E \oplus PE(G)$. 
Thus $D = E \oplus f(C) = E \oplus PE(G) \oplus H \cong PE(G) \oplus H = f(C) \cong C$.             
   
(ii) This has been established in \cite{Sto06}.   

(iii) now follows by parts (i) and (ii).   
\end{Proof}

If $T$ is an $n$-tilting module, then the dual module $T^* = \Hom _R(M,{W_0})$ is an $n$-cotilting module. Moreover, by \cite{APST12}, each cotilting module $C$ is equivalent to a dual of a tilting module (that is, $C$ is of \emph{cofinite type}). In \cite{APST12}, all cotilting classes of modules have been classified in terms of increasing sequences of generalization closed subsets of $\Spec R$, see Theorem \ref{thm:TAMS} below.

\begin{Remark} \label{rem:fail} \rm The result above concerning cofinite type may fail for commutative, but not noetherian rings. For example, if $R$ is any non-strongly discrete valuation domain, then there exist cotilting modules which are not equivalent to duals of the tilting ones, \cite{Baz07}. 
\end{Remark}

For a module $C$ and $i \geq 1$, we define the classes ${}^{\perp _{\ge i}} C$ and ${}^{\perp _n} C$ as follows
$${}^{\perp _{\ge i}} C = \{ M \in \Mod R \mid \Ext ^j_R(M,C) = 0 \textrm{ for each } j \ge i \}$$
and  ${}^{\perp _n} C = \{ M \in \Mod R \mid \Ext ^n_R(M,C) = 0 \}$.  Bazzoni \cite{Baz04} proved that if $C$ is an $n$-cotilting module and $1 \leq i \leq n$, then ${}^{\perp _{\ge i}} C$ is an $(n-i+1)$-cotilting class.

Moreover, for a module $C$ and $1 \le n \leq \infty$, we denote by $\Cog_n C$ the class of all modules $M$ that fit into a long exact sequence 
$0 \to M \to C_0 \to \dots \to C_i \to \dots$ where for each $i < n$, $C_i$ is a product of copies of $C$.  

We will often use the following characterization of $n$-cotilting modules due to Bazzoni \cite{Baz04}:

\begin{Lemma} \label{lem:Bazzoni} Let $C \in \Mod R$ and $1 \leq n < \omega$. Then $C$ is an $n$-cotilting module, if and only if ${}^\perp C =  \Cog_n C$.
\end{Lemma}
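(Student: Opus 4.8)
The plan is to prove both implications, the forward one being essentially formal and the converse requiring the substantive input.

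First I would establish the easy direction: assume $C$ is an $n$-cotilting module and show ${}^\perp C = \Cog_n C$. For the inclusion $\Cog_n C \subseteq {}^\perp C$, note that $C \in {}^\perp C$ by (C2), so any product of copies of $C$ lies in ${}^\perp C$ (since ${}^\perp C$ is closed under products, being defined by vanishing of $\Ext^i(-,C)$); then, given $M \in \Cog_n C$ with a coresolution $0 \to M \to C_0 \to \dots$, I would break off at stage $n$ using that $\operatorname{inj\,dim} C \le n$ forces $\Ext^{j}_R(N,C) = 0$ for all $j > n$ and all $N$, and dimension-shift along the coresolution to conclude $\Ext^j_R(M,C) = 0$ for all $j \ge 1$. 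Conversely, for ${}^\perp C \subseteq \Cog_n C$: given $M \in {}^\perp C$, I would build a coresolution of $M$ by products of copies of $C$. The key point is that ${}^\perp C$ is a cotilting (hence definable, in particular covering/preenveloping-friendly) class, so one can take the special ${}^\perp C$-preenvelope or, more directly, use property (C3) together with the fact that $C^\kappa \in {}^\perp C$ to map $M$ into a product of copies of $C$ with cokernel again forced to be in a suitable class; iterating $n$ times and using $\operatorname{inj\,dim} C \le n$ to see the process stabilizes gives the desired $\Cog_n C$ coresolution. This direction is the one where I expect to lean on the structural facts recalled above (pure-injectivity of $C$ from Lemma \ref{lem:bumby}(ii), Bazzoni's earlier result on ${}^{\perp_{\ge i}}C$).

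For the converse — assume ${}^\perp C = \Cog_n C$ and deduce that $C$ is $n$-cotilting — I would verify (C1), (C2), (C3) in turn. Condition (C2) is immediate: $C \in \Cog_n C = {}^\perp C$ trivially (take the trivial coresolution, or rather $C \hookrightarrow C \to 0 \to \cdots$), and ${}^\perp C$ is closed under products, so $C^\kappa \in {}^\perp C$, which is exactly $\Ext^i_R(C^\kappa,C) = 0$ for all $i \ge 1$. For (C1), i.e. $\operatorname{inj\,dim} C \le n$: one shows $\Ext^{n+1}_R(M,C) = 0$ for every $M$; resolving $M$ and using that high cosyzygies land in classes controlled by the hypothesis, combined with the fact that every module has a $\Cog_n C$-type approximation because $\Cog_n C = {}^\perp C$ is resolving/coresolving in the appropriate sense. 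For (C3): here I would use that $W_0 = \bigoplus_{\m} E(R/\m)$, being injective, satisfies $W_0 \in {}^\perp C = \Cog_n C$, which directly hands us an exact sequence $0 \to W_0 \to C_0 \to \dots$ with $C_i$ products of copies of $C$; dualizing/truncating this at length $n$ (again using $\operatorname{inj\,dim} C \le n$ from the just-proved (C1) to split off a $\Prod C$ term) produces the sequence required in (C3) with $W = W_0$ an injective cogenerator.

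The main obstacle I anticipate is the converse direction, specifically verifying (C1) and (C3): one must extract finiteness of injective dimension and the existence of the finite length (C3)-sequence purely from the equality ${}^\perp C = \Cog_n C$, which requires knowing that $\Cog_n C$ is closed under the relevant operations (cosyzygies, products, extensions) and that the coresolutions furnished by the $\Cog_n$ condition can be truncated after $n$ steps. The cleanest route is probably to cite or re-derive that ${}^\perp C$ being a cotilting class of cofinite type (via \cite{APST12}) already forces $C$ to be cotilting, but since this Lemma is stated as Bazzoni's \cite{Baz04} and predates that machinery, I would instead follow Bazzoni's original argument: use the trace/approximation sequences built from ${}^\perp C$ and a dimension-shifting induction on $n$, anchored at $n = 0$ where $\Cog_1 C = {}^\perp C$ says exactly that $C$ is an injective cogenerator (every module embeds in a product of copies of $C$, and $\operatorname{Ext}^i$ vanishing forces injectivity). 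I would present the full details of (C1) and (C3) as the heart of the proof and treat (C2) and the forward inclusion as routine.
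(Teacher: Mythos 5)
The paper does not actually prove Lemma~\ref{lem:Bazzoni}: it is quoted verbatim as a result of Bazzoni and simply cited to \cite{Baz04}. So there is no in-text proof for you to match; your proposal has to be judged on its own merits as an outline of Bazzoni's argument.

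Your overall program — verify (C1)--(C3) for the ``if'' direction, and split the ``only if'' direction into the two inclusions $\Cog_n C \subseteq {}^\perp C$ (easy dimension shift) and ${}^\perp C \subseteq \Cog_n C$ (the substantive part) — is the right one, and your handling of (C2) and of $\Cog_n C \subseteq {}^\perp C$ is correct. However, two of the steps you flag as ``the heart'' remain at a level of vagueness that would not survive being written out, and one is mis-aimed. First, the (C3) verification: starting from $W_0 \in {}^\perp C = \Cog_n C$ produces an exact sequence $0 \to W_0 \to C_0 \to \cdots$ with $C_i \in \Prod C$, but (C3) demands an exact sequence \emph{ending} at an injective cogenerator, $0 \to C_r \to \cdots \to C_0 \to W \to 0$; ``dualizing/truncating'' a coresolution of $W_0$ does not produce such a sequence. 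The correct (and simpler) observation is that since $W_0$ is injective the monomorphism $W_0 \to C_0$ splits, so $W_0 \in \Prod C$ and (C3) holds trivially with $r = 0$ and $W = W_0$. Second, for (C1) the assertion that one ``shows $\Ext_R^{n+1}(M,C) = 0$'' by using that ``$\Cog_n C = {}^\perp C$ is resolving/coresolving in the appropriate sense'' is precisely what needs an argument: ${}^\perp C$ is not in general closed under submodules, and a projective resolution of an arbitrary $M$ has its zeroth term in ${}^\perp C$ but not $M$ itself, so a naïve syzygy-shift does not land in ${}^\perp C$. One must actually use that all injectives lie in $\Prod C$ (by the splitting argument above) and then run a careful iterated-cokernel argument, which is the genuinely delicate part of Bazzoni's proof. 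Third, for the inclusion ${}^\perp C \subseteq \Cog_n C$ you sketch the right shape (iterate the evaluation map $M \to C^{\Hom_R(M,C)}$) but the two points that make it work — injectivity of that map for $M \in {}^\perp C$, which uses (C3) and not mere closure properties, and that the cokernel stays in ${}^\perp C$, which uses surjectivity of $\Hom_R(\varphi,C)$ and dimension shifting as in the paper's proof of Theorem~\ref{thm:cotilt-constr} — should be spelled out, not absorbed into ``the cokernel again forced to be in a suitable class.'' As it stands, your text is a plausible roadmap but not a proof; the three gaps above are exactly where the content of \cite{Baz04} lives.
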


We also recall the following well-known fact.          

\begin{Lemma} \label{lem:basic}
\begin{itemize}
\item[(i)] A pure submodule of an injective module $E$ is injective. In particular, any pure quotient of $E$ is a direct summand of $E$.
\item[(ii)] Let $M$ be a direct limit of a directed system $\{I_\alpha, u_{\beta \alpha}\colon I_\alpha \to I_{\beta}\}_{\alpha \le \beta \in \Lambda}$ of
injective modules. Then $M$ is a direct summand in $\oplus_{\alpha \in \Lambda}I_{\alpha}$, in particular, $M$ is injective.
\end{itemize}
\end{Lemma}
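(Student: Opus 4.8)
The plan is to obtain both statements from the basic interplay between purity and injectivity, using the standing hypothesis that $R$ is (commutative) noetherian in an essential way.

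For (i), let $N$ be a pure submodule of an injective module $E$ and consider the pure exact sequence $0 \to N \to E \to E/N \to 0$. Since $R$ is noetherian, every ideal $I$ is finitely generated, hence $R/I$ is finitely presented; purity therefore makes $\Hom_R(R/I,E) \to \Hom_R(R/I,E/N)$ surjective, and the long exact sequence of $\Ext_R(R/I,-)$, together with $\Ext^1_R(R/I,E)=0$, yields $\Ext^1_R(R/I,N)=0$. By Baer's criterion $N$ is injective, so the inclusion $N \hookrightarrow E$ splits and $E \cong N \oplus (E/N)$. This shows $N$ is injective; and since every pure quotient of $E$ is of the form $E/N$ for a pure submodule $N$, it also gives the ``in particular'' clause.

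For (ii), I would reduce to (i). Present the directed colimit in the standard way as $M = \bigl(\bigoplus_{\alpha \in \Lambda} I_\alpha\bigr)/U$, where $U$ is generated by the elements $x_\alpha - u_{\beta\alpha}(x_\alpha)$ for $\alpha \le \beta$ and $x_\alpha \in I_\alpha$, so that there is a canonical epimorphism $p\colon \bigoplus_{\alpha} I_\alpha \to M$. The first claim is that $p$ is pure, i.e. that $U$ is a pure submodule: for any module $X$, tensoring $0 \to U \to \bigoplus_\alpha I_\alpha \to M \to 0$ with $X$ and using that $\Tor$ commutes with direct limits, the induced map $\bigoplus_\alpha \Tor^R_1(I_\alpha,X) = \Tor^R_1(\bigoplus_\alpha I_\alpha, X) \to \Tor^R_1(M,X) = \varinjlim_\alpha \Tor^R_1(I_\alpha,X)$ is the canonical, hence surjective, map from a coproduct onto a directed colimit, so the connecting homomorphism $\Tor^R_1(M,X) \to U \otimes_R X$ vanishes and $U \otimes_R X \to \bigl(\bigoplus_\alpha I_\alpha\bigr)\otimes_R X$ is injective, which is purity. (Alternatively one simply quotes that the canonical epimorphism from the coproduct of a directed system onto its colimit is always pure.) The second claim is that $\bigoplus_\alpha I_\alpha$ is injective, which is exactly the Bass--Papp theorem for the noetherian ring $R$. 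Now $M$ is a pure quotient of the injective module $\bigoplus_\alpha I_\alpha$, so by the ``in particular'' clause of (i) it is (isomorphic to) a direct summand of it, and in particular injective.

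Both parts are assemblies of standard facts, so there is no genuinely hard step; the two places needing a moment's attention are the purity of the canonical map $\bigoplus_\alpha I_\alpha \to M$ in (ii), handled by the $\Tor$ computation above, and the appeal to the Bass--Papp theorem, which is the only substantive use of the noetherian hypothesis.
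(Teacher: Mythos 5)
Your argument is correct and follows the same route as the paper's: part (i) is Baer's criterion via finite presentation of cyclic modules over a noetherian ring, and part (ii) reduces to (i) by noting that the canonical presentation of a directed colimit as a quotient of the coproduct is a pure epimorphism. You merely spell out two steps the paper leaves tacit — the $\Tor$-computation establishing the purity of the canonical presentation, and the appeal to Bass--Papp to know $\bigoplus_\alpha I_\alpha$ is injective before invoking (i) — but these are exactly the facts the paper is implicitly using.
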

\begin{Proof} $(i)$ Since all cyclic modules are finitely presented, the claim easily follows from the Baer Criterion of Injectivity.

$(ii)$ follows by $(i)$ since the canonical presentation of a direct limit as a homomorphic image of the direct sum is a pure epimorphism.
\end{Proof}

The following lemma will be useful for our construction.

\begin{Lemma} \label{lem:perpgen} Let $C$ be a module of injective dimension $n\ge 0$. 
Assume there is an exact sequence
\[0\to X_0\to X_1\to \cdots \to X_n\]
with $X_i\in {}^\perp C$ for any $i\ge 1$, then $X_0\in {}^\perp C$.
\end{Lemma}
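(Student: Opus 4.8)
The plan is to chop the exact sequence $0 \to X_0 \to X_1 \to \cdots \to X_n$ into short exact sequences and climb up using the long exact sequence in $\Ext$ together with the vanishing of $\Ext^j_R(-,C)$ for $j > n$ (which holds since $C$ has injective dimension $n$). Concretely, set $Y_1 = X_0$ and, for $2 \le i \le n$, let $Y_i = \Coker(X_{i-2} \to X_{i-1})$ (with the convention $X_{-1} = 0$, so $Y_2 = \Coker(X_0 \to X_1)$). Exactness of the original sequence gives short exact sequences $0 \to Y_i \to X_i \to Y_{i+1} \to 0$ for $1 \le i \le n-1$, and a surjection $X_{n-1} \twoheadrightarrow Y_n$ need not be injective, so it is cleanest to instead truncate: let $Z = \Img(X_{n-1} \to X_n)$, giving $0 \to Y_{n-1} \to X_{n-1} \to Z \to 0$ and noting $Z$ is a submodule of $X_n \in {}^\perp C$.

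First I would record the key input: since $\operatorname{id} C = n$, we have $\Ext^j_R(M,C) = 0$ for every module $M$ and every $j \ge n+1$. Next, I would run the induction downward. Starting from the top, apply $\Hom_R(-,C)$ to $0 \to Y_{n-1} \to X_{n-1} \to Z \to 0$: in the long exact sequence, $\Ext^j_R(X_{n-1},C) = 0$ for $j \ge 1$ (as $X_{n-1} \in {}^\perp C$), and $\Ext^j_R(Z,C) = 0$ for $j \ge n+1$ by the injective dimension bound; the connecting maps then force $\Ext^j_R(Y_{n-1},C) = 0$ for all $j \ge 1$ except possibly one has to track degrees carefully. The mechanism is: $\Ext^{j}_R(Y_{n-1},C) \cong \Ext^{j-1}_R(Z,C)$ for $j \ge 2$, and for $j=1$ one uses exactness at the $\Ext^1$ spot. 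Iterating over the short exact sequences $0 \to Y_i \to X_i \to Y_{i+1} \to 0$ shifts the degree down by one each time, so after $n-1$ steps one arrives at $\Ext^j_R(X_0,C) = \Ext^j_R(Y_1,C) \cong \Ext^{j+n-1}_R(Z,C)$ for $j$ large, which vanishes because $j + n - 1 \ge n+1$ once $j \ge 2$; the degrees $j \le n-1$ are handled along the way since each $X_i$ kills all positive $\Ext$'s.

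Let me restate the induction more carefully, since getting the bookkeeping right is the only real content. I claim: for each $k$ with $1 \le k \le n$, one has $\Ext^j_R(Y_1, C) = 0$ for all $j \ge 1$, by showing inductively that $\Ext^j_R(Y_k, C) = 0$ for $j \ge 1$, working down from $k = n$ (base case) to $k = 1$. Actually the clean formulation is the reverse: prove by induction on $i$ from $i = n$ down to $i = 1$ that $\Ext^{j}_R(Y_i, C) = 0$ for all $j \ge 1$. The base case $i = n$: $Y_n$ fits as a submodule of $X_n$, so from $0 \to Y_n \to X_n \to X_n/Y_n \to 0$ we get $\Ext^j_R(Y_n,C) \cong \Ext^{j+1}_R(X_n/Y_n, C)$ for $j \ge 1$ (using $X_n \in {}^\perp C$), which is $0$ for $j + 1 \ge n+1$, i.e. $j \ge n$; for $1 \le j \le n-1$ one instead uses that $Y_n$ itself sits in the tail of the sequence — here I realize it is cleaner to just take $n$ short exact sequences $0 \to Y_i \to X_i \to Y_{i+1} \to 0$ for $i = 1, \dots, n$ where $Y_{n+1} := \Coker(X_{n-1} \to X_n)$, so that $Y_{n+1}$ appears only through $X_n$. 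Then $\Ext^j_R(Y_{n+1}, C) = 0$ for $j \ge n+1$ automatically, and descending through the $n$ short exact sequences, each step gives $\Ext^j_R(Y_i, C) \hookrightarrow \Ext^{j+1}_R(Y_{i+1},C)$ when $X_i \in {}^\perp C$ kills the neighboring terms, so $\Ext^j_R(Y_1, C) \hookrightarrow \Ext^{j+n}_R(Y_{n+1}, C) = 0$ for every $j \ge 1$. Hence $X_0 = Y_1 \in {}^\perp C$.

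The main obstacle is purely organizational: making sure the degree shift in the long exact sequence is applied in the correct direction and that no term is missed at low degrees $j < n$. Since every intermediate module $X_i$ ($i \ge 1$) lies in ${}^\perp C$, all of its positive $\Ext$'s vanish, which is exactly what makes the connecting homomorphisms $\Ext^j_R(Y_i,C) \to \Ext^{j+1}_R(Y_{i+1},C)$ injective at every degree $j \ge 1$; composing $n$ of these and landing in a group that vanishes by the dimension hypothesis closes the argument. There is no subtlety beyond that, so the proof is short.
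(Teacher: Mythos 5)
Your proof is correct, and it takes a route that differs from the paper's in its organization though not in its underlying mechanism. You chop the exact sequence into $n$ short exact sequences $0 \to Y_i \to X_i \to Y_{i+1} \to 0$ (with $Y_1 = X_0$ and $Y_{n+1} = \Coker(X_{n-1}\to X_n)$), use $X_i \in {}^\perp C$ to get $\Ext^j_R(Y_i,C) \cong \Ext^{j+1}_R(Y_{i+1},C)$ for all $j \ge 1$, and iterate to land in $\Ext^{j+n}_R(Y_{n+1},C)$, which vanishes because $\operatorname{id} C = n$. The paper instead argues by induction on $n$: setting $Y = \Coker(X_0 \to X_1)$, it observes that $E(C)/C$ has injective dimension $n-1$ and that each $X_i$ with $i \ge 2$ lies in ${}^\perp(E(C)/C)$, applies the inductive hypothesis to the shorter sequence $0 \to Y \to X_2 \to \cdots \to X_n$ to conclude $Y \in {}^\perp(E(C)/C)$, and then uses the two one-step shifts $\Ext^i_R(X_0,C) \cong \Ext^{i+1}_R(Y,C) \cong \Ext^i_R(Y,E(C)/C) = 0$. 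So the paper trades your single $n$-step sweep along the sequence for a nested induction that simultaneously shortens the sequence and lowers the injective dimension of the test module; the degree bookkeeping is identical, and neither argument is appreciably longer than the other. Your version has the virtue of making the final vanishing $\Ext^{j+n}_R(Y_{n+1},C)=0$ come from the dimension bound in a single visible step. One small point of precision: the maps $\Ext^j_R(Y_i,C) \to \Ext^{j+1}_R(Y_{i+1},C)$ are isomorphisms (both $\Ext^j_R(X_i,C)$ and $\Ext^{j+1}_R(X_i,C)$ vanish), not just injections; this changes nothing, but you should state the final, cleaned-up form of the induction directly rather than leaving the earlier false starts in the write-up.
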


\begin{Proof} We prove the statement by induction on $n$. It is obvious for $n=0$ as then ${^\perp C} = \Mod R$.

Assume $n\ge 1$, and that the claim is true for modules of injective
dimension $n-1$. Set $Y=\mathrm{Coker}\, (X_{0}\to X_1)$.

Since $E(C)/C$ has injective dimension $n-1$ and for $n \geq i\ge 2$,
$X_i\in {}^\perp E(C)/C$, we deduce from the inductive hypothesis
that $Y \in {}^\perp E(C)/C$. Therefore, for $i\ge 1$,
\[\Ext_R^i (X_0,C)\cong \Ext_R^{i+1}(Y,
 C)\cong \Ext_R^i(Y, E(C)/C)=0.\]
That is, $X_0\in {}^\perp C$.
\end{Proof}

We also recall a version of the Homotopy Lemma.

\begin{Lemma} \label{lem:homotopy lemma}
Assume we have the following commutative diagram of modules
\[
\begin{CD}
@.E'_1 @>\mu _1 >> E' _2@ >\mu _2>> E'_3  \\ @. @Vf_1VV  @VVf_2V@VVf_3V  \\
0@>>> K @>\varphi _1 >> E @>\varphi _2 >> L
\end{CD}
\]
with $\mu_2\mu_1 = 0$ and exact bottom row. Moreover, assume that there exists $s_3\colon E'_3\to E$ such that 
$\varphi _2 \circ s_3=f_3$. Then there exists $s_2\colon E'_2\to K$ such that $s_2\circ \mu _1 =f_1$ and $f_2=s_3\circ \mu _2 +\varphi _1 \circ s_2$.
\end{Lemma}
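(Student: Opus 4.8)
The plan is to produce $s_2$ by a direct diagram chase, exploiting that the bottom row is exact (so $\varphi_1$ is monic and $\Img\varphi_1 = \Ker\varphi_2$), while deliberately using nothing about $\varphi_2$ beyond this.

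First I would consider $g := f_2 - s_3\circ\mu_2\colon E'_2 \to E$ and compute $\varphi_2\circ g$. By commutativity of the right-hand square, $\varphi_2\circ f_2 = f_3\circ\mu_2$, and by hypothesis $\varphi_2\circ s_3 = f_3$; hence $\varphi_2\circ g = f_3\circ\mu_2 - f_3\circ\mu_2 = 0$. Thus $\Img g \subseteq \Ker\varphi_2 = \Img\varphi_1$, and since $\varphi_1$ is a monomorphism there is a unique homomorphism $s_2\colon E'_2\to K$ with $\varphi_1\circ s_2 = g$; rearranging gives $f_2 = s_3\circ\mu_2 + \varphi_1\circ s_2$, which is the second desired identity.

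Next I would verify $s_2\circ\mu_1 = f_1$. Precomposing $\varphi_1\circ s_2 = g$ with $\mu_1$ yields $\varphi_1\circ s_2\circ\mu_1 = f_2\circ\mu_1 - s_3\circ\mu_2\circ\mu_1$. Now $\mu_2\circ\mu_1 = 0$ by assumption, and commutativity of the left-hand square gives $f_2\circ\mu_1 = \varphi_1\circ f_1$, so $\varphi_1\circ s_2\circ\mu_1 = \varphi_1\circ f_1$. Cancelling the monomorphism $\varphi_1$ on the left gives $s_2\circ\mu_1 = f_1$, which completes the argument.

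I do not expect a genuine obstacle here; the only point requiring care is that every cancellation must be carried out through $\varphi_1$ on the left, since the hypotheses give neither injectivity nor surjectivity of $\varphi_2$, and that $s_2$ has to be obtained from the containment $\Img g\subseteq\Img\varphi_1$ rather than from any lifting property of the $E'_i$. It is worth noting that $E'_1$ enters only through the relation $\varphi_1\circ f_1 = f_2\circ\mu_1$, and that $\mu_1$ need not be monic for the conclusion to hold.
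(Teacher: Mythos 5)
Your argument is correct and is essentially the paper's own proof: both define $g = f_2 - s_3\circ\mu_2$, check $\varphi_2\circ g = 0$, lift through $\varphi_1 = \ker\varphi_2$ to get $s_2$, and then cancel the monomorphism $\varphi_1$ to obtain $s_2\circ\mu_1 = f_1$. You simply write out the verification of $\varphi_2\circ g = 0$ a bit more explicitly than the paper does.
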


\begin{Proof} Since $\varphi _2(f_2-s_3\mu _2)=0$ and $\varphi _1\colon K\to E$ is the kernel of $\varphi _2$, there exists $s_2\colon E'_2\to K$ such that $f_2-s_3\mu _2=\varphi _1\circ s_2$.  This implies that $\varphi _1\circ f_1=f_2\circ \mu _1=\varphi _1\circ s_2\circ \mu _1$. Since $\varphi _1$ is injective, we deduce that $f _1= s_2\circ \mu _1$.
\end{Proof}

We are going to deal with classes of modules that are both pre-covering and pre-enveloping in the sense of our next definition.

\begin{Def} \label{def:pre} Let $\mathcal C$ be a class of modules. A morphism $f \in \Hom _R(C,M)$ with $C \in \mathcal C$ is a \emph{$\mathcal C$-precover} of the module $M$ provided that for each 
morhism $f^\prime \in \Hom _R(C^\prime,M)$ with $C^\prime \in \mathcal C$ there is a $g \in \Hom_R({C^\prime},C)$ such that $f^\prime = fg$. The class $\mathcal C$ is called \emph{precovering} provided that each module $M$ possesses a $\mathcal C$-precover. 

The $\mathcal C$-precover $f$ is called \emph{special} in case $f$ is surjective and Ker$(f) \in \mbox{Ker} \Ext _R^1(\mathcal C,-)$. A $\mathcal C$-precover is called a $\mathcal C$-cover provided that $g$ is an automorphism of $C$ whenever $g \in \Hom_R(C,C)$ is such that $f = fg$. 

The notions of a \emph{(special) $\mathcal C$-preenvelope}, \emph{$\mathcal C$-envelope} and an \emph{enveloping class} are defined dually.   
\end{Def}

Note that if a $\mathcal C$-precover of a module $M$ is surjective, then so are all $\mathcal C$-precovers of $M$, and dually for the injectivity of $\mathcal C$-preenvelopes. 
For basic properties of precovers and preenvelopes, we refer to \cite[Chapters 5 and 6]{EJ11}, or \cite[Chapter 5]{GT12}. 

\begin{Lemma} \label{lem:monoonto}
Let $\C$ be a class of modules closed under direct sums and direct products which is preenveloping and precovering.  Then the following statements are equivalent:
\begin{itemize}
\item[(i)] Each $\C$-preenvelope of any projective module is an injective morphism.
\item[(ii)] Each $\C$-preenvelope of $R$ is an injective morphism.
\item[(iii)] Each $\C$-precover of any injective module is a surjective morphism.
\item[(iv)] Each $\C$-precover of any injective cogenerator is given by a surjective morphism.
\end{itemize}
\end{Lemma}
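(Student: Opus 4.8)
The plan is to prove the equivalences by running the cycle (i) $\Rightarrow$ (ii) $\Rightarrow$ (iii) $\Rightarrow$ (iv) $\Rightarrow$ (i). Of these, (i) $\Rightarrow$ (ii) is immediate since $R$ is projective, and (iii) $\Rightarrow$ (iv) is immediate since an injective cogenerator is in particular injective. All the content is therefore concentrated in the two ``crossing'' implications (ii) $\Rightarrow$ (iii) and (iv) $\Rightarrow$ (i), which trade the monomorphism property of $\C$-preenvelopes for the epimorphism property of $\C$-precovers. Both rely on the simple fact (compare the remark after Definition \ref{def:pre}): a $\C$-precover of a module $M$ is an epimorphism as soon as \emph{some} morphism onto $M$ from an object of $\C$ is an epimorphism, and dually for monic preenvelopes; this is immediate from the defining factorization property. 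Beyond this, only elementary diagram chasing is involved.

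To prove (ii) $\Rightarrow$ (iii), I would start from a $\C$-preenvelope $e\colon R\to C_0$, which is monic by (ii). Using that $\C$ is closed under direct sums, I first verify that for every cardinal $\kappa$ the direct sum $e^{(\kappa)}\colon R^{(\kappa)}\to C_0^{(\kappa)}$ of $\kappa$ copies of $e$ is again a $\C$-preenvelope: a morphism $R^{(\kappa)}\to C'$ with $C'\in\C$ is factored through $e$ on each coordinate, and the coordinate factorizations are glued into a single morphism $C_0^{(\kappa)}\to C'$ by the universal property of the coproduct; moreover $e^{(\kappa)}$ is still monic. Now, given an injective module $E$, I fix an epimorphism $g\colon R^{(\kappa)}\to E$ with $R^{(\kappa)}$ free; since $E$ is injective and $e^{(\kappa)}$ is monic, $g$ extends to $\bar g\colon C_0^{(\kappa)}\to E$ with $\bar g\,e^{(\kappa)}=g$, so $\bar g$ is an epimorphism. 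As $C_0^{(\kappa)}\in\C$, the image of any $\C$-precover of $E$ contains $\Img\bar g=E$, and hence every $\C$-precover of $E$ is surjective.

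To prove (iv) $\Rightarrow$ (i), I would take a projective module $P$ with a $\C$-preenvelope $e\colon P\to C'$, and fix a monomorphism $j\colon P\hookrightarrow W$ into an injective cogenerator $W$ — for instance $W=E(P)\oplus W_0$, where $W_0=\bigoplus_{\m\in\mSpec R}E(R/\m)$; this $W$ is injective since $R$ is noetherian and a cogenerator since $W_0$ is a direct summand of it. Let $p\colon C\to W$ be a $\C$-precover; by (iv) it is an epimorphism. Because $P$ is projective and $p$ is surjective, $j$ lifts through $p$ to a morphism $\tilde j\colon P\to C$ with $p\,\tilde j=j$, and $\tilde j$ is monic since $j$ is. Finally, $C\in\C$, so $\tilde j$ factors through the $\C$-preenvelope $e$, say $\tilde j=l\,e$ with $l\colon C'\to C$; as $\tilde j$ is monic, $e$ is monic, which is (i).

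I do not anticipate a genuine difficulty here; the only delicate points are organizational. Throughout, one must use exclusively the universal factorization property of the precovers/preenvelopes together with the membership of the relevant source or target in $\C$, and never a particular choice of morphism — this is what lets one pass freely between ``the'' precover/preenvelope and an arbitrary morphism with source (resp.\ target) in $\C$. The one place that requires care is checking that $e^{(\kappa)}$ really is a $\C$-preenvelope, which is also the single place where closure of $\C$ under direct sums is used (closure under direct products does not seem to be needed for this argument). The noetherian hypothesis enters only to guarantee that $E(P)\oplus W_0$ is injective; commutativity plays no role in this lemma.
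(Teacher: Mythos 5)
Your proof is correct and follows essentially the same ideas as the paper: the two nontrivial "crossing" implications are proved by surjecting a free module onto the injective module (extending along an injective preenvelope by injectivity of $E$), and by embedding the projective module into an injective cogenerator (lifting along a surjective precover by projectivity of $P$). The only difference is organizational: the paper proves $(i)\Leftrightarrow(ii)$ and $(iii)\Leftrightarrow(iv)$ separately by a duality, and then connects them via $(i)\Rightarrow(iii)$ and its dual $(iii)\Rightarrow(ii)$, whereas you run the full cycle $(i)\Rightarrow(ii)\Rightarrow(iii)\Rightarrow(iv)\Rightarrow(i)$. One small payoff of your organization, which you noticed, is that it never needs closure of $\C$ under direct products (the paper's $(iv)\Rightarrow(iii)$ step, being dual to $(ii)\Rightarrow(i)$, does use it); your cycle only invokes closure under direct sums, in the lemma that $e^{(\kappa)}$ is again a $\C$-preenvelope. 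Your choice $W=E(P)\oplus W_0$ uses the standing noetherian hypothesis; if one wanted the lemma ring-free one could instead take $W=E\bigl(P\oplus\bigoplus_{\m}R/\m\bigr)$, but in the paper's context the noetherian assumption is in force and your choice is perfectly fine.
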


\begin{Proof} It is clear that $(i)\Rightarrow (ii)$. For the converse, consider an (injective) $\C$-preenvelope $\mu \colon R\to C$. Let $P$ be a projective module, so that $P$ is a direct summand of $R^{(I)}$ for some $I$. Let $\varepsilon \colon P\to R^{(I)}$ denote the inclusion. Since $\C$ is closed under direct sums, $\mu ^{(I)}\colon R^{(I)}\to C^{(I)}$ is a $\C$-preenvelope.  Then $\mu ^{(I)}\circ \varepsilon$ is an injective $\C$-preenvelope of $P$.

Dually, $(iii)$ and $(iv)$ are equivalent.

$(i)\Rightarrow (iii)$. Let $E$ be an injective module, and let $\Phi \colon C\to E$ be a $\C$-precover. Since $R$ is a generator, there exists a set $I$ and a surjective module homomorphism, $g\colon R^{(I)}\to E$. Let $\mu \colon R^{(I)}\to C'$ be a $\C$-preenvelope. Since, by our hypotesis, $\mu$ is injective, the injectivity of the module $E$ implies that there exists $f\colon C'\to E$ such that $f\circ \mu= g$. Since $g$ is surjective, so is $f$. Since $\Phi$ is a $\C$-precover, there exists $s\colon C'\to C$ such that $\Phi \circ s=f$. Since $f$ is surjective, so is $\Phi$.

Dually, $(iii)$ implies $(ii)$. 
\end{Proof}

\section{Generalization closed subsets of the Zariski spectrum}
\label{sec:gen-cl}

\begin{Def} \label{def:gen-cl}
A subset $Y$ of $\Spec R$ is said to be \emph{generalization closed} if $\q \in Y$ implies $\p\in Y$ for all $\p\subseteq\q \in \Spec R$.

In this case, we let $\I (Y)=\mathrm{Add} \, (\bigoplus _{\p\in Y}E(R/\p))$
\end{Def}

\begin{Prop} \label{prop:properties}
Let $Y \subseteq \Spec R$ be generalization closed. 
\begin{itemize}
\item[(1)] $\I (Y)$ coincides with the class of all injective modules whose associated primes are contained in $Y$.
\item[(2)] $\I (Y)$ is a definable class closed under extensions. In particular, $\I (Y)$ is closed under pure quotients. 
\item[(3)] $\I (Y)$ is both a covering and an enveloping class.
\end{itemize}
\end{Prop}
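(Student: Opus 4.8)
The plan is to prove the three assertions in order, leaning on the structure theory of injective modules over a commutative noetherian ring (Matlis theory) and on the characterization of definable classes.

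For (1), I would use the Matlis decomposition: every injective module $E$ decomposes as $E \cong \bigoplus_{\p \in \Spec R} E(R/\p)^{(\kappa_\p)}$ for suitable cardinals $\kappa_\p$, and $\Ass(E) = \{\p \mid \kappa_\p \neq 0\}$. If $\Ass(E) \subseteq Y$ then $E$ is visibly a direct summand of a direct sum of copies of $\bigoplus_{\p \in Y} E(R/\p)$, hence $E \in \I(Y)$. Conversely, $\I(Y)$ consists of direct summands of direct sums of the $E(R/\p)$ with $\p \in Y$; since $\Ass$ commutes with arbitrary direct sums and passes to direct summands, and since $\Ass(E(R/\p)) = \{\p\}$, every module in $\I(Y)$ is injective with associated primes in $Y$.

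For (2), definability amounts to closure under direct products, direct limits, and pure submodules. Closure under products and direct sums follows because $\Ass$ of a product or sum is contained in the union of the $\Ass$'s of the factors (here the noetherian hypothesis is what guarantees $\Ass(\prod M_i) \subseteq \bigcup \Ass(M_i)$, and that products of injectives are injective). Closure under direct limits and pure submodules follows from Lemma \ref{lem:basic}(i),(ii): a pure submodule of an injective is injective, and its associated primes are among those of the ambient module, so they stay in $Y$; similarly a direct limit of modules in $\I(Y)$ is a direct summand of their direct sum, which lies in $\I(Y)$ by part (1). For closure under extensions, given $0 \to A \to B \to C \to 0$ with $A, C \in \I(Y)$, the module $A$ is injective so the sequence splits, $B \cong A \oplus C$, and $\Ass(B) = \Ass(A) \cup \Ass(C) \subseteq Y$; thus $B \in \I(Y)$ by part (1). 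Closure under pure quotients then follows from the general fact that a definable class closed under extensions is closed under pure quotients (alternatively: a pure quotient of an injective is a direct summand by Lemma \ref{lem:basic}(i), and direct summands of modules in $\I(Y)$ stay in $\I(Y)$, while associated primes only shrink).

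For (3), I would invoke the general theory of definable classes: every definable class is preenveloping, and a definable class that is also closed under direct limits and pure epimorphic images is precovering — and here, because the class consists entirely of injective (hence pure-injective) modules and is closed under arbitrary direct sums and products, one gets genuine covers and envelopes, not merely precovers. Concretely, closure under products gives preenvelopes, closure under direct limits and direct sums gives precovers, and the pure-injectivity of the objects upgrades these to envelopes and covers. The main obstacle I anticipate is assembling the correct form of the "definable $\Rightarrow$ (pre)covering/(pre)enveloping" statement and checking that the closure properties established in (2) are exactly the ones needed to get \emph{covers} and \emph{envelopes} rather than just the precovering/preenveloping versions; the rest is a routine application of Matlis theory together with Lemma \ref{lem:basic}.
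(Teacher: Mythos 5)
Your outline of (1) is fine, and (3) is broadly in the spirit of the paper (which cites that definable classes are preenveloping, that classes of injectives closed under summands are enveloping, and that a precovering class closed under direct limits is covering via \cite[Corollary 5.2.7]{EJ11}). The serious problem is in (2), in the step handling direct products.

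You assert that over a noetherian ring $\Ass(\prod_i M_i) \subseteq \bigcup_i \Ass(M_i)$. This is \emph{false}, even over noetherian rings: the noetherian hypothesis gives that identity for direct \emph{sums}, not products. A concrete counterexample: take $R = \Z$ and $M_p = \Z/p\Z$ for all primes $p$. Then $\Ass(M_p) = \{(p)\}$, but the element $(1,1,1,\dots) \in \prod_p M_p$ has annihilator $\bigcap_p (p) = 0$, so $0 \in \Ass(\prod_p M_p)$ while $0 \notin \bigcup_p \Ass(M_p)$. With this, your argument proves closure under products without ever using that $Y$ is generalization closed — which should have been a warning sign, since the claim genuinely depends on that hypothesis. (Indeed, in the $\Z$ example, if $Y$ is the set of all nonzero primes — which is not generalization closed — then each $M_p$ would be covered by $E(\Z/p\Z) \in \I(Y)$, but $0 \in \Ass(\prod_p E(\Z/p\Z))$ while $E(\Z) = \Q \notin \I(Y)$.) The correct statement, and the one the paper proves, is weaker: if $\p \in \Ass(\prod_i I_i)$ is witnessed by $\p = \bigcap_i \mathrm{ann}_R(m_i)$, then choosing any $i$ with $m_i \neq 0$ gives $\p \subseteq \mathrm{ann}_R(m_i)$, and $\Ass(m_i R) \subseteq \Ass(I_i) \subseteq Y$ supplies some $\q \in Y$ with $\p \subseteq \q$; now $\p \in Y$ follows \emph{because $Y$ is generalization closed}. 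You should replace your product argument with this one. The rest of (2) — reducing definability to product-closure via Lemma \ref{lem:basic}, and the extension and pure-quotient closure — is sound.
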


\begin{Proof} $(1)$. This holds because each injective module is isomorphic to a direct sum of copies of the indecomposable injective modules $E(R/\p)$ for $\p \in \Spec R$, and $\Ass {E(R/\p)} = \{ \p \}$ for each $\p \in \Spec R$.

$(2)$ In view of Lemma~\ref{lem:basic}, in order to prove that $\I (Y)$ is definable, we only have to show that $\I (Y)$ is closed under direct products. Let
$\{I_i\}_{i\in \Lambda}$ be a family of modules in $\I (Y)$. If
$\p\in \Ass {\prod _{i\in \Lambda}I_i}$, then
$\p=\mathrm{ann}_R((m_i)_{i\in \Lambda})=\bigcap _{i\in
\Lambda}\mathrm{ann}_R(m_i)$ for some $(m_i) \in \prod_i I_i$. Consider $i\in \Lambda$ such that $m_i\neq
0$. Since $I_i\in \I (Y)$, $\Ass{m_iR}\subseteq \Ass{I_i} \subseteq Y$.
Therefore, there exists $\q\in Y$ such that $\p\subseteq \q$. Since
$Y$ is generalization closed, we deduce that $\p\in Y$.

Finally, every definable class is closed under pure quotients by~\cite[Theorem 3.4.8]{Prest09}. 

$(3)$. Definable classes are always preenveloping \cite[Proposition
2.8, Theorem 3.3]{RS98}, and since $\I (Y)$ is a class of injective modules
closed under direct summands, it is also enveloping \cite[Proposition 5.11]{GT12}.

By part $(1)$, $\I (Y)$ is the class of all modules isomorphic to direct sums of copies of the indecomposable injective modules $E(R/\p)$ for $\p \in Y$. 
This class is clearly precovering, and since it is closed under direct limits, it is even covering by \cite[Corollary 5.2.7]{EJ11}.
\end{Proof}

\begin{Cor} \label{cor:onto}
Let $Y \subseteq \Spec R$ be generalization closed. The the following statements are equivalent:
\begin{itemize}
\item[(i)]  $\mathrm{Ass}(R)\subseteq Y$;
\item[(ii)] each $\I (Y)$-preenvelope of any projective module is an injective map;
\item[(iii)] each $\I (Y)$-precover of any injective module is surjective.
\end{itemize}
\end{Cor}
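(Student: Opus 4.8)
The statement connects a condition on associated primes, $\mathrm{Ass}(R) \subseteq Y$, with the surjectivity/injectivity behaviour of the $\I(Y)$-(pre)covers and (pre)envelopes, so the natural strategy is to invoke Lemma \ref{lem:monoonto}, which already establishes the equivalence of the four conditions (i)--(iv) there for any class closed under direct sums and products that is both preenveloping and precovering. By Proposition \ref{prop:properties}(2)--(3), $\I(Y)$ is a definable (hence product- and sum-closed) class which is both enveloping and covering, so Lemma \ref{lem:monoonto} applies verbatim. Thus conditions (ii) and (iii) of the Corollary are already known to be equivalent to each other, and to ``each $\I(Y)$-preenvelope of $R$ is injective''. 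It remains to link these to the algebraic condition $\mathrm{Ass}(R) \subseteq Y$.

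The plan is therefore to prove ``(i) $\Leftrightarrow$ (ii)'' by working directly with a $\I(Y)$-preenvelope $\mu\colon R \to I$ of $R$. For the direction (i) $\Rightarrow$ (ii): compose the canonical embedding $R \hookrightarrow E(R)$ with the fact that $\mathrm{Ass}(R) \subseteq Y$ forces $E(R) \in \I(Y)$ (by Proposition \ref{prop:properties}(1), since $\mathrm{Ass}(E(R)) = \mathrm{Ass}(R)$). So $R$ embeds into a module of $\I(Y)$; by the defining property of a preenvelope $\mu$, that embedding factors through $\mu$, which forces $\mu$ to be injective. Then invoke the (i)$\Rightarrow$(ii)-style step of Lemma \ref{lem:monoonto} to pass from $R$ to arbitrary projectives. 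For (ii) $\Rightarrow$ (i): if $\mu\colon R \to I$ is an injective $\I(Y)$-preenvelope with $I \in \I(Y)$, then $\mathrm{Ass}(R) \subseteq \mathrm{Ass}(I) \subseteq Y$, the last inclusion by Proposition \ref{prop:properties}(1). Finally, (iii) is already equivalent to (ii) via Lemma \ref{lem:monoonto}, so the cycle is complete.

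The only genuinely substantive point is checking that $R$ actually possesses a $\I(Y)$-preenvelope and that it is legitimate to test injectivity on $R$ alone rather than on all projectives — but both are handled: the former by Proposition \ref{prop:properties}(3), and the latter is exactly the content of the implication $(ii) \Rightarrow (i)$ in Lemma \ref{lem:monoonto} (here ``$(ii)$'' refers to its internal numbering, the preenvelope-of-$R$ condition). I expect no real obstacle; the argument is essentially an application of the abstract lemma together with the identification in Proposition \ref{prop:properties}(1) of $\I(Y)$ as the injectives with associated primes in $Y$. The one place to be careful is keeping the two different ``(ii)''s straight — the Corollary's (ii) about all projectives versus Lemma \ref{lem:monoonto}'s (ii) about $R$ — and making sure the chain $\mathrm{Ass}(R) \subseteq \mathrm{Ass}(\text{preenvelope target}) \subseteq Y$ is used in the right direction on each side of the equivalence.
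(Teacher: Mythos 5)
Your proposal is correct and follows essentially the same route as the paper: both reduce via Lemma~\ref{lem:monoonto} (made applicable by Proposition~\ref{prop:properties}) to the statement that an $\I(Y)$-preenvelope of $R$ is injective iff $\Ass R \subseteq Y$, and both settle that by the observation $\Ass R = \Ass{E(R)}$ combined with the characterization of $\I(Y)$ in Proposition~\ref{prop:properties}(1).
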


\begin{Proof} In view of Proposition~\ref{prop:properties}(3) and Lemma~\ref{lem:monoonto}, we only need to show that a $\I (Y)$-preenvelope of $R$ is injective, if and only if $(i)$ holds. 
But this is clear, since $\mathrm{Ass} (R) = \mathrm{Ass} (E(R))$, and $R$ can be embedded in a module from $\I (Y)$ if and only if $E(R)\in \I (Y)$.
\end{Proof}

\begin{Constr} \label{constr:cotilt-first}
Let $$Y _0\subseteq Y _1 \subseteq \cdots \subseteq Y _i \subseteq \cdots $$ be a fixed sequence of generalization closed subsets of $\Spec R$.

Let $i\ge 0$. For each injective $R$-module $E$, we can construct a complex 
\begin{equation} \label{eqn:coresolutione}                                                                                               
0\lora C\lora E_0\stackrel{\varphi _0}{\lora}E_1 \lora \cdots \lora E_{i-1}\stackrel{\varphi _{i-1}}{\lora} E_i\stackrel{\varphi _{i}}\lora E \lora 0
\end{equation}
with the following properties: $C=\mathrm{Ker} \, \varphi _0$, $\varphi_i$ is an $\I (Y _i)$-precover of $E$, and for each $j<i$ 
there is a factorization of $\varphi_j$
\[
\xymatrix{
E_j\ar[rr]^{\varphi _j}\ar[dr]_{\Phi _j}  & & E_{j+1} \\
& K_{j+1}\ar[ur]_{\nu_{j+1}}\\
}
\]
such that $K_{j+1}\to E_{j+1}$ is the kernel of $\varphi _{j+1}$ and $\Phi _j$ is an $\I (Y_j)$-precover of $K_{j+1}$.
\end{Constr}

In the notation of Construction~\ref{constr:cotilt-first}, we have the following crucial result.

\begin{Th} \label{prop:exact} Assume that $\mathrm{Ass}\, (\mho ^i R) \subseteq Y _i$ for each $i\ge 0$. Then the complex (\ref{eqn:coresolutione}) is exact.
\end{Th}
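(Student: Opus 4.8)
The plan is to prove exactness of (\ref{eqn:coresolutione}) by descending induction on the position in the complex, starting from the right end where $\varphi_i$ is surjective (because $\mathrm{Ass}(R)=\mathrm{Ass}(\mho^0 R)\subseteq Y_i$ forces, via Corollary~\ref{cor:onto}, every $\I(Y_i)$-precover of the injective module $E$ to be surjective), and working leftward. For the step from position $j+1$ to position $j$, one uses the factorization $\varphi_j=\nu_{j+1}\Phi_j$ with $\nu_{j+1}\colon K_{j+1}\to E_{j+1}$ the kernel of $\varphi_{j+1}$: exactness at $E_{j+1}$ amounts to showing $\Phi_j$ is surjective onto $K_{j+1}$, and exactness at $E_j$ (equivalently, at $C=\Ker\varphi_0$ when $j=0$) amounts to identifying $\Ker\varphi_j$ correctly. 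So the core of the argument is to show that each precover $\Phi_j$ of $K_{j+1}$ is a surjective map.

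First I would analyze the modules $K_{j+1}$. Since $\varphi_{j+1}$ is an $\I(Y_{j+1})$-precover, $K_{j+1}=\Ker\varphi_{j+1}$ sits inside the injective module $E_{j+1}\in\I(Y_{j+1})$; by Proposition~\ref{prop:properties}(1), $\Ass(E_{j+1})\subseteq Y_{j+1}$, hence $\Ass(K_{j+1})\subseteq Y_{j+1}$ as well. To apply Corollary~\ref{cor:onto} to the class $\I(Y_j)$ and the module $K_{j+1}$, I need $\mathrm{Ass}(R)\subseteq Y_j$ — which again holds by the hypothesis $\mathrm{Ass}(\mho^0 R)\subseteq Y_j$ together with $Y_0\subseteq Y_j$. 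But Corollary~\ref{cor:onto}(iii) only guarantees surjectivity of $\I(Y_j)$-precovers of \emph{injective} modules, and $K_{j+1}$ need not be injective a priori. The way out is to establish inductively that $K_{j+1}$ \emph{is} injective: indeed, once we know (by the inductive step already carried out to the right of position $j+1$) that the truncated complex $0\to K_{j+1}\to E_{j+1}\to\cdots\to E_i\to E\to 0$ is exact, $K_{j+1}$ is the kernel of a surjection between injectives... but that is not enough. Instead I would use the cosyzygy hypothesis more carefully: show that $K_{j+1}$ is injective by an argument involving $\mho^{i-j}R$, or alternatively prove directly that the relevant precover of $K_{j+1}$ is surjective by embedding $K_{j+1}$ into its injective hull $E(K_{j+1})$, noting $\Ass(E(K_{j+1}))=\Ass(K_{j+1})\subseteq Y_{j+1}$, and reducing to the injective case via a diagram chase using the Homotopy Lemma~\ref{lem:homotopy lemma}.

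The cleanest route, which I would pursue, is a simultaneous descending induction proving two statements at each stage $j$: (a) the complex is exact from position $j$ onward, and (b) $K_{j+1}$ (resp. $C$ at the end) is injective with associated primes controlled so that Corollary~\ref{cor:onto} applies at the next step. Establishing (b) is where the hypothesis $\mathrm{Ass}(\mho^i R)\subseteq Y_i$ — not merely $\mathrm{Ass}(R)\subseteq Y_i$ — does real work: once exactness to the right is known, $K_{j+1}$ becomes (up to the injectives $E_{j+2},\dots,E$) a cosyzygy-type module whose associated primes are pinned down by $\mho^{i-j-1}R$ via a standard dimension-shift/horseshoe comparison of the complex (\ref{eqn:coresolutione}) with the minimal injective coresolution of $R$. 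I expect the main obstacle to be exactly this: controlling $\Ass(K_{j+1})$ and the injectivity of the intermediate kernels well enough to feed Corollary~\ref{cor:onto} at each step, since the precovers $\Phi_j$ are with respect to $\I(Y_j)$ while the kernel $K_{j+1}$ naturally lives over $Y_{j+1}\supseteq Y_j$; bridging this gap is precisely what the nested hypothesis on all cosyzygies is designed to permit. The Homotopy Lemma will be the workhorse for transporting surjectivity of a precover of an injective hull down to surjectivity of the precover of $K_{j+1}$ itself.
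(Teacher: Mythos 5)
Your proposal correctly identifies the starting point (Corollary~\ref{cor:onto} gives surjectivity of $\varphi_i$), correctly singles out the Homotopy Lemma as the workhorse, and correctly senses that the hypothesis on all $\mho^j R$ (not just $\mho^0 R$) must play an essential role. But the route you call ``cleanest'' -- a simultaneous descending induction establishing both exactness and the injectivity of the kernels $K_{j+1}$ -- is a dead end, because the intermediate kernels are \emph{not} injective. Indeed, if the complex (\ref{eqn:coresolutione}) is exact, then $K_{j+1}$ is a cosyzygy of $C$, and in the cases of interest (e.g.\ Construction~\ref{constr:cotilt} with $E=E(R/\p)$, $\p\in Y_{i+1}\setminus Y_i$) $C$ has injective dimension exactly $i+1$ by Lemma~\ref{lem:associatedok}, so $K_{j+1}$ has injective dimension $i-j>0$ for $j<i$. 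Already $K_i=\Ker\varphi_i$ is injective only if $\varphi_i$ splits, i.e.\ only if $E\in\I(Y_i)$, which is precisely what fails in the nontrivial cases. So Corollary~\ref{cor:onto}(iii) cannot be applied to $K_{j+1}$ directly, and statement (b) of your induction is false.

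You are also slightly off about \emph{where} the stronger hypothesis $\Ass(\mho^j R)\subseteq Y_j$ enters. It is not used to control $\Ass(K_{j+1})$ (that is bounded by $Y_{j+1}$ anyway, since $K_{j+1}\subseteq E_{j+1}\in\I(Y_{j+1})$). The paper's actual argument inducts on the \emph{length} $i$ of the complex rather than descending on position: the truncated complex $0\to K_1\to E_1\to\cdots\to E_i\to E\to 0$ is itself an instance of (\ref{eqn:coresolutione}) of shorter length with respect to the shifted sequence $Y_1\subseteq Y_2\subseteq\cdots$, so by induction it is already exact, and only the surjectivity of $\Phi_0\colon E_0\to K_1$ remains. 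To obtain it, one picks a free module $F$ with a surjection $f\colon F\to K_1$, builds a chain map from the minimal injective coresolution of $F$ into the bottom complex via the Comparison Theorem, and then constructs a contracting homotopy working from right to left: each stage uses that $E_j(F)\in\I(Y_j)$ (this is exactly where $\Ass(\mho^j F)=\Ass(\mho^j R)\subseteq Y_j$ is needed, since $F$ is free) so the map lands in the domain of the relevant $\I(Y_j)$-precover, and Lemma~\ref{lem:homotopy lemma} then produces the next homotopy map. The final homotopy map $s'\colon E_0(F)\to K_1$ satisfies $s'\mu=f$, hence is surjective, and since $E_0(F)\in\I(Y_0)$ it factors through the precover $\Phi_0$, which must therefore be surjective. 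In short: the key idea you are missing is to transport the problem through the minimal injective coresolution of a free module $F$, not through $E(K_{j+1})$, and to use the cosyzygy hypothesis to certify that $E_j(F)$ lies in $\I(Y_j)$ at every stage of the homotopy construction.
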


\begin{Proof}
We fix an injective module $E$ and prove the statement by induction on $i$. If $i=0$, then $\varphi _0$ is surjective by Corollary~\ref{cor:onto}. 

Assume $i>0$. The inductive hypothesis tells us that $\Phi_1,\dots ,\Phi _{i-1}, \varphi_i$ are surjective, so it remains to prove that $\Phi_0$ is surjective. 
Let $F$ be a free module such that there exists an epimorphism $f\colon F\to K_1$. Then we have the commutative diagram given by the solid arrows
\[\xymatrix{
0 \ar[r]&F\ar[r]^{\mu} \ar[d]_{f} &E_0(F)\ar[r]^{\mu _0} \ar[d]_{f_0} \ar@{.>}[ld]_{s'}&E_1(F) \ar[r] \ar[d]_{f_1}\ar@{.>}[ld]_{s_1}&\cdots \ar[r] & E_{i-1}(F)\ar[r]^{\mu _{i-1}} \ar[d]_{f_{i-1}}&E_{i}(F)\ar[d]_{f_{i}}\ar@{.>}[ld]_{s_{i}}\\  
0\ar[r]& K_1 \ar[r] & E_{1} \ar[r]^{\varphi_1} &E_{2} \ar[r]&\cdots\ar[r] & E_{i} \ar[r]^{\varphi _{i}}& E\ar[r]&0  }\]
where the upper complex is part of a minimal injective coresolution of $F$  and the maps $f_0,\dots ,f_i$  are given by the Comparison Theorem, which we apply using the injectivity of the corresponding terms of the bottom row and the exactness of the upper row. In particular, by induction on $j < i$, we obtain the commutative diagrams 
\[\xymatrix{
0 \ar[r]&{\mho ^{j}F}\ar[r] \ar[d]_{\bar f_j} &{E_j(F)}\ar[r] \ar[d]_{f_j} &{\mho ^{j+1}F} \ar[r] \ar[d]_{\bar f_{j+1}} &0\\  
0 \ar[r]& K_{j+1} \ar[r]^{\nu_{j+1}}  &E_{j+1} \ar[r]^{\Phi_{j+1}} &E_{j+2} \ar[r]&\cdots\ar[r] & E_{i} \ar[r]^{\varphi _{i}}& E\ar[r]&0  }\]
where $\bar f_0 = f$. 

By downward induction on $j \leq i$, we will construct the dotted arrows above; they will give a homotopy between the two complexes.

By the hypothesis on $Y _i$, we have $E_{i}(F)\in \I(Y_i)$. Since $\varphi _i$ is a  $\I (Y_i)$-precover, there exists $s_{i}\colon E_{i}(F)\to E_i$ such that $\varphi_i \circ s_{i}=f_{i}$. By Lemma~\ref{lem:homotopy lemma} there exists $s'_{i-1}\colon E_{i-1}(F)\to K_i$ such that
\[ s'_{i-1}\circ \mu_{i-2}=f'_{i-2} \qquad \textrm{and} \qquad f_{i-1} = s_{i}\circ\mu_{i-1} + \nu_{i}\circ s'_{i-1}, \]
where $f'_{i-2} = \Phi_{i-1} \circ f_{i-2}$. Since $E_{i-1}(F)\in \I (Y _{i-1})$  and $\Phi_{i-1}$ is an $\I (Y _{i-1})$-precover of $K_i$, we deduce that there exists $s_{i-1}\colon E_{i-1}(F)\to E_{i-1}$ such that $\Phi _{i-1} \circ s_{i-1}=s'_{i-1}$. Thus 
\[ f_{i-1} = s_{i}\circ\mu_{i-1} + \varphi_{i-1}\circ s_{i-1}. \]
We also have a commutative diagram of solid arrows
\[\xymatrix{ 
& E _{i-3}(F) \ar[r]^{\mu_{i-3}} \ar[d]_{f'_{i-3}} & E _{i-2}(F)\ar[r]^{\mu _{i-2}}\ar[d]_{f_{i-2}} & E_{i-1}(F) \ar[d]^{s'_{i-1}}\ar@{.>}[ld]_{s_{i-1}} \\
0 \ar[r] & K_{i-1} \ar[r] & E _{i-1}\ar[r]^{\Phi _{i-1}}& K_i  }
\]
where $f'_{i-3} = \Phi_{i-2} \circ f_{i-3}$. Now Lemma~\ref{lem:homotopy lemma} allows us to continue the inductive construction of the homotopy.

In the last stage we get a commutative diagram of solid arrows with exact rows 
\[\xymatrix{0\ar[r]
&F\ar[r]^\mu \ar[d]_{f}&  E _{0}(F)\ar[r]^{\mu _{0}}\ar[d]_{f_0} & E_{1}(F)\ar[d]^{s'_1}\ar@{.>}[ld]_{s_{1}} \\  
0\ar[r] &K_{1} \ar[r] &E _{1}\ar[r]^{\Phi _{1}}&K_{2} }
\]
such that $\Phi_{1}s_1=s'_1$. By Lemma~\ref{lem:homotopy lemma}, there exists $s'\colon E_0(F)\to K_{1}$ with $s'\mu = f$. This finishes the proof of the existence of the homotopy.
  
Finally, we observe that since $f$ is surjective, so is $s'$. Since $E_0(F)\in \I (Y_0)$, there exists $s\colon E_0(F)\to E_0$ such that $s'=\Phi_0 \circ s$. Since $s'$ is surjective, so is $\Phi_0$. This finishes the proof.
\end{Proof}

\section{Constructing the cotilting modules}
\label{sec:constr}

In this section, we consider increasing sequences, $\Y$, of generalization closed subsets of $\Spec R$ 
\[ \Y \, :  \quad Y_{-1} = \emptyset \subseteq Y _0 \subseteq Y _1 \subseteq \cdots \subseteq Y _i \subseteq \cdots \]
such that
\begin{itemize}
\item[(1)] $\bigcup _{i\ge 0} Y_i =\Spec R$, and
\item[(2)] $\Ass{\mho ^i R} \subseteq Y_i$ for each $i\ge 0$.
\end{itemize}

\begin{Notation} \label{nota:cotilt-classes}
For $\Y$ as above, we denote by $\C (\Y)$ the class of all modules $X$ whose minimal injective coresolution is of the form
\[
0\lora X \lora \bigoplus _{\p\in Y_0} E(R/\p)^{(I_{\p,0})} \lora \cdots \lora \bigoplus _{\p\in Y_i} E(R/\p)^{(I_{\p,i})}\lora \cdots
\]
\end{Notation}

In the special case when there is an $n$ such that 
\begin{itemize}
\item[(3)] $Y_{n-1} \subsetneq Y _n=\Spec R$ 
\end{itemize}
we will also use the notation $\C (Y_0, \dots , Y_{n-1})$ for $\C (\Y)$. In particular, $\C (Y_{0},\dots ,Y_{n-1}) = \Mod R$ for $n = 0$.

We recall  the following recent result from \cite{APST12} which is crucial and motivates our work.  

\begin{Th} \label{thm:TAMS} The increasing sequences $\Y$ satisfying (1)--(3) parametrize all $n$-cotilting classes of modules via the assignment $\Y \mapsto \C (\Y)$.
\end{Th}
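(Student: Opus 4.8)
The plan is to reduce the statement to the already-known classification of cotilting classes (via their cofinite type, from \cite{APST12}) together with the explicit description of $\C(\Y)$ in terms of the sequence $\Y$, and to verify that the assignment $\Y \mapsto \C(\Y)$ is both well-defined and a bijection onto the set of $n$-cotilting classes. First I would recall from \cite{APST12} that every $n$-cotilting class $\C$ is of cofinite type and is determined by, and determines, an increasing sequence of generalization closed subsets $\emptyset = Y_{-1} \subseteq Y_0 \subseteq \dots$ of $\Spec R$ stabilizing at $\Spec R$ in finitely many steps; the subset $Y_i$ is recovered from $\C$ as the set of associated primes occurring in the $i$th cosyzygy $\mho^i M$ of modules $M \in \C$ (equivalently, as $\{\p \mid \Ext^i_R(R/\p, C) = 0\}^c$ for a cotilting module $C$ inducing $\C$, up to the usual reindexing). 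The grade-consistency/grade-consistent-function condition of \cite{DT}, or equivalently conditions (1)--(3) here together with $\Ass \mho^i R \subseteq Y_i$, is exactly the constraint on such sequences, so the parametrizing set on both sides agrees.

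The substance of the proof is then to match the two descriptions of the class. Given $\Y$ satisfying (1)--(3), I would show $\C(\Y)$ as defined in Notation \ref{nota:cotilt-classes} coincides with the $n$-cotilting class associated to $\Y$ in \cite{APST12}. One inclusion is formal: if $X$ has a minimal injective coresolution with $i$th term supported on primes in $Y_i$, then $X$ lies in the cotilting class, because membership in a cofinite-type cotilting class is detected by vanishing of $\Ext^j_R(R/\p, X)$ for $\p \notin Y_j$, and this vanishing follows from $\Ass \mho^j X \subseteq Y_j$ by a standard computation with the minimal injective coresolution (using that $\Ext^j_R(R/\p, X) \hookrightarrow \Hom_R(R/\p, \mho^j X)$ and $\Hom_R(R/\p, E(R/\q)) = 0$ unless $\p \subseteq \q$, combined with generalization-closedness of $Y_j$). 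The reverse inclusion — that every module in the cotilting class has its minimal injective coresolution of the prescribed shape — requires the hypothesis $\Ass \mho^i R \subseteq Y_i$ and the finite-injective-dimension bound, and is where I expect to lean on Lemma \ref{lem:perpgen} and Theorem \ref{prop:exact}: the iterated injective-precover construction of Construction \ref{constr:cotilt-first} produces, for $R$ itself, an exact complex witnessing that the cotilting class genuinely has the claimed form, and one transfers this to arbitrary $X$ in the class by the usual dévissage.

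Finally I would establish injectivity of $\Y \mapsto \C(\Y)$: distinct sequences give distinct classes because $Y_i$ is reconstructed from $\C(\Y)$ as $\bigcup_{X \in \C(\Y)} \Ass \mho^i X$ (or as the complement of the set of $\p$ with $R/\p \in {}^{\perp_{\ge i+1}}C$, $C$ cotilting inducing the class), so the $i$th terms of the coresolutions over all members of the class recover exactly $Y_i$; and surjectivity onto the set of $n$-cotilting classes is precisely the content of \cite{APST12} once the shape of $\C(\Y)$ is identified as above. The main obstacle is the reverse inclusion in the class-identification step: verifying that finiteness of injective dimension forces the coresolution to have the prescribed support at \emph{every} stage, not just up to stabilization, which is exactly why the hypotheses (1)--(3) and the machinery of Section \ref{sec:gen-cl} (Corollary \ref{cor:onto}, Theorem \ref{prop:exact}) were developed. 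In fact, for the purposes of this paper it may suffice to cite \cite{APST12} for Theorem \ref{thm:TAMS} verbatim and treat the present section as setting up notation, so a short proof could simply translate the classification of \cite{APST12} into the language of $\C(\Y)$, with the verification of the dictionary $Y_i \leftrightarrow \Ass \mho^i(-)$ being the only genuine step.
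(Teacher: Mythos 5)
The paper does not prove Theorem~\ref{thm:TAMS}: the sentence immediately preceding it reads ``We recall the following recent result from \cite{APST12} which is crucial and motivates our work,'' and no proof is supplied. Your closing remark --- that one should simply cite \cite{APST12} verbatim and regard the surrounding material as setting up notation --- is in fact exactly what the paper does, and is the correct reading. The substantive body of your proposal, in which you try to reconstruct a proof by matching $\C(\Y)$ from Notation~\ref{nota:cotilt-classes} against ``the cotilting class associated to $\Y$ in \cite{APST12},'' is essentially tautological: the definition of $\C(\Y)$ here is already the description used in \cite{APST12}, so there is no genuine dictionary to verify. Moreover, the machinery you propose to lean on for the ``reverse inclusion'' --- Construction~\ref{constr:cotilt-first}, Theorem~\ref{prop:exact}, Lemma~\ref{lem:perpgen} --- is developed in the paper for a different purpose: to construct, for each sequence $\Y$, an explicit $n$-cotilting module $C(\Y)$ inducing $\C(\Y)$ (Theorems~\ref{thm:perpcomputation} and~\ref{thm:cotilt-constr}), which is precisely the problem \cite{APST12} left open. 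Invoking that machinery to prove the classification itself muddles the logical architecture: the classification is an input here, not a consequence. One minor slip: you list ``conditions (1)--(3) here together with $\Ass{\mho^i R} \subseteq Y_i$'' as if the latter were an additional constraint, but that inclusion \emph{is} condition~(2), so the phrasing is redundant.
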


The problem left open in \cite{APST12} is to construct a cotilting module $C$ such that $\C := \C (\Y)$ is induced by $C$, that is, $\C  = {}^\perp C$. Our main goal here is to solve this problem. 

We start with an instance of Construction~\ref{constr:cotilt-first} for $E = E(R/\p)$:  

\begin{Constr} \label{constr:cotilt}
Let $i\ge 0$, and $\p\in Y_{i+1} \setminus Y_i$. Then we can construct a complex    
\begin{equation} \label{eqn:coresolutioncp}
0\lora C_\p\lora E_0\stackrel{\varphi _0}{\lora}E_1 \lora \cdots \lora E_{i-1}\stackrel{\varphi _{i-1}}{\lora} E_i\stackrel{\varphi _{i}}\lora E(R/\p)\lora 0
\end{equation}
such that $C_\p=\mathrm{Ker} \, \varphi _0$, $\varphi_i$ is an $\I(Y_i)$-cover of $E(R/\p)$, and for each $j<i$ there is a commutative diagram
\[
\xymatrix{
E_j\ar[rr]^{\varphi _j}\ar[dr]_{\Phi _j}  & & E_{j+1} \\
& K_{j+1}\ar[ur]_{\nu_{j+1}}\\
} 
\]
where $\nu_{j+1}$ is the kernel of $\varphi _{j+1}$ and $\Phi _j$ is an $\I (Y_j)$-cover of $K_{j+1}$.
\end{Constr}

By Theorem~\ref{prop:exact} we have

\begin{Prop} \label{prop:exactcotilt}
The complex (\ref{eqn:coresolutioncp}) is exact for all $i\ge 0$ and $\p\in Y_{i+1} \setminus Y_i$, .
\end{Prop}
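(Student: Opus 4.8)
The plan is to recognize Construction~\ref{constr:cotilt} as the special case $E = E(R/\p)$ of Construction~\ref{constr:cotilt-first} and then to apply Theorem~\ref{prop:exact} directly.

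First I would check that Construction~\ref{constr:cotilt} is legitimate, i.e.\ that a complex with the stated properties actually exists. The module $E(R/\p)$ is injective (being an injective envelope), so Construction~\ref{constr:cotilt-first} applies to it. The only formal difference is that here each of the maps $\varphi_i$ and $\Phi_j$ is required to be an $\I(Y_i)$-cover, respectively an $\I(Y_j)$-cover, rather than merely a precover. By Proposition~\ref{prop:properties}(3) each class $\I(Y_j)$ is covering, so these covers do exist; and since every cover is in particular a precover, the complex (\ref{eqn:coresolutioncp}) produced this way is an instance of the complex (\ref{eqn:coresolutione}) of Construction~\ref{constr:cotilt-first}, with $E = E(R/\p)$ and $C = C_\p$, the indices of the two complexes being aligned for the same $i$.

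Next I would verify the hypothesis of Theorem~\ref{prop:exact}, namely that $\Ass{\mho^i R} \subseteq Y_i$ for every $i \ge 0$. This is exactly condition~(2) imposed at the beginning of Section~\ref{sec:constr} on the increasing sequence $\Y$. Hence Theorem~\ref{prop:exact} applies verbatim and yields the exactness of (\ref{eqn:coresolutioncp}).

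There is essentially no obstacle beyond this bookkeeping: the substantive work — the downward-inductive homotopy argument built from the minimal injective coresolution of a free module surjecting onto $K_1$ — has already been carried out in the proof of Theorem~\ref{prop:exact}. The only points worth a remark are that the hypothesis $\p \in Y_{i+1}\setminus Y_i$ plays no role in the exactness assertion itself (exactness holds for any $i$ and any prime $\p$; this restriction merely singles out the primes relevant to the forthcoming construction of the cotilting module), and that passing from precovers to covers is harmless precisely because covers are precovers, so Theorem~\ref{prop:exact} does not need to be reproved in the ``cover'' setting.
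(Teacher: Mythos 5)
Your proof is correct and matches the paper's approach exactly: the paper's entire proof is just an appeal to Theorem~\ref{prop:exact}. Your observations that covers are in particular precovers (so Construction~\ref{constr:cotilt} is a legitimate instance of Construction~\ref{constr:cotilt-first}, with covers existing by Proposition~\ref{prop:properties}(3)) and that condition~(2) of Section~\ref{sec:constr} supplies the hypothesis $\Ass{\mho^i R}\subseteq Y_i$ of Theorem~\ref{prop:exact} are exactly the bookkeeping the paper leaves implicit.
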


For $\p \in Y_0$ we define $C_\p:=E(R/\p)$. Finally, we put

\begin{Notation} \label{nota:C(Y)}
\[ C=C(\Y):= \prod _{\p\in \Spec R} C_\p \]
\end{Notation}

\begin{Lemma} \label{lem:injectivesok} 
Let $\p\in \Spec R$ and $0 \leq i < \omega$ be such that $\p\in Y _{i+1}\setminus Y _i$. 
Then $\Ext_R^{j+1} (E,C_\p)=0$ for all $0 \leq j \leq i$ and $E\in \I (Y _j)$.
\end{Lemma}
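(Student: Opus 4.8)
The plan is to use the exact sequence (\ref{eqn:coresolutioncp}) produced by Construction~\ref{constr:cotilt} together with Proposition~\ref{prop:exactcotilt} as the backbone of a dimension-shifting argument. Fix $\p \in Y_{i+1}\setminus Y_i$, so that we have the exact complex
\[
0\lora C_\p\lora E_0\stackrel{\varphi _0}{\lora}E_1 \lora \cdots \lora E_{i-1}\stackrel{\varphi _{i-1}}{\lora} E_i\stackrel{\varphi _{i}}\lora E(R/\p)\lora 0,
\]
and recall that it factors through the kernels $\nu_{j+1}\colon K_{j+1}\to E_{j+1}$ of $\varphi_{j+1}$, with $\Phi_j\colon E_j\to K_{j+1}$ an $\I(Y_j)$-cover and $E_j\in\I(Y_j)$ injective. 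Splicing, we see $C_\p$ has an injective coresolution whose $j$th term $E_j$ lies in $\I(Y_j)$, and the $j$th cosyzygy of $C_\p$ is exactly $K_{j+1}$ for $1\le j\le i$ (with $K_{i+1}:=E(R/\p)$). So to bound $\Ext^{j+1}_R(E,C_\p)$ for $E\in\I(Y_j)$ and $0\le j\le i$, I would shift up the coresolution: $\Ext^{j+1}_R(E,C_\p)\cong \Ext^1_R(E, \mho^j C_\p)$, and $\mho^j C_\p$ is a quotient of the injective $E_j$ — more precisely there is an exact sequence $0\to \mho^j C_\p \to E_j \to \mho^{j+1}C_\p\to 0$ coming from the complex, provided $j\ge 1$; for $j=0$ one uses instead $0\to C_\p\to E_0\to K_1\to 0$.

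The key point, then, is that $\Ext^1_R(E,\mho^j C_\p)=0$ whenever $E\in\I(Y_j)$. I would get this from the covering property of $\Phi_j$. Indeed, applying $\Hom_R(E,-)$ to the short exact sequence $0\to \mho^j C_\p\to E_j\mapr{\pi_j}\mho^{j+1}C_\p\to 0$ (where, after identifying, $\pi_j$ is essentially $\Phi_j$ followed by the inclusion of $K_{j+1}$, and $\mho^{j+1}C_\p\subseteq E_{j+1}$), the connecting map into $\Ext^1_R(E,\mho^j C_\p)$ has as its cokernel-controlling input the question of whether every map $E\to \mho^{j+1}C_\p$ lifts through $\pi_j$. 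Since $E\in\I(Y_j)$ and $\Phi_j\colon E_j\to K_{j+1}$ is an $\I(Y_j)$-precover of $K_{j+1}$, any map $E\to K_{j+1}$ — in particular any map $E\to \mho^{j+1}C_\p$ composed with the inclusion $\mho^{j+1}C_\p\hookrightarrow E_{j+1}=K_{j+1}$ wait, I should be careful: $\mho^{j+1}C_\p$ is a submodule of $E_{j+1}$, and for $j+1\le i$ it is $K_{j+1}$? No — rather $\mho^{j+1}C_\p = \Img\varphi_j = \nu_{j+1}(K_{j+1})\cong K_{j+1}$, so indeed maps into it are maps into $K_{j+1}$, which lift along the $\I(Y_j)$-precover $\Phi_j$. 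Hence $\Hom_R(E,E_j)\to\Hom_R(E,\mho^{j+1}C_\p)$ is surjective, so $\Ext^1_R(E,\mho^j C_\p)=0$, as required. For $j=0$ the same argument applies with $C_\p$ in place of $\mho^0 C_\p$, using that $\Phi_0$ is an $\I(Y_0)$-cover of $K_1$ and $\mho^1 C_\p\cong K_1$.

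Let me also record the degenerate and boundary cases. When $j=i$: the relevant surjection is $\varphi_i\colon E_i\to E(R/\p)$, which is an $\I(Y_i)$-cover, so the lifting argument for $E\in\I(Y_i)$ gives $\Ext^1_R(E,\mho^i C_\p)=0$ directly from $0\to \mho^i C_\p\to E_i\to E(R/\p)\to 0$, i.e. $\Ext^{i+1}_R(E,C_\p)=0$. When $i=0$: then $\p\in Y_1\setminus Y_0$, $C_\p=\Ker\varphi_0$ in $0\to C_\p\to E_0\mapr{\varphi_0}E(R/\p)\to 0$ with $\varphi_0$ an $\I(Y_0)$-cover, and the claim is just $\Ext^1_R(E,C_\p)=0$ for $E\in\I(Y_0)$, which is the $j=0$ case above. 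I expect the main technical obstacle to be bookkeeping: correctly identifying the cosyzygies $\mho^j C_\p$ with the modules $K_{j+1}$ (resp. with $E(R/\p)$ at the top) so that the precover/cover property of the right map is available at each stage, and handling the endpoints $j=0$ and $j=i$ uniformly. Note we only need the precover (not cover) property here, so Proposition~\ref{prop:exactcotilt} together with Construction~\ref{constr:cotilt-first}'s precover data suffices; the finer cover property from Construction~\ref{constr:cotilt} will matter later for minimality, not for this lemma.
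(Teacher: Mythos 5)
Your proof is correct and takes essentially the same route as the paper: the paper computes $\Ext_R^{j+1}(E,C_\p)$ by applying $\Hom_R(E,-)$ directly to the exact coresolution~(\ref{eqn:coresolutioncp}) and uses the $\I(Y_j)$-precover property of $\Phi_j$ to kill the cohomology in degree $j+1$, which is exactly your dimension-shifting argument packaged slightly differently. Your early statement that ``the $j$th cosyzygy of $C_\p$ is $K_{j+1}$'' is off by one (it is $K_j$, with $\mho^{j+1}C_\p \cong K_{j+1}$), but you correct the identification later and the argument you actually run is sound.
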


\begin{Proof}
Let $E\in \I(Y _j)$. We compute $\Ext_R^{j+1} (E,C_\p)$ by applying the
functor $\Hom_R(E,-)$ to the injective coresolution of $C_\p$ given by (\ref{eqn:coresolutioncp}). Since
$\Phi_j\colon E_j\to K_{j+1}$ is an $\I (Y _j)$-cover, $\Hom_R(E, \Phi _j)$ is onto, whence 
\[ \mathrm{Ker}\, (\Hom_R(E, \varphi_{j+1}))= \Img\Hom_R(E, \varphi_{j}).\]
Therefore, $\Ext_R^{j+1} (E,C_\p)=0$ as claimed.
\end{Proof}

Before proceeding, we recall a simple, but important lemma on morphisms between indecomposable injective modules 
(for a proof, see e.g.\ \cite[3.3.8(5)]{EJ11}): 

\begin{Lemma} \label{lem:hom-injectives}
Let $\p,\q \in \Spec R$. Then $\Hom_R\big(E(R/\p), E(R/\q)\big) \ne 0$ \ifa $\p \subseteq \q$.
\end{Lemma}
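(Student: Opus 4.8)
The plan is to prove the two implications separately, using only elementary properties of injective hulls and associated primes over a commutative noetherian ring. For the implication $\p \subseteq \q \Rightarrow \Hom_R(E(R/\p),E(R/\q)) \neq 0$, I would start from the canonical surjection $\pi\colon R/\p \to R/\q$ afforded by $\p \subseteq \q$; composing it with the embedding $R/\q \hookrightarrow E(R/\q)$ produces a nonzero map $\alpha\colon R/\p \to E(R/\q)$. Since $E(R/\q)$ is injective and $R/\p \subseteq E(R/\p)$, the map $\alpha$ extends to a homomorphism $g\colon E(R/\p) \to E(R/\q)$, and $g|_{R/\p} = \alpha \neq 0$ forces $g \neq 0$.

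For the converse, suppose $f\colon E(R/\p) \to E(R/\q)$ is nonzero and pick $x \in E(R/\p)$ with $f(x) \neq 0$. The key step is that $x$ is annihilated by a power of $\p$: the cyclic submodule $Rx$ of $E(R/\p)$ is nonzero, so $\emptyset \neq \Ass{Rx} \subseteq \Ass{E(R/\p)} = \{\p\}$ (the last equality being the one already invoked in the proof of Proposition~\ref{prop:properties}), whence $\Ass{Rx} = \{\p\}$; as $Rx \cong R/\Ann_R x$ is finitely generated over the noetherian ring $R$, its support $V(\Ann_R x)$ has $\p$ as its unique minimal prime, so $\sqrt{\Ann_R x} = \p$ and hence $\p^m \subseteq \Ann_R x$ for some $m \geq 1$. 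Now $Rf(x)$ is a nonzero submodule of $E(R/\q)$, which is essential over $R/\q$, so there is $r \in R$ with $f(rx) = r f(x)$ a nonzero element of $R/\q \subseteq E(R/\q)$; since $R/\q$ is a domain, $\Ann_R f(rx) = \q$. Combining $\p^m(rx) = 0$ with $\Ann_R(rx) \subseteq \Ann_R f(rx)$ gives $\p^m \subseteq \q$, and primeness of $\q$ yields $\p \subseteq \q$.

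The only point that needs more than a one-line diagram chase is the torsion statement that every element of $E(R/\p)$ is killed by a power of $\p$; I expect this to be the main (mild) obstacle, though it is standard Matlis theory and could equally well be quoted, together with the present lemma, from \cite[Section~3.3]{EJ11}.
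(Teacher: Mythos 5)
The paper does not actually prove this lemma; it simply points to \cite[3.3.8(5)]{EJ11} as a standard fact from Matlis theory. Your argument is a correct, self-contained replacement at the same level of generality. The forward direction is exactly the expected extension argument: $\p\subseteq\q$ gives a surjection $R/\p\twoheadrightarrow R/\q$, whose composite into $E(R/\q)$ extends along $R/\p\hookrightarrow E(R/\p)$ by injectivity. The converse is where the content lies, and you identify the two structural inputs precisely: every element of $E(R/\p)$ is $\p$-power torsion, which you derive from $\Ass{E(R/\p)}=\{\p\}$, noetherianity, and the fact that the minimal primes of the support of a finitely generated module are associated; and $R/\q$ is essential in $E(R/\q)$, which lets you push a nonzero image of $f$ down into $R/\q$ where the annihilator is exactly $\q$. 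The inclusion $\Ann_R(rx)\subseteq\Ann_R f(rx)$ then gives $\p^m\subseteq\q$, and primeness of $\q$ finishes. One small stylistic note: after identifying $\sqrt{\Ann_R x}=\p$, the passage to $\p^m\subseteq\Ann_R x$ uses that $\p$ is finitely generated, which is again noetherianity; it is worth saying explicitly, although it is implicit in your appeal to the noetherian hypothesis. As you observe, the torsion statement could equally be quoted, which would compress the argument to a few lines without changing its substance.
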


\begin{Lemma} \label{lem:associatedok}
Let $\p\in \mathrm{Spec}\,(R)$ and $i\ge 0$ be such that $\p\in Y _{i+1}\setminus Y _i$. 
Let $X$ be a module with $\p\in \mathrm{Ass}\, (X)$. Then $\Ext_R^{i+1}(X,C_\p)\neq 0$.

In particular, the injective dimension of $C_\p$ equals $i+1$.
\end{Lemma}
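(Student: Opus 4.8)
The plan is to reduce to the case $X=R/\p$ and then to read off the non-vanishing of $\Ext_R^{i+1}(-,C_\p)$ directly from the injective coresolution~(\ref{eqn:coresolutioncp}). By Proposition~\ref{prop:exactcotilt}, that complex is an injective coresolution of $C_\p$ of length $i+1$, so the injective dimension of $C_\p$ is at most $i+1$; this gives one half of the ``in particular'' clause and, crucially for what follows, ensures $\Ext_R^{i+2}(-,C_\p)=0$. Since $\p\in\Ass{X}$, there is a monomorphism $R/\p\hookrightarrow X$; denoting its cokernel by $Q$ and applying $\Hom_R(-,C_\p)$ to $0\to R/\p\to X\to Q\to 0$, the resulting long exact sequence together with $\Ext_R^{i+2}(Q,C_\p)=0$ produces an epimorphism $\Ext_R^{i+1}(X,C_\p)\twoheadrightarrow\Ext_R^{i+1}(R/\p,C_\p)$. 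Hence it suffices to prove $\Ext_R^{i+1}(R/\p,C_\p)\ne 0$.

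To this end I would compute $\Ext_R^{i+1}(R/\p,C_\p)$ by applying $\Hom_R(R/\p,-)$ to~(\ref{eqn:coresolutioncp}). As $E(R/\p)$ occupies cohomological degree $i+1$ of this coresolution, the group $\Ext_R^{i+1}(R/\p,C_\p)$ is the cokernel of $\Hom_R(R/\p,\varphi_i)$, viewed as a map from $\Hom_R(R/\p,E_i)$ to $\Hom_R(R/\p,E(R/\p))$. The key claim is that the canonical inclusion $\iota\colon R/\p\hookrightarrow E(R/\p)$ does not lie in the image of $\Hom_R(R/\p,\varphi_i)$, which is precisely the required non-vanishing. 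Indeed, if $\iota=\varphi_i\circ g$ for some $g\in\Hom_R(R/\p,E_i)$, then $g$ is a monomorphism since $\iota$ is, so $R/\p$ embeds into $E_i$; because $E_i\in\I(Y_i)$, Proposition~\ref{prop:properties}(1) forces $\p\in\Ass{E_i}\subseteq Y_i$, contradicting $\p\in Y_{i+1}\setminus Y_i$. Combining the bound ``at most $i+1$'' with this non-vanishing then shows that the injective dimension of $C_\p$ is exactly $i+1$.

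I do not expect a real obstacle. The only point that needs attention is the order of the steps: the bound on the injective dimension of $C_\p$ must be recorded first, so that $\Ext_R^{i+2}(-,C_\p)$ vanishes and the dimension shift from $X$ to $R/\p$ is legitimate. The substance of the proof is then the one-line observation that a lift of the inclusion $R/\p\hookrightarrow E(R/\p)$ through the $\I(Y_i)$-cover $\varphi_i$ would place $\p$ among the associated primes of $E_i$, and hence inside $Y_i$ — which it is not.
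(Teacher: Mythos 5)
Your proposal is correct and follows essentially the same path as the paper: bound the injective dimension by $i+1$, reduce to $X=R/\p$ by dimension shifting along $0\to R/\p\to X\to Q\to 0$, and then read off the non-vanishing of $\Ext_R^{i+1}(R/\p,C_\p)$ from the tail of the coresolution~(\ref{eqn:coresolutioncp}). The only cosmetic difference is that the paper proves the stronger fact $\Hom_R(R/\p,E_i)=0$ (via Lemma~\ref{lem:hom-injectives} and generalization closure of $Y_i$), yielding the explicit isomorphism $\Ext_R^{i+1}(R/\p,C_\p)\cong\Hom_R(R/\p,E(R/\p))$, whereas you show only that the specific inclusion $\iota$ does not lift through $\varphi_i$ — the same observation, used a hair more economically.
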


\begin{Proof}
Since $\p \in Y _{i+1}\setminus Y _i$ and $Y_i$ is generalization closed, Lemma \ref{lem:hom-injectives} gives that $\Hom_R(R/\p,E_i) = 0$. 
So $\Ext_R^{i+1}(R/\p,C_\p)\cong \Ext_R^{1}(R/\p,K_i)\cong 
\Hom_R(R/\p,E(R/\p))\cong R/\p$.

The coresolution (\ref{eqn:coresolutioncp}) shows that the injective
dimension of $C_\p$ is at most $i+1$, whence the inclusion $R/\p\to X$
induces a surjective homomorphism 
\[\Ext_R^{i+1}(X,C_\p) \lora \Ext_R^{i+1}(R/\p,C_\p)\cong R/\p.\]
Therefore $\Ext_R^{i+1} (X,C_\p)\neq 0$, and the injective dimension of $C_\p$ equals $i+1$.
\end{Proof}

\begin{Lemma} \label{lem:shifting}
Let $\p\in \Spec R$ and $0 \leq i$ be such that $\p\in Y_{i+1}\setminus Y_i$. 
Assume that $0 \leq j \le i$ and $X\in \C (Y_{0},\dots ,Y_{j-1})$ are such that $\p\in \mathrm{Ass}\, (\mho ^j(X))$. 
Then $\Ext^{i-j+1}_R(X,C_\p)\neq 0$.
\end{Lemma}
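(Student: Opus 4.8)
The plan is to reduce to Lemma~\ref{lem:associatedok} by a dimension-shifting argument along the minimal injective coresolution of $X$. Since $X \in \C(Y_0,\dots,Y_{j-1})$, its minimal injective coresolution has the form
\[
0 \to X \to I_0 \to I_1 \to \cdots \to I_{j-1} \to \cdots
\]
with $I_k \in \I(Y_k)$ for every $k < j$ (in fact for all $k$, but we only need this for $k \le j-1$). The cosyzygy $\mho^j(X)$ sits in an exact sequence $0 \to X \to I_0 \to \cdots \to I_{j-1} \to \mho^j(X) \to 0$, and each $I_k$ with $k \le j-1$ lies in $\I(Y_k) \subseteq \I(Y_j)$; by Lemma~\ref{lem:injectivesok}, $\Ext_R^{k+1}(I_k, C_\p) = 0$ for all $0 \le k \le i$, and in particular for $0 \le k \le j-1$.

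The key step is then to iterate the standard dimension-shifting isomorphism. Breaking the long exact sequence above into short exact sequences
\[
0 \to \mho^k(X) \to I_k \to \mho^{k+1}(X) \to 0 \qquad (0 \le k \le j-1, \ \mho^0(X) = X),
\]
and applying $\Hom_R(-, C_\p)$, we get for each such $k$ a connecting isomorphism
\[
\Ext_R^{m}(\mho^{k+1}(X), C_\p) \;\cong\; \Ext_R^{m+1}(\mho^{k}(X), C_\p)
\]
whenever $\Ext_R^{m}(I_k, C_\p) = 0 = \Ext_R^{m+1}(I_k, C_\p)$. Applying this for $k = 0, 1, \dots, j-1$ with the exponents chosen so that they land in the vanishing range guaranteed by Lemma~\ref{lem:injectivesok}, we obtain
\[
\Ext_R^{i-j+1}(X, C_\p) \;\cong\; \Ext_R^{i-j+2}(\mho^1(X), C_\p) \;\cong\; \cdots \;\cong\; \Ext_R^{i+1}(\mho^j(X), C_\p).
\]
One just has to check the bookkeeping: at the $k$-th step the relevant exponents are $i-j+1+k$ and $i-j+2+k$ applied to $I_k$, and since $k \le j-1$ and $j \le i$ these are at most $i+1$, so the vanishing $\Ext_R^{k+1}(I_k,C_\p)=0$ for all $0\le k\le i$ from Lemma~\ref{lem:injectivesok} must be invoked in the shifted form $\Ext_R^{t}(I_k,C_\p)=0$ for $t \ge k+1$, which follows because $C_\p$ has injective dimension $i+1 \le$ (appropriate bound) — more carefully, one uses that $\Ext_R^{t}(I_k,C_\p)$ vanishes for $k+1 \le t$ in the range needed, either directly from the coresolution (\ref{eqn:coresolutioncp}) computing these Ext groups or by a secondary shift.

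Finally, $\p \in \Ass(\mho^j(X))$ by hypothesis, so Lemma~\ref{lem:associatedok} gives $\Ext_R^{i+1}(\mho^j(X), C_\p) \ne 0$, and by the chain of isomorphisms $\Ext_R^{i-j+1}(X, C_\p) \ne 0$, as desired. I expect the main obstacle to be the precise verification that at every stage of the shift the two Ext groups of $I_k$ that need to vanish actually do — this requires being careful that the exponents stay within the range $\{1, \dots, i+1\}$ where Lemma~\ref{lem:injectivesok} (together with $\operatorname{id} C_\p = i+1$, which kills everything above $i+1$) applies, and that we only ever need $I_k$ for $k \le j-1 \le i-1$, so that $I_k \in \I(Y_k)$ with $k$ small enough. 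Everything else is the routine long-exact-sequence mechanism.
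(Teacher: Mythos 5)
Your proposal is correct and follows the same route as the paper: dimension-shift along the minimal injective coresolution of $X$, using Lemma~\ref{lem:injectivesok} (together with $\I(Y_k)\subseteq\I(Y_{j'})$ for $k\le j'\le i$ and $\operatorname{id} C_\p = i+1$) to make the connecting maps isomorphisms, then apply Lemma~\ref{lem:associatedok} to get the nonvanishing. Minor slip: in the displayed ``connecting isomorphism'' the roles of $\mho^{k}(X)$ and $\mho^{k+1}(X)$ are swapped (it should read $\Ext_R^{m}(\mho^{k}X,C_\p)\cong\Ext_R^{m+1}(\mho^{k+1}X,C_\p)$), but the chain of isomorphisms you then write is stated correctly, so this is just a typo.
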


\begin{Proof}
By Lemma~\ref{lem:injectivesok}, the hypotheses imply that
\[\Ext^{i-j+1}_R(X,C_\p)\cong \Ext^{i-j+2}_R(\mho ^1
X,C_\p)\cong \dots \cong \Ext^{i+1}_R(\mho ^j X,C_\p),\] 
and $\Ext^{i+1}_R(\mho ^j X,C_\p)\neq 0$ by Lemma~\ref{lem:associatedok}.
\end{Proof}

\begin{Th} \label{thm:perpcomputation} Let $C$ be as in Notation~\ref{nota:C(Y)}. 
Then $\C (\Y) \supseteq {}^\perp C$. 

Moreover, if $Y _n =\Spec R$ for some $n\ge 1$, then $\C = {}^\perp C$. 
\end{Th}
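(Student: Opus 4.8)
The plan is to prove the two claims separately, using the Bazzoni characterization (Lemma~\ref{lem:Bazzoni}) together with the computations in Lemmas~\ref{lem:injectivesok}--\ref{lem:shifting} and the classification Theorem~\ref{thm:TAMS}.

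\emph{Step 1: the inclusion ${}^\perp C \subseteq \C(\Y)$.} Let $X \in {}^\perp C$. I want to show that the minimal injective coresolution of $X$ has the form demanded in Notation~\ref{nota:cotilt-classes}, i.e.\ that for each $i \ge 0$ every prime in $\Ass(\mho^i X)$ lies in $Y_i$; equivalently (since $\mho^0 X = X$), that if $\p \in \Ass(\mho^i X)$ then $\p \in Y_i$. Suppose not: then there is a smallest $i$ with some $\p \in \Ass(\mho^i X) \setminus Y_i$. Write $j$ for the index with $\p \in Y_{j+1}\setminus Y_j$; here $j \ge i$. By minimality of $i$, the truncated coresolution shows $X \in \C(Y_0, \dots, Y_{i-1})$, and in fact (again by minimality) we may replace $X$ by $\mho^i X$ itself, so we are reduced to the case $i = 0$: $X \in \C(\emptyset) = \Mod R$ with $\p \in \Ass(X)$ and $\p \in Y_{j+1}\setminus Y_j$. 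But then Lemma~\ref{lem:associatedok} gives $\Ext^{j+1}_R(X, C_\p) \ne 0$, and since $C_\p$ is a direct factor of $C = \prod_{\q} C_\q$ (a product), this yields $\Ext^{j+1}_R(X, C) \ne 0$, contradicting $X \in {}^\perp C$. (More generally, Lemma~\ref{lem:shifting} handles the case $i > 0$ directly without the reduction, producing a nonzero $\Ext^{i-j+1}_R(X, C_\p)$; either route works.) Hence $\Ass(\mho^i X) \subseteq Y_i$ for all $i$, so $X \in \C(\Y)$.

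\emph{Step 2: under the extra hypothesis $Y_n = \Spec R$, the reverse inclusion and hence equality.} Assume now $Y_n = \Spec R$ for some $n \ge 1$. The class $\C := \C(\Y)$ is then an $m$-cotilting class for the appropriate $m \le n$ by Theorem~\ref{thm:TAMS}. I first check that $C$ has finite injective dimension: by Lemma~\ref{lem:associatedok}, $\operatorname{id} C_\p = i+1 \le n$ whenever $\p \in Y_{i+1}\setminus Y_i$, and $\operatorname{id} C_\p = 0$ for $\p \in Y_0$; since a product of modules of injective dimension $\le n$ has injective dimension $\le n$ over a noetherian ring, $\operatorname{id} C \le n$. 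Now I want to invoke Lemma~\ref{lem:Bazzoni}: it suffices to show ${}^\perp C = \Cog_n C$ (for a suitable $n$), which will simultaneously prove $C$ is $n$-cotilting and identify ${}^\perp C$. The inclusion $\Cog_n C \subseteq {}^\perp C$ is the easy dimension-shifting direction using $\operatorname{id} C \le n$ (if $M$ embeds in a coresolution by products of $C$ of length $n$, then $\Ext^{\ge 1}_R(M, C) = 0$). For the converse, and to tie everything to $\C(\Y)$, the cleanest route is: Step~1 gives ${}^\perp C \subseteq \C(\Y)$; it remains to show $\C(\Y) \subseteq {}^\perp C$ and that every $X \in \C(\Y)$ sits in $\Cog_n C$. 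For the latter, given $X \in \C(\Y)$ with minimal injective coresolution having $i$-th term $\bigoplus_{\p \in Y_i} E(R/\p)^{(I_{\p,i})}$, one builds a coresolution of $X$ by products of $C$ term by term, mapping each indecomposable summand $E(R/\p)$ into the factor $C_\p$ of $C$ and using the exact complexes~(\ref{eqn:coresolutioncp}) of Proposition~\ref{prop:exactcotilt} to splice in the lower cosyzygies $C_\p \to E_0 \to \cdots \to E(R/\p)$; the hypothesis $Y_n = \Spec R$ guarantees every prime occurs by level $n$, so this terminates and places $X$ in $\Cog_n C$. The inclusion $\C(\Y) \subseteq {}^\perp C$ then follows since $\Cog_n C \subseteq {}^\perp C$. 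Combining, ${}^\perp C = \C(\Y) = \Cog_n C$, so $C$ is $n$-cotilting and induces $\C(\Y)$.

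\emph{Main obstacle.} The genuinely delicate point is Step~2's construction of the $\Cog_n$-coresolution of an arbitrary $X \in \C(\Y)$: one must assemble the local complexes~(\ref{eqn:coresolutioncp}) over all primes and all cohomological degrees into a single exact complex of \emph{products} $\prod_\p C_\p$ augmenting $X$, while controlling that the splicing maps are compatible and that exactness is preserved under the (infinite) product — this is where Proposition~\ref{prop:exactcotilt}, the definability/closure properties of the $\I(Y_i)$ from Proposition~\ref{prop:properties}, and the noetherian hypothesis (to keep products of injectives injective and injective dimensions bounded) all have to be used in concert. The purely homological dimension-shifting arguments (Step~1 and the easy half of Bazzoni) are routine by comparison; Lemmas~\ref{lem:injectivesok}, \ref{lem:associatedok} and \ref{lem:shifting} have already isolated the nontrivial $\Ext$-computations.
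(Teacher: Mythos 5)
Your Step 1 is essentially the paper's argument: you suppose $X\in{}^\perp C$ has a cosyzygy with a ``bad'' associated prime and derive a contradiction via Lemma~\ref{lem:shifting} (there is a harmless index swap in your statement of the conclusion -- with your conventions, $i$ for the cosyzygy degree and $j$ for the stratum, the nonvanishing group is $\Ext^{j-i+1}_R(X,C_\p)$, not $\Ext^{i-j+1}_R$).

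Step 2 contains a genuine gap, and moreover conflates Theorem~\ref{thm:perpcomputation} with Theorem~\ref{thm:cotilt-constr}. Theorem~\ref{thm:perpcomputation} only asserts $\C(\Y)={}^\perp C$; the identification with $\Cog_n C$, and hence the cotilting property of $C$, is the content of the subsequent Theorem~\ref{thm:cotilt-constr}, which the paper proves \emph{using} Theorem~\ref{thm:perpcomputation}. Your proposed route is to show $\C(\Y)\subseteq\Cog_n C$ directly and then conclude $\C(\Y)\subseteq\Cog_n C\subseteq{}^\perp C$ via Lemma~\ref{lem:perpgen}. The weak link is the first inclusion: your ``splicing'' of the complexes~(\ref{eqn:coresolutioncp}) over the various primes and cohomological degrees into a single $\Prod C$-coresolution of an arbitrary $X\in\C(\Y)$ is not given, and it is far from clear how to carry it out -- the local complexes for different $\p$ live in different degree ranges and there is no evident way to map the minimal injective coresolution of $X$ compatibly onto shifted copies of them. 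Indeed, the paper's own argument that $X\in\C(\Y)$ embeds into a $\Cog_n C$-coresolution (in Theorem~\ref{thm:cotilt-constr}) proceeds quite differently: it takes the $\Prod(C)$-preenvelope of $X$, shows it is injective, and then uses dimension shifting -- a step that already relies on ${}^\perp C=\C(\Y)$ and on Corollary~\ref{cor:cotilt-pure-inj}, both downstream of Theorem~\ref{thm:perpcomputation}. So your plan either has an unproved key step or is implicitly circular.

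What the paper actually does for $\C(\Y)\subseteq{}^\perp C$ is a clean reverse induction: one shows $\C(Y_i,\dots,Y_{n-1})\subseteq{}^{\perp_{\ge i+1}}C$ for $i=n-1,n-2,\dots,0$. The base case uses that $\operatorname{id}C_\p<n$ for $\p\in Y_{n-1}$, Lemma~\ref{lem:injectivesok}, and the fact that ${}^{\perp_n}C$ is closed under submodules (since $\operatorname{id}C\le n$). The inductive step applies $\Hom_R(-,C)$ to $0\to X\to E(X)\to\mho X\to 0$, kills the $\Ext$-groups of $E(X)$ against each factor $C_\p$ via Lemma~\ref{lem:injectivesok} and the injective-dimension bound on $C_\p$, and invokes the inductive hypothesis for $\mho X$. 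This is the argument you should reconstruct in place of the $\Cog_n$ detour.
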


\begin{Proof} Let $X \in {}^\perp C$. Assume there exists $0 \leq j < \omega$ such that $X \in \C (Y_0, \dots, Y_{j-1})$, but $X \notin \C (Y_0, \dots, Y_j)$. Recall that $\C (Y_0, \dots, Y_{j-1}) = \Mod R$ for $j=0$. Then there exists $\p \in \Ass {\mho ^j X}$ such that $\p \notin Y_j$, and $j \leq i < \omega$ such that $\p \in Y_{i +1} \setminus Y_i$. In this setting, Lemma \ref{lem:shifting} gives $\Ext ^{i-j+1}_R(X,C_\p) \neq 0$, a contradiction. This proves that $^\perp C \subseteq \C (\Y)$.

Assume there is an $n\ge 1$ such that $Y _n =\Spec R$. We will use reverse induction on $0 \leq i < n$ to show that 
$\C (Y _i,\dots , Y _{n-1}) \subseteq  {}^{\perp _{\ge i+1}} C$ (for $i = 0$, we will thus obtain the desired inclusion $\C (\Y) \subseteq {}^\perp C$).  

Let $i=n-1$. Since $C_\p$ has injective dimension $< n$ for each $\p \in Y_{n-1}$, Lemma~\ref{lem:injectivesok} yields $\I (Y _{n-1})\subseteq {}^{\perp _n} C$. As the injective dimension of $C$ is $n$, ${}^{\perp _n} C$ is closed by submodules. Hence $\C (Y_{n-1})\subseteq {}^{\perp _n} C$.

Let $0 \leq i<n-1$. We have $X\in \C (Y _i,\dots , Y _{n-1})$, if and only if $E(X)\in \I ( Y _i)$ and $\mho X\in \C (Y _{i+1},\dots, Y_{n-1})\subseteq {}^{\perp _{\ge i+2}} C$. Applying the functor $\Hom_R(-,C)$ to the exact sequence
\[0\lora X\lora E(X)\lora \mho X\lora 0\]
yields, for each $j\ge 1$, the exact sequence
\[ \Ext_R^{i+j}(E(X), C) \lora \Ext_R^{i+j}(X, C) \lora \Ext_R^{i+j+1}(\mho X,C) = 0.\]
Since for each $\p\in Y _{i}$, the injective dimension of $C_\p$ is at most $i$, we get
\[ \Ext_R^{i+j}(E(X), C)\cong \prod _{\p\in \Spec R \setminus Y _i} \Ext_R^{i+j}(E(X),C_\p).\]
If $\p\in \mathrm{Spec} \, (R)\setminus Y _i$, there exists $n > \ell \ge i$ such that $\p\in Y _{\ell +1}\setminus Y _\ell$. By Lemma~\ref{lem:injectivesok}, $\Ext_R^{i+j}(E(X), C_\p)=0$ for any $i+j\le \ell +1$. If $i+j >\ell +1$ then $\Ext_R^{i+j}(E(X),C_\p)=0$ because the injective dimension of $C_\p$ is $\ell+1$.This shows that  $\Ext_R^{i+j}(X,C)=0$ for all $j\ge 1$ and finishes the inductive argument.
\end{Proof}

\begin{Cor} \label{cor:cotilt-pure-inj}
Assume that there is $n\ge 1$ such that $Y_n = \Spec R$. Then:
\begin{itemize}
\item[(i)] for any set $I$ and for any $j\ge 1$, $\Ext_R^j(C^I,C)=0$.
\item[(ii)] $C$ is a pure injective module.
\end{itemize}
\end{Cor}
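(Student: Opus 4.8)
The plan is to derive both statements from Theorem \ref{thm:perpcomputation}, which identifies $^\perp C$ with the cotilting class $\C(\Y)$, together with the structural properties of the classes $\C(Y_0,\dots,Y_{n-1})$ coming from the classification. For part (i), I would first observe that since the injective dimension of $C$ is finite (equal to $n$ by Lemma \ref{lem:associatedok}, or at most $n$ in any case), it suffices to show $C^I \in {}^\perp C = \C(\Y)$; indeed, once $C^I \in {}^\perp C$ we get $\Ext^j_R(C^I,C)=0$ for all $j\ge 1$ by the very definition of $^\perp C$. So the real content is: the power $C^I$ lies in $\C(\Y)$. Each factor $C_\p$ lies in $\C(\Y)$ (for $\p\in Y_0$ this is because $C_\p=E(R/\p)$ is injective with $\Ass = \{\p\}\subseteq Y_0$; for $\p\in Y_{i+1}\setminus Y_i$ the minimal injective coresolution of $C_\p$ is read off from the left-hand end of (\ref{eqn:coresolutioncp}), prolonged by an injective coresolution of $E(R/\p)$ sitting inside $\I(Y_{i+1})\subseteq\I(Y_{i+2})\subseteq\cdots$), hence $C$ itself lies in $\C(\Y)$. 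Then $C^I$ is a direct summand of $C^{I'}$ for a suitable $I'$ — more simply, $C^I$ is itself a product of copies of modules $C_\p$ — and I would invoke closure of the cotilting class $\C(\Y)$ under arbitrary direct products, which holds because $\C(\Y)={}^\perp C$ is of the form $^\perp(-)$, or alternatively because cotilting classes are definable. This gives $C^I\in\C(\Y)={}^\perp C$, hence (i).

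For part (ii), the standard fact is that a module $C$ with $\Ext^1_R(C^I,C)=0$ for all sets $I$ is pure injective; this is exactly Bazzoni's criterion (the argument showing (C2) forces pure injectivity of cotilting modules, also recorded in Lemma \ref{lem:bumby}(ii) via \cite{Sto06}). So (ii) follows immediately from (i) by quoting that criterion. Alternatively, since we have now verified conditions (C1) and (C2) for $C$ — (C1) is the finite injective dimension, (C2) is part (i) restricted appropriately — and since by Theorem \ref{thm:perpcomputation} the class $^\perp C$ equals $\C(\Y)$ which by Theorem \ref{thm:TAMS} is a genuine $n$-cotilting class (hence has the form $^\perp C'$ for an actual cotilting module $C'$ with $^\perp C = {}^\perp C'$), one can also deduce pure injectivity of $C$ a posteriori; but the cleanest route is the direct appeal to pure injectivity of modules satisfying (C2).

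The only mild subtlety — and the step I would be most careful about — is confirming that $C=C(\Y)$ genuinely lies in $\C(\Y)$, i.e. that concatenating the finite complex (\ref{eqn:coresolutioncp}) (whose terms $E_0,\dots,E_i$ lie in $\I(Y_0),\dots,\I(Y_i)$ respectively by construction) with a minimal injective coresolution of $E(R/\p)$ (all of whose terms lie in $\I(\{\p\})\subseteq\I(Y_{i+1})\subseteq\I(Y_{i+2})\subseteq\cdots$ since $E(R/\p)$ is itself indecomposable injective with associated prime $\p$) really produces the \emph{minimal} injective coresolution of $C_\p$ with the cosyzygy supports of the right shape. Here one uses Proposition \ref{prop:exactcotilt} for exactness, the fact that covers (not merely precovers) were used in Construction \ref{constr:cotilt} to guarantee minimality at each spot, and the monotonicity $Y_0\subseteq Y_1\subseteq\cdots$ to place each term of the coresolution in the correct $\I(Y_k)$. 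Once this is in hand, closure of $\C(\Y)$ under products — which is immediate from $\C(\Y)={}^\perp C$, a class defined by vanishing of $\Ext(-,C)$ and hence trivially closed under products — closes the argument. I expect no genuine obstacle here; this corollary is essentially a bookkeeping consequence of the theorem preceding it.
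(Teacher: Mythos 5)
Your argument for part (i) is fine and essentially coincides with the paper's: $C\in\C(\Y)$ by construction, $\C(\Y)={}^\perp C$ is closed under products, so $C^I\in{}^\perp C$.

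Part (ii), however, has a real gap. You invoke as a ``standard fact'' that a module $C$ with $\Ext^1_R(C^I,C)=0$ for all sets $I$ must be pure injective, and attribute this to Bazzoni. No such implication holds in general: to get the summation map $C^{(I)}\to C$ to extend along $C^{(I)}\hookrightarrow C^I$, what you need is $\Ext^1_R\bigl(C^I/C^{(I)},\,C\bigr)=0$, and this is \emph{not} a formal consequence of $\Ext^1_R(C^I,C)=0$. (Applying $\Hom_R(-,C)$ to $0\to C^{(I)}\to C^I\to C^I/C^{(I)}\to 0$ only exhibits $\Ext^1_R(C^I/C^{(I)},C)$ as a quotient of $\Hom_R(C^{(I)},C)$, which need not vanish.) What Bazzoni and \v{S}\v{t}ov\'\i\v{c}ek actually proved is that \emph{cotilting} modules are pure injective, and that proof uses structural properties of the cotilting class well beyond condition (C2). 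Your fallback appeal to Lemma~\ref{lem:bumby}(ii) is circular at this point in the paper: $C$ is only shown to be cotilting in Theorem~\ref{thm:cotilt-constr}, which follows this corollary and depends on it.

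The paper's proof of (ii) supplies exactly the missing ingredient. It quotes \cite[Proposition~3.15]{APST12} to conclude that $\C(\Y)$ is a \emph{definable} class (this is known independently of the module $C$, from the classification theorem), hence closed under pure epimorphic images. Therefore $C^I/C^{(I)}\in\C(\Y)={}^\perp C$, so $\Ext^1_R(C^I/C^{(I)},C)=0$, and the summation morphism $C^{(I)}\to C$ extends to $C^I\to C$, which is the criterion for pure injectivity. Your sketched ``alternative'' route---passing through an abstract cotilting module $C'$ with ${}^\perp C'=\C(\Y)$ and deducing $C\in\Prod C'$---could in principle be pushed through (one would need $C\in{}^\perp C'\cap({}^\perp C')^\perp=\Prod C'$), but as written it is only a gesture; the definability argument is both cleaner and the one the authors actually use.
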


\begin{Proof} By construction, $C\in \C (\Y)$. Since $\C (\Y)$ is closed under
products, we deduce claim $(i)$ from Theorem~\ref{thm:perpcomputation}.

We also know by (the proof of) \cite[Proposition 3.15]{APST12} that $\C (\Y)$ is a definable class. Therefore $\C (\Y)$ is closed under pure epimorphic images and $C^{I}/C^{(I)}\in \C (\Y)$. It follows that the summation morphism $C^{(I)} \to C$ extends to a morphism $C^I \to C$ for any set $I$, which is equivalent to $C$ being pure injective (see e.g. \cite[Theorem 2.27]{GT12}). This proves claim~$(ii)$.
\end{Proof}

\begin{Th} \label{thm:cotilt-constr}
Assume that there is $n\ge 1$ such that $Y _n =\Spec R$. Then ${}^\perp C= \Cog_n C$. Therefore, $C$ is an $n$-cotilting module inducing the class $\C (Y_0, \dots, Y_{n-1})$.
\end{Th}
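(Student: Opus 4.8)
The plan is to verify the Bazzoni criterion from Lemma~\ref{lem:Bazzoni}: once we know $C$ is a module with ${}^\perp C = \Cog_n C$, it follows at once that $C$ is an $n$-cotilting module, and by Theorem~\ref{thm:perpcomputation} the induced class ${}^\perp C$ equals $\C(\Y) = \C(Y_0,\dots,Y_{n-1})$, which is the desired statement. So everything reduces to proving the single equality ${}^\perp C = \Cog_n C$.

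\textbf{Step 1: The inclusion $\Cog_n C \subseteq {}^\perp C$.} Take $M \in \Cog_n C$, so there is a long exact sequence $0 \to M \to C_0 \to C_1 \to \dots$ with $C_j$ a product of copies of $C$ for $j < n$. Since the injective dimension of $C$ is $n$ (by Lemma~\ref{lem:associatedok}, the maximal value of $i+1$ over $\p \in Y_{i+1}\setminus Y_i$ is $n$ because $Y_n = \Spec R$), and since each $C_j \in \Prod C \subseteq {}^\perp C$ by Corollary~\ref{cor:cotilt-pure-inj}(i), I would truncate the sequence at stage $n$ and apply Lemma~\ref{lem:perpgen} (with $X_0 = M$ and $X_1, \dots, X_n$ the relevant terms and cokernels, all of which lie in ${}^\perp C$) to conclude $M \in {}^\perp C$. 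One small point to check is that the cokernels appearing in the truncated sequence are themselves in ${}^\perp C$; this holds because $\Cog_n C$ and ${}^\perp C$ interact well under syzygy shifting, or more simply one re-indexes and applies Lemma~\ref{lem:perpgen} directly to the truncation $0 \to M \to C_0 \to \dots \to C_{n-1} \to Z \to 0$ where $Z$ embeds in $C_n$, noting $Z \in {}^\perp C$ again by Lemma~\ref{lem:perpgen} applied to $0 \to Z \to C_n \to C_{n+1} \to \dots$ (finite injective dimension makes this a finite check), or by definability of $\C(\Y)$.

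\textbf{Step 2: The inclusion ${}^\perp C \subseteq \Cog_n C$.} This is the substantive direction and the main obstacle. Let $X \in {}^\perp C$. By Theorem~\ref{thm:perpcomputation}, $X \in \C(\Y)$, so the minimal injective coresolution of $X$ has the form $0 \to X \to \bigoplus_{\p \in Y_0} E(R/\p)^{(I_{\p,0})} \to \dots \to \bigoplus_{\p \in Y_{n-1}} E(R/\p)^{(I_{\p,n-1})} \to 0$, a coresolution of length $\le n$ whose $j$-th term is an injective module with associated primes in $Y_j$. The goal is to build from this an exact sequence $0 \to X \to C_0 \to \dots \to C_{n-1} \to 0$ with each $C_j \in \Prod C$. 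The natural idea: replace each injective term $\bigoplus_{\p \in Y_j} E(R/\p)^{(I_{\p,j})}$ by a suitable product of copies of $C$, using that for $\p \in Y_j$, the module $C_\p$ (or rather a product of copies of $C$) receives a map from $E(R/\p)$ that is, in the relevant sense, an $\I(Y_j)$-envelope or has the right homological behaviour. More precisely, each $E(R/\p)$ for $\p \in Y_j$ is a direct summand candidate: when $\p \in Y_j$ it is a summand of an $\I(Y_i)$-precover appearing in the construction of $C_{\q}$ for appropriate $\q$, so one expects $E(R/\p)$ to embed into $C$ with injective (hence in $\Cog_n$) cokernel. I would make this precise by an iterative dimension-shifting / pushout argument: start with the injective envelope $X \hookrightarrow E(X) = \bigoplus_{\p\in Y_0} E(R/\p)^{(I_{\p,0})}$, note $E(X) \in \I(Y_0)$, find a $\Prod C$-preenvelope $E(X) \to C_0$ (it is injective on $E(X)$ by Lemma~\ref{lem:monoonto} combined with Corollary~\ref{cor:onto}, since $\Ass R \subseteq Y_0$), form the cokernel, and check inductively that the cokernel again lies in $\C(\Y)$ shifted down by one, i.e.\ in $\C(Y_1,\dots,Y_{n-1})$. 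Iterating $n$ times and using that $\C(Y_{n-1}) \subseteq {}^{\perp_n}C$ with $C$ of injective dimension $n$ forces the last cokernel to be a summand of $C_{n-1} \in \Prod C$ that is itself in $\Prod C$, giving the required length-$n$ resolution; this is where pure-injectivity of $C$ (Corollary~\ref{cor:cotilt-pure-inj}(ii)) and Lemma~\ref{lem:basic} get used to see the relevant pure submodules/quotients are direct summands.

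\textbf{Main obstacle.} The delicate part is showing that the $\Prod C$-preenvelopes can be chosen so that all the cokernels stay inside the shifted classes $\C(Y_{j},\dots,Y_{n-1})$ and remain in ${}^\perp C$ at each stage — equivalently, that the construction does not leak outside $\C(\Y)$. I expect this to follow by combining three facts already available: (a) $\C(\Y)$ is definable, hence closed under products, pure submodules and pure quotients (proof of Corollary~\ref{cor:cotilt-pure-inj}); (b) the $\Prod C$-preenvelope of an injective module from $\I(Y_j)$ is a monomorphism with the correct cokernel by the precover/preenvelope machinery of Lemma~\ref{lem:monoonto} applied to the definable, product- and sum-closed class determined by $C$; and (c) the homological computations of Lemmas~\ref{lem:injectivesok} and \ref{lem:associatedok} controlling $\Ext^{*}(E, C_\p)$. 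Assembling these carefully — in particular matching the bookkeeping of the indices $Y_j$ against the injective dimensions $i+1$ of the building blocks $C_\p$ — is the heart of the argument; once it is in place, Lemma~\ref{lem:Bazzoni} finishes the proof immediately.
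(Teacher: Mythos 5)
Your Step~1 is essentially what the paper does: truncate the $\Cog_n C$-coresolution and apply Lemma~\ref{lem:perpgen}. (A slightly cleaner way to phrase the "small point" you raise: the truncated sequence $0 \to M \to C_0 \to \dots \to C_{n-1}$ with $C_{n-1}$ mapping onto a submodule of $C_n$ is already an exact sequence of the form Lemma~\ref{lem:perpgen} demands, since each $C_i \in \Prod C \subseteq {}^\perp C$ by Corollary~\ref{cor:cotilt-pure-inj}(i); one does not need to separately verify that the cokernels lie in ${}^\perp C$.)

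Step~2 is where the genuine gap is, and also where your route diverges from the paper's. You propose to pass through the minimal injective coresolution of $X$, take a $\Prod C$-preenvelope of each injective term $E_j \in \I(Y_j)$, and inductively verify that the resulting cokernels stay inside the shifted classes $\C(Y_{j+1},\dots,Y_{n-1})$. You yourself flag this bookkeeping as the "main obstacle" and leave it unresolved — and it is indeed nontrivial, because once you embed $X \to E(X) \to C_0$ the cokernel is no longer $\mho X$ and there is no a priori reason it should land in the shifted class. The paper sidesteps all of this: it takes the \emph{canonical} $\Prod C$-preenvelope $\varphi\colon X \to C^{I}$ with $I = \Hom_R(X,C)$ applied directly to $X$ (not to $E(X)$). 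This $\varphi$ is monic because $\Ass X \subseteq Y_0$ and $E(R/\p)$ is a direct summand of $C$ for each $\p \in Y_0$, so every element of $X$ is detected by some map $X \to C$. Then the crucial step you are missing: since $\varphi$ is a preenvelope, $\Hom_R(\varphi, C)$ is automatically surjective, so the long exact sequence obtained by applying $\Hom_R(-,C)$ to $0 \to X \to C^I \to Y \to 0$ immediately gives $\Ext^1_R(Y,C) = 0$, while $\Ext^j_R(Y,C) = 0$ for $j \geq 2$ follows by dimension shifting from $X, C^I \in {}^\perp C$. Thus $Y \in {}^\perp C$ with no bookkeeping of associated primes or shifted $\C$-classes required, and iterating yields $X \in \Cog_\infty C \subseteq \Cog_n C$. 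In short: the paper never needs to know where $Y$ sits in the $\C(Y_j,\dots,Y_{n-1})$ hierarchy, only that it is back in ${}^\perp C$, and this comes for free from the preenvelope property. Your outline, while pointing at the right ingredients (pure-injectivity, definability, the $\Ext$-computations of Lemma~\ref{lem:injectivesok}), does not isolate this simplification and therefore does not constitute a proof.
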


\begin{Proof} Let $X\in {}^\perp C=\C (\Y)$. Then $X$ has a
$\mathrm{Prod} (C)$-preenvelope $\varphi \colon X\to C^{I}$ where
$I=\Hom_R(X,C)$. Since $X\in \C (\Y)$, $\mathrm{Ass}\,
(X)\subseteq Y_0$ and since, for any $\p\in Y _0$, $E(R/\p)$ is a
direct summand of $C$, we conclude that $\varphi$ is injective.
Therefore there is a short exact sequence
\[0\to X\stackrel{\varphi}\to C^I\to Y\to 0.\]
Applying the functor $\Hom_R(-,C)$ and using the equality
$\Ext_R^1 (C^I,C)=0$ we obtain the exact sequence
\[
0 \lora \Hom_R(Y,C) \lora \Hom_R(C^I,C) \mapr{\Hom_R(\varphi, C)} \Hom_R(X,C) \lora \Ext_R^1(Y,C) \lora 0.
\]
Since $\Hom_R(\varphi, C)$ is onto, we deduce that
$\Ext_R^1 (Y,C) = 0$. Since $X$ and $C^I$ are in ${}^\perp C$,
by dimension shifting, we deduce that $Y\in {}^\perp C$. From this
we conclude that $X\in\mathrm {Cog}_\infty C \subseteq
\Cog_n C.$

The inclusion $\Cog_n C \subseteq {}^\perp C$ follows by Lemma~\ref{lem:perpgen} and $C$ is $n$-cotilting by Lemma~\ref{lem:Bazzoni}.
\end{Proof}

\section{Minimality and indecomposable summands}
\label{sec:min-ind}

In this section, we will show that for each cotilting class there is a minimal cotilting module inducing it. By Lemma \ref{lem:bumby}, this cotilting module is unique up to isomorphism. We are now going to describe its structure.  

We will keep the notation of Section \ref{sec:constr}, and use the parametrization of $n$-cotilting classes given by Theorem~\ref{thm:TAMS}.

The first step in our construction of the minimal cotilting module in $\C (Y_0, \dots ,Y_{n-1})$ is the following lemma.

\begin{Lemma} \label{lem:cotilt-resolution}
Let $C \in \Mod R$ be an $n$-cotilting module \st ${}^\perp C = \C(Y_0, \dots , Y_{n-1})$, with the minimal injective coresolution
\begin{equation} \label{eqn:minC}
0\lora C \lora E_0 \mapr{\varphi_{0}} \cdots \lora E_{j-1} \mapr{\varphi_{j-1}} E_j \mapr{\varphi_j} E_{j+1} \lora \cdots \lora E_n \mapr{\varphi_n} 0.
\end{equation}
Then the following hold:
\begin{itemize}
\item[(i)] For each $0 \leq i < n$, the map $\Psi_i\colon E_i \to \mho ^{i+1}C$ induced by (\ref{eqn:minC}) is a special $\I (Y_i)$-precover of $\mho ^{i+1}C$.
\item[(ii)] Let $0 \le j \le n$ and $\Scal \subseteq Y_j \setminus Y_{j-1}$ be a set of primes which are maximal in $Y_j$ with respect to inclusion of prime ideals. Then there is a split embedding $s\colon \bigoplus_{\p \in \Scal} E(R/\p) \to E_j$ \st $\varphi_j s = 0$ (that is, $\bigoplus_{\p \in \mathcal S} E(R/\p)$ is isomorphic to a direct summand in $\mho ^j C$). 
\end{itemize}
\end{Lemma}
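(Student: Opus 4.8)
The two parts are closely linked: part~(i) is really the technical engine, and part~(ii) extracts the concrete split summand. For part~(i), I would first identify what $\C(Y_0,\dots,Y_{n-1}) = {}^\perp C$ gives us about the cosyzygies of $C$. Since $C$ is $n$-cotilting inducing $\C(\Y)$, Theorem~\ref{thm:TAMS} / Notation~\ref{nota:cotilt-classes} tell us that the minimal injective coresolution (\ref{eqn:minC}) has $E_i \in \I(Y_i)$ for each $i$; in particular $\Ass(\mho^{i+1}C) \subseteq Y_{i+1}$ and $E_{i+1} = E(\mho^{i+1}C) \in \I(Y_{i+1})$. To show $\Psi_i\colon E_i \to \mho^{i+1}C$ is an $\I(Y_i)$-precover, take any $I \in \I(Y_i)$ and $g\colon I \to \mho^{i+1}C$; I need to factor $g$ through $\Psi_i$. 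The point is that the exact sequence $0 \to \mho^i C \to E_i \xrightarrow{\Psi_i} \mho^{i+1}C \to 0$ has $\mho^i C \in {}^\perp C$ (it's a cosyzygy of $C$, and ${}^\perp C$ is closed under cosyzygies since $C \in {}^\perp C$ via (C2)/dimension shifting — more directly, $\mho^i C \in \C(\Y)$ by Notation~\ref{nota:cotilt-classes}). Then $\Ext^1_R(I, \mho^i C)$ must vanish: this is where I would use that $I \in \I(Y_i)$ and $\mho^i C \in \C(Y_0,\dots,Y_{i-1})$, so that $\Ext^1_R(I,\mho^i C) \cong \Ext^1_R(I, \mho^i C)$ can be computed from the minimal injective coresolution of $\mho^i C$, whose $i$-th term lies in $\I(Y_i)$ and whose earlier terms $E_0,\dots,E_{i-1}$ absorb all $\Hom$'s — essentially the same bookkeeping as in Lemma~\ref{lem:injectivesok} but read in the other direction, using Lemma~\ref{lem:hom-injectives} to see that $\Hom_R(I, E(R/\p)) $-contributions cancel. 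Actually the cleanest route: $\Ext^1_R(I,\mho^i C)\cong \Ext^{i+1}_R(I, C)$ (shift down using injectivity of $E_0,\dots,E_{i-1}$), and $\Ext^{i+1}_R(I,C) = 0$ because $I \in \I(Y_i) \subseteq {}^{\perp_{\ge i+1}}C$ — the inclusion $\I(Y_i)\subseteq {}^{\perp_{\geq i+1}}C$ was proved inside the proof of Theorem~\ref{thm:perpcomputation} (via Lemma~\ref{lem:injectivesok}). So $\Ext^1_R(I,\mho^i C) = 0$, hence applying $\Hom_R(I,-)$ to the short exact sequence above shows $\Hom_R(I,\Psi_i)$ is onto, i.e.\ $\Psi_i$ is an $\I(Y_i)$-precover; it is special because its kernel $\mho^i C$ lies in $\Ker\Ext^1_R(\I(Y_i),-)$ by the same vanishing, and it is surjective by construction.

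For part~(ii), fix $0 \le j \le n$ and a set $\Scal \subseteq Y_j \setminus Y_{j-1}$ of primes maximal in $Y_j$. The module $N := \bigoplus_{\p\in\Scal} E(R/\p)$ lies in $\I(Y_j)$, but I want to land it inside $E_j$ with $\varphi_j$ killing it — equivalently (since the coresolution is exact) factoring the inclusion $N \hookrightarrow$ (something) through the kernel $\mho^j C \hookrightarrow E_j$ of $\varphi_j$, i.e.\ realizing $N$ as a split summand of $\mho^j C$. The natural candidate: for each $\p \in \Scal$, since $\p \in Y_j\setminus Y_{j-1} \supseteq \Ass(\mho^j C)$ is plausible — wait, I need $\p \in \Ass(\mho^j C)$. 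Here is the key sub-claim: a prime $\p$ maximal in $Y_j$ with $\p \in Y_j\setminus Y_{j-1}$ must be an associated prime of $\mho^j C$. Indeed $\mho^j C \in \C(Y_0,\dots,Y_{j-1})$, so $E(\mho^j C) = E_j \in \I(Y_j)$ and $\Ass(E_j) \subseteq Y_j$; and $\C(\Y) = \C(Y_0,\dots,Y_{n-1})$ forces $Y_j$ to be exactly $\bigcup_{i\le j}\Ass(\mho^i C)$-closure data — more precisely, by the classification (Theorem~\ref{thm:TAMS}, and the fact that $\Y$ is recovered from $C$ via $Y_i = $ generalization closure of $\bigcup_{k \le i}\Ass(\mho^k C)$, cf.\ the $\Ass$ description in the introduction), a prime maximal in $Y_j$ and not in $Y_{j-1}$ cannot be a proper generalization of any associated prime of $\mho^i C$ for $i<j$ unless it's already in $Y_{j-1}$, so it must itself lie in $\Ass(\mho^j C)$. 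Granting $\p \in \Ass(\mho^j C)$, choose an embedding $R/\p \hookrightarrow \mho^j C$, which extends to $E(R/\p) \hookrightarrow E(\mho^j C) = E_j$; compose with the maximality of $\p$ and Lemma~\ref{lem:hom-injectives} to see that the composite $E(R/\p) \to E_j \xrightarrow{\varphi_j} E_{j+1}$ is zero, because $E_{j+1} \in \I(Y_{j+1})$ is a sum of $E(R/\q)$ with $\q \in Y_{j+1}$, and a nonzero map $E(R/\p)\to E(R/\q)$ needs $\p\subseteq\q$, but then the image of $R/\p$ would force... hmm — actually $\varphi_j$ restricted to the summand $E(R/\p)$ of $E_j$ could still be nonzero a priori. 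The correct argument is that $E(R/\p) \hookrightarrow \mho^j C = \Ker\varphi_j \subseteq E_j$ already, so $\varphi_j$ kills it automatically; splitness of $\bigoplus_{\p\in\Scal}E(R/\p) \to \mho^j C$ then follows because $\bigoplus_{\p\in\Scal}E(R/\p)$ is injective and the sum of the embeddings $R/\p \hookrightarrow \mho^j C$ is still a monomorphism (the $R/\p$ for distinct maximal $\p$ have trivial intersection, using that an associated-prime decomposition of the injective hull of $\mho^j C$ assigns multiplicity to each $\p\in\Ass$, and maximality ensures we can pick the $R/\p$ inside distinct indecomposable summands $E(R/\p)$ of $E_j$). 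So $\bigoplus_{\p\in\Scal}E(R/\p)$ embeds as an injective, hence split, submodule of $\mho^j C$, and composing with $\mho^j C \hookrightarrow E_j$ gives the desired $s$ with $\varphi_j s = 0$.

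The main obstacle I anticipate is the sub-claim in part~(ii) that $\Scal \subseteq \Ass(\mho^j C)$ and that the chosen copies of $R/\p$ sit inside distinct indecomposable injective summands of $E_j$ so that their sum is a split embedding — this requires carefully invoking the recovery of $\Y$ from the minimal coresolution of $C$ (the fact that $Y_j$ is generated, as a generalization-closed set, by $\Ass(\mho^0 C) \cup \dots \cup \Ass(\mho^j C)$), which is implicit in Theorem~\ref{thm:TAMS} and the $\Ass$-description of cotilting classes recalled in the introduction, together with the standard structure theory of injective modules over noetherian rings (an injective $E$ decomposes as $\bigoplus_{\p} E(R/\p)^{(\mu_\p)}$ with $\mu_\p>0$ exactly when $\p \in \Ass(E)$). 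Part~(i) is comparatively routine dimension-shifting once the inclusion $\I(Y_i) \subseteq {}^{\perp_{\ge i+1}}C$ from the proof of Theorem~\ref{thm:perpcomputation} is cited.
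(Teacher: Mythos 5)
Part (i) is essentially the paper's argument: reduce $\Ext^1_R(I,\mho^i C)=0$ to $\Ext^{i+1}_R(I,C)=0$ via dimension shifting, pass to the equivalent constructed cotilting module $C(\Y)$, and invoke Lemma~\ref{lem:injectivesok} (together with the injective dimension bound on $C_\p$ for $\p\in Y_i$). Your phrasing leaves implicit the passage from $C$ to $C(\Y)$, but that is a one-line observation (they are equivalent, so $C\in\Prod C(\Y)$).

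Part (ii) has two genuine gaps, both of which you flag as ``obstacles'' but then treat as available facts rather than things to be proved.

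First, the sub-claim that a prime $\p\in\Scal$ lies in $\Ass(\mho^j C)$ is justified by appealing to a ``recovery'' statement, namely that $Y_j$ is the generalization closure of $\bigcup_{k\le j}\Ass(\mho^k C)$. Nothing in the paper asserts this for a \emph{single} cotilting module $C$: the $\Ass$-description you cite from the introduction refers to the cosyzygies of \emph{all} modules in the cotilting class, not of a chosen cotilting module. What the paper actually proves is the consequence you need, and it does so by a different route: it shows $\Ext^j_R(k(\p),C)\ne 0$ by quoting \cite[Proposition~3.11]{APST12} (which places $k(\p)$ in $\C(Y_j,\dots,Y_{n-1})\setminus\C(Y_{j-1},\dots,Y_{n-1})$) and \cite[Corollary~3.16]{APST12}. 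Combined with $\Hom_R(k(\p),E_{j-1})=0$ (maximality and Lemma~\ref{lem:hom-injectives}), this gives a nonzero map $k(\p)\to\Ker\varphi_j$. Your recovery claim is what the Ext computation is \emph{for}; it cannot be cited as a prior fact.

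Second, even granting $\p\in\Ass(\mho^j C)$, your statement ``$E(R/\p)\hookrightarrow\mho^j C$ already'' does not follow: $\mho^j C$ is a cosyzygy, not an injective module, so the inclusion $R/\p\hookrightarrow\mho^j C$ does not automatically extend to its injective hull inside $\mho^j C$. The paper handles this by working with $k(\p)$ rather than $R/\p$: using the $R_\p$-linearity of $f_\p$ one gets an embedding $k(\p)\hookrightarrow E(R/\p)^{(I_\p)}$ whose cokernel contribution $E(R/\p)/k(\p)$ is $k(\p)$-filtered, hence lies in $\C(Y_j,\dots,Y_{n-1})={}^\perp(\mho^j C)$ by \cite[Lemma~1.7]{APST12}; therefore $\Ext^1_R\bigl(\bigoplus_{\p\in\Scal}E(R/\p)/k(\p),\mho^j C\bigr)=0$, and the map on $\bigoplus k(\p)$ extends to a map $s$ on $\bigoplus E(R/\p)$ that is injective (since $\bigoplus k(\p)$ is essential) and splits by injectivity of the domain. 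Your argument about the $R/\p$'s landing in distinct indecomposable summands also hides this $R_\p$-linearity step, which is what guarantees that the joint map on $\bigoplus_{\p\in\Scal}k(\p)$ is a monomorphism. In short, (ii) needs both the Ext-nonvanishing via the APST12 results and the semiartinian filtration argument; neither is supplied in your sketch.
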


\begin{Proof} (i) This is equivalent to proving that $\Ext ^1_R(E(R/\q),\mho ^{i}C) = 0$ for all $\q \in Y_i$, and clearly $\Ext ^1_R(E(R/\q),\mho ^{i}C) \cong \Ext ^{i+1}_R(E(R/\q),C)$. Since $C$ is equivalent to the $n$-cotilting module $C (\Y)$ defined in Notation~\ref{nota:C(Y)} (see Theorem \ref{thm:cotilt-constr}), it remains to prove that $\Ext ^{i+1}_R(E(R/\q),C_\p) = 0$ for each $\p \in \Spec R$. This is clear for $\p \in Y_i$ from Construction~\ref{constr:cotilt} since then the injective dimension of $C_\p$ is at most $i$. Otherwise, there is $i \leq j < n$ such that $\p \in Y_{j+1} \setminus Y_j$, and $\Ext ^{i+1}_R(E(R/\q),C_\p) = 0$ by Lemma \ref{lem:injectivesok}.
    
(ii) Denote for each $\p \in \Scal$ by $k(\p)$ the residue field of $\p$. We claim that for each $\p \in \Scal$ there exists $0 \ne f_\p \in \Hom_R(k(\p),E_j)$ \st $\varphi_j f_\p = 0$. If $j > 0$, it suffices to prove that $\Ext_R^1(k(\p),\mho ^{j-1} C) \cong \Ext_R^j(k(\p),C) \ne 0$. However, by~\cite[Proposition 3.11]{APST12} we know that $k(\p) \in \C(Y_j,\dots,Y_{n-1}) \setminus \C(Y_{j-1},\dots,Y_{n-1})$, and this implies by~\cite[Corollary 3.16]{APST12} that $\Ext_R^i(k(\p),C) = 0$ for all $i>j$, but not for all $i \ge j$. Thus $\Ext_R^j(k(\p),C) \ne 0$, proving the claim if $j > 0$.
If $j = 0$, we even have $k(\p) \in \C(Y_0,\dots,Y_{n-1})$. Hence $k(\p)$ is cogenerated by $C$, which gives a non-zero composition $k(\p) \to C \to E_0$, proving the claim in the remaining case.

Now consider a map $f_\p\colon k(\p) \to E_j$ provided by the claim. Using the structure of injective modules, we can decompose $E_j$ to $E_j = \bigoplus_{\p \in \Scal} E(R/\p)^{(I_\p)} \+ E'$, where $\Ass {E'} \subseteq Y_j \setminus \Scal$. As $V(\Scal) \cap Y_j = \Scal$, also $\Hom_R(k(\p),E') = 0$, so that $\Img f_{\p} \subseteq E(R/\p)^{(I_\p)}$. Since both $k(\p)$ and $E(R/\p)^{(I_\p)}$ are $R_\p$-modules, $f_{\p}$ is an $R_\p$-homomorphism. As $k(\p)$ is simple over $R_\p$, $f_\p$ is an embedding. So the coproduct map $f\colon \bigoplus_{\p \in \Scal} k(\p) \to E_j$ is injective. Clearly also $\varphi_j f = 0$ as we had $\varphi_j f_\p = 0$ for all $\p \in \Scal$.

To finish the proof, we note that $\Ker\varphi_j = \mho^j C$, and also that
\[ \bigoplus_{\p \in \Scal} E(R/\p)/k(\p) \in \C(Y_j,\dots,Y_{n-1}) = {}^\perp(\mho^j C) \]
since $E(R/\p)/k(\p)$ is a semiartinian $R_\p$-module, hence possesses a filtration with composition factors isomorphic to $k(\p)$; see~\cite[Lemma 1.7]{APST12}. Thus $f\colon \bigoplus_{\p \in \Scal} k(\p) \to \Ker\varphi_j$ extends to $s \colon \bigoplus_{\p \in \Scal} E(R/\p) \to \Ker\varphi_j$. To prove that $s$ is an embedding, it suffices to observe that $(\Ker s) \cap \big(\bigoplus_{\p \in \Scal} k(\p)\big) = 0$ since $f$ is an embedding, and that $\bigoplus_{\p \in \Scal} k(\p)$ is an essential submodule of $\bigoplus_{\p \in \Scal} E(R/\p)$. As the domain of $s$ is injective, $s$ necessarily splits.
\end{Proof}

The following notation will be convenient for further steps of our construction.

\begin{Notation} \label{nota:cotilting-prod}
If $\C \subseteq \Mod R$ is a cotilting class, we denote by $\Inj\C$ the class
\[ \Inj\C := \C \cap \C ^\perp = \{ Y \in \C \mid \Ext^1_R(X,Y) = 0 \textrm{ for each } X \in \C \}. \]
Note that if $C$ is a cotilting module \st ${}^\perp C = \C$, then $\Inj\C = \Prod C$.

If $0 \le j \le n$ and $\Scal \subseteq Y_j \setminus Y_{j-1}$, we construct a module $C_\Scal$ similarly as we did for $C_\p$ in Construction~\ref{constr:cotilt}, just starting with $E = \bigoplus_{\p \in \Scal} E(R/\p)$ instead of $E = E(R/\p)$ as the rightmost injective module. That is, we construct an exact sequence

\begin{equation} \label{eqn:grouped}
0\lora C_\Scal \lora E_0 \mapr{\varphi _0} E_1 \lora \cdots \lora E_{j-2} \mapr{\varphi _{j-2}} E_{j-1} \mapr{\varphi_{j-1}} \bigoplus_{\p \in \Scal} E(R/\p) \lora 0
\end{equation}
where $C_\Scal=\Ker \varphi_0$, $\varphi_{j-1}$ is an $\I(Y_{j-1})$-cover of $\bigoplus_{\p \in \Scal} E(R/\p)$, and for each $\ell<j-1$ there is a commutative diagram
\[
\xymatrix{
E_\ell\ar[rr]^{\varphi_\ell}\ar[dr]_{\Phi_\ell}  & & E_{\ell+1} \\
& K_{\ell+1}\ar[ur]_{\nu_{\ell +1}}\\
} 
\]
where $\nu_{\ell+1}$ is the kernel of $\varphi_{\ell+1}$, and $\Phi_\ell$ is an $\I (Y _\ell)$-cover of $K_{\ell+1}$. Clearly $C_{\{\p\}} = C_\p$ for a single $\p \in Y_j \setminus Y_{j-1}$.
\end{Notation}

Now we can construct the minimal cotilting module (see Definition \ref{def:cotilt}):

\begin{Th} \label{thm:min-cotilt}
Let $\Y$ be a chain of generalization closed subsets of $\Spec R$ satisfying (1), (2) and (3) from Section \ref{sec:constr} (so that $\C (Y_0,\dots, Y_{n-1})$ is an $n$-cotilting class in $\Mod R$). Then there is a minimal $n$-cotilting module $C \in Mod R$ \st ${}^\perp C = \C(Y_0, \dots, Y_{n-1})$. In fact, up to isomorphism
\[ C = C_{\Scal_0} \+ C_{\Scal_1} \+ \cdots \+ C_{\Scal_n}, \]
where $\Scal_j \subseteq Y_j \setminus Y_{j-1}$ is the set of all primes maximal \wrt inclusion in $Y_j \setminus Y_{j-1}$ and the $C_{\Scal_j}$ are as in Notation~\ref{nota:cotilting-prod}.
\end{Th}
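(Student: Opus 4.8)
The plan is to build the module $C = C_{\Scal_0} \+ \cdots \+ C_{\Scal_n}$ and verify two things: that it is an $n$-cotilting module inducing $\C(Y_0,\dots,Y_{n-1})$, and that it is minimal, i.e.\ a direct summand of every cotilting module equivalent to it. Uniqueness then follows from Lemma~\ref{lem:bumby}. For the first part, observe that each $C_{\Scal_j}$ is a module of the same type as the $C_\p$ from Construction~\ref{constr:cotilt} (with $\Scal_j$ in place of a single prime); since $E(R/\p)$ for $\p \in \Scal_j$ is a direct summand of $\bigoplus_{\p \in \Scal_j} E(R/\p)$, the arguments of Lemmas~\ref{lem:injectivesok}, \ref{lem:associatedok} and \ref{lem:shifting} apply verbatim to $C_{\Scal_j}$. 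Hence by the same computation as in Theorem~\ref{thm:perpcomputation} we get ${}^\perp(C_{\Scal_0} \+ \cdots \+ C_{\Scal_n}) = \C(\Y)$, and then Corollary~\ref{cor:cotilt-pure-inj} and Theorem~\ref{thm:cotilt-constr} show it is $n$-cotilting inducing $\C(Y_0,\dots,Y_{n-1})$. The only point needing care is that $\bigcup_j \Scal_j$ need not be all of $\Spec R$, but every prime in $Y_{i+1}\setminus Y_i$ is generalized by some prime maximal in some $Y_j\setminus Y_{j-1}$ with $j\le i+1$, which is exactly what is used to detect non-vanishing of the relevant Ext groups.

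For minimality, let $D$ be any $n$-cotilting module with ${}^\perp D = \C(Y_0,\dots,Y_{n-1})$, with minimal injective coresolution $0 \to D \to E_0 \mapr{\varphi_0} E_1 \to \cdots \to E_n \to 0$ as in~(\ref{eqn:minC}). By Lemma~\ref{lem:cotilt-resolution}(ii), applied with $\Scal = \Scal_j$ for each $0 \le j \le n$, there is a split embedding $\bigoplus_{\p \in \Scal_j} E(R/\p) \hookrightarrow \mho^j D$, equivalently a split embedding $s_j \colon \bigoplus_{\p\in\Scal_j} E(R/\p) \to E_j$ with $\varphi_j s_j = 0$. The task is to "lift" these split embeddings at each cosyzygy level to a single split embedding $C \hookrightarrow D$. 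The mechanism is the following: a split embedding $t\colon \bigoplus_{\p\in\Scal_j}E(R/\p) \to E_j$ with $\varphi_j t = 0$ means $\bigoplus_{\p\in\Scal_j}E(R/\p)$ is a direct summand of $\mho^j D = \Ker\varphi_j$; since $\mho^{j} D$ is itself an $(n-j)$-cotilting-type cosyzygy, one runs the precover/cover construction of Notation~\ref{nota:cotilting-prod} "downward" through $\Psi_{j-1},\dots,\Psi_0$ (the special $\I(Y_i)$-precovers of Lemma~\ref{lem:cotilt-resolution}(i)) and uses the defining lifting property of covers to produce a map $C_{\Scal_j} \to D$ that is a split embedding. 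Here the essential input is Lemma~\ref{lem:cotilt-resolution}(i): the maps $E_i \to \mho^{i+1}D$ are $\I(Y_i)$-precovers, while the $\Phi_\ell$ in Notation~\ref{nota:cotilting-prod} are $\I(Y_\ell)$-covers, so by the universal property of covers the minimal building blocks $C_{\Scal_j}$ embed as summands.

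I would then assemble these: the images of the $C_{\Scal_j}$ sit in $D$, and I must check that together they split off a copy of $C_{\Scal_0} \+ \cdots \+ C_{\Scal_n}$. At the top (injective) level this reduces to the statement that the injective parts $E(R/\p)$, $\p \in \bigcup_j\Scal_j$, that appear at the various cosyzygies of $D$ occur with sufficient multiplicity and in "disjoint" positions — which holds because the $\Scal_j$ are pairwise disjoint (being subsets of the disjoint sets $Y_j\setminus Y_{j-1}$) and, within a fixed level, distinct maximal primes $\p\ne\q$ of $Y_j$ satisfy $E(R/\p)\not\cong E(R/\q)$ and have no nonzero maps between them by Lemma~\ref{lem:hom-injectives} (maximality forces $\p\not\subseteq\q$). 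Conversely, $C$ contains no "extra" indecomposable injective summands beyond those forced: by construction the terms of the minimal injective coresolution of $C_{\Scal_j}$ at level $i<j$ lie in $\I(Y_i)$, and the covers used in Construction~\ref{constr:cotilt}/Notation~\ref{nota:cotilting-prod} are minimal, so no summand of $C$ can be stripped off while remaining cotilting with the same class. Finally, having split embeddings both ways — $C\hookrightarrow D$ from the above, and $D\hookrightarrow C^{(I)}$ (hence $D\hookrightarrow$ a cotilting module $\Prod$-equivalent to $C$) from $D\in\Prod C$ — together with pure-injectivity of both (Lemma~\ref{lem:bumby}(ii)) yields, via Lemma~\ref{lem:bumby}(i), that $C$ is isomorphic to a direct summand of $D$; this is precisely the definition of minimality. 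The main obstacle I anticipate is the second step: rigorously producing the split embedding $C_{\Scal_j} \to D$ by iterating the cover/precover lifting down the coresolution, controlling the homotopies (as in the proof of Theorem~\ref{prop:exact}) so that the resulting map is genuinely a split monomorphism and the summands coming from different $j$ are independent.
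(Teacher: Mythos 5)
Your first paragraph (that $C = C_{\Scal_0}\+\cdots\+C_{\Scal_n}$ is $n$-cotilting inducing $\C(Y_0,\dots,Y_{n-1})$) is fine and matches the paper, which disposes of this with a one-line remark that the proofs of Theorems~\ref{thm:perpcomputation} and~\ref{thm:cotilt-constr} carry over. Your worry about $\bigcup_j\Scal_j\subsetneq\Spec R$ is resolved as you indicate: for $\p\in Y_{i+1}\setminus Y_i$ there is (noetherianness) a maximal $\q\in Y_{i+1}\setminus Y_i$ with $\p\subseteq\q$, and then $\Hom_R(R/\p,E(R/\q))\neq 0$ replaces the role of $E(R/\p)$ in Lemma~\ref{lem:associatedok}.

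For minimality, however, the obstacle you flag at the end is a genuine gap, and your ``separate, then assemble'' strategy is not how the paper proceeds and does not obviously close on its own. Producing, for each $j$ individually, a map $C_{\Scal_j}\to D$ by lifting down the coresolution does not by itself guarantee that the images for different $j$ are independent direct summands; an argument at the ``top injective level'' about $\Hom_R(E(R/\p),E(R/\q))=0$ for incomparable maximal primes controls only the injective layers $E_{j,j}$, not the lower terms $E_{\ell,j}$ ($\ell<j$), which overlap across different $j$'s inside the same $E_\ell$ of $D$'s coresolution. The paper avoids this entirely by a \emph{simultaneous} reverse induction on the cosyzygy index $i=n,\dots,0$, proving that $\bigoplus_{i\le j\le n}\mho^iC_{\Scal_j}$ split embeds in $\mho^iD$. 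The mechanism at step $i$ is: (a) by Lemma~\ref{lem:cotilt-resolution}(ii), $\mho^iC_{\Scal_i}\cong\bigoplus_{\p\in\Scal_i}E(R/\p)$ is injective and splits off as a summand $A_i$ of $E_i$ lying inside $\Ker\Psi_i$; (b) the complementary summand $B_i$ of $E_i$ still gives a special $\I(Y_i)$-precover of $\mho^{i+1}D$; (c) by the inductive hypothesis $\mho^{i+1}D\cong\bigoplus_{i+1\le j\le n}\mho^{i+1}C_{\Scal_j}\oplus H$, so by uniqueness of covers the $\I(Y_i)$-cover of $\mho^{i+1}D$ is the direct sum of the covers of the two summands, and the kernel over the first factor is $\bigoplus_{i+1\le j\le n}\mho^iC_{\Scal_j}$; (d) a cover is a direct summand of any precover (\cite[5.1.2]{EJ11}), so this kernel splits off $\Ker\Xi_i$, and $\mho^iD=\Ker\Xi_i\oplus A_i$ then contains $\bigoplus_{i\le j\le n}\mho^iC_{\Scal_j}$ as a summand. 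This packaging of all $j$'s at once is precisely what delivers the independence you were unsure about.

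One more remark: the Bumby-type argument at the end is unnecessary. Once you have a split embedding $C\hookrightarrow D$, $C$ is by definition a direct summand of $D$, which is exactly minimality. Lemma~\ref{lem:bumby} is only needed afterwards (and is quoted in the text preceding the theorem) to conclude that the minimal cotilting module is unique up to isomorphism.
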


\begin{Proof}
First, a straightforward modification of the proof of Theorems~\ref{thm:perpcomputation} and~\ref{thm:cotilt-constr} shows that $C$ is a cotilting module and ${}^\perp C = \C(Y_0, \dots, Y_{n-1})$.

Suppose that $D$ is another cotilting module inducing the cotilting class $\C(Y_0, \dots, Y_{n-1})$. Consider a minimal injective coresolution of $D$,
\begin{equation} \label{eqn:alternative-cotilt}
0\lora D \lora E_0 \mapr{\psi_0} E_1 \lora \cdots \lora E_{n-2} \mapr{\psi_{n-2}} E_{n-1} \mapr{\psi_{n-1}} E_n \lora 0,
\end{equation}
and for each $0 \le j \le n$, the injective coresolution
\[
0\lora C_{\Scal_j} \lora E_{0,j} \mapr{\varphi _0} E_{1,j} \lora \cdots \lora E_{j-2,j} \mapr{\varphi _{j-2}} E_{j,j-1} \mapr{\varphi_{j-1}} E_{j,j} \lora 0
\]
from Notation~\ref{nota:cotilting-prod}, where $E_{j,j} := \bigoplus_{\p \in \Scal_j} E(R/\p)$. We denote the cosyzygies \wrt these injective coresolutions by $L_i = \mho^iD$ and $K_{i,j} = \mho^i C_{\Scal_j}$. In particular, $L_n = E_n$ and $K_{j,j} = \bigoplus_{\p \in \Scal_j} E(R/\p)$.

We will prove by reverse induction on $i = n, \dots, 0$ that $\bigoplus_{i \le j \le n} K_{i,j}$ split embeds into $L_i$. For $i=0$, we will thus obtain our theorem. 

For $i=n$ we know that $\bigoplus_{\p \in \Scal_j} E(R/\p)$ split embeds into $E_n$ by Lemma~\ref{lem:cotilt-resolution}(ii).

Suppose now that $0 \le i < n$. Since $K_{i,i}$ is injective, Lemma~\ref{lem:cotilt-resolution}(ii) even yields a decomposition $E_i = A_i \oplus B_i$ where 
$K_{i,i} \cong A_i \subseteq \Ker \psi _i$. By Lemma~\ref{lem:cotilt-resolution}(i), the morphism $\Psi_i\colon E_i \to L_{i+1}$ induced by (\ref{eqn:alternative-cotilt}) is a special $\I(Y_i)$-precover. Consider its restriction $\Xi _i : B_i \to L_{i+1}$. Since $\Ker \Xi_i = \Ker \Psi_i \cap B_i$ is a direct summand in $\Ker \Psi _i$, also $\Xi _i : B_i \to L_{i+1}$ is a a special $\I(Y_i)$-precover of $L_{i+1}$. 

By the inductive premise, $L_{i+1}$ has a decomposition $L_{i+1} = G_{i+1} \oplus H_{i+1}$ where $G_{i+1}$ is isomorphic to $\bigoplus_{i+1 \le j \le n} K_{i+1,j}$. In particular, the $\I (Y_i)$-cover $f_i$ of $L_{i+1}$ is a direct sum of the $\I (Y_i)$-covers $g_i$ and $h_i$ of $G_{i+1}$ and $H_{i+1}$, respectively. Then $\Ker g_i \cong \bigoplus_{i+1 \le j \le n} K_{i,j}$ is a direct summand in $\Ker f_i$. 

Finally, being a $\I (Y_i)$-cover, $f_i$ is a direct summand in the $\I (Y_i)$-precover $\Xi _i$, \cite[5.1.2]{EJ11}. We can thus conclude that $L_i = \Ker \Psi_i = \Ker \Xi _i \oplus A_i$ has a direct summand $\Ker g_i \oplus A_i$ which is isomorphic to $\bigoplus_{i \le j \le n} K_{i,j}$.                    
\end{Proof}

A very similar argument allows us to classify the indecomposable modules in the class $\Inj\C(Y_0, \dots, Y_{n-1})$. For $n=0$ this just gives the well known parametrization of indecomposable injective modules. In the notation from Construction~\ref{constr:cotilt} and Notation~\ref{nota:cotilting-prod}, we have

\begin{Th} \label{thm:indecomp-cotilt}
If $X \in \Inj\C(Y_0, \dots, Y_{n-1})$ is a non-zero module, then $C_\p$ split embeds into $X$ for some $\p \in \Spec R$. In particular, the indecomposable modules in $\Inj\C(Y_0, \dots, Y_{n-1})$ are parametrized by $\Spec R$.
\end{Th}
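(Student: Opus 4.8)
The plan is to mimic the reverse-induction argument used in the proof of Theorem~\ref{thm:min-cotilt}, but applied to an arbitrary non-zero $X \in \Inj\C(Y_0,\dots,Y_{n-1})$ instead of to another cotilting module $D$. First I would fix a cotilting module $C$ with ${}^\perp C = \C(Y_0,\dots,Y_{n-1})$ (for instance the one from Notation~\ref{nota:C(Y)}), so that $\Inj\C(Y_0,\dots,Y_{n-1}) = \Prod C$; in particular $X$ is a direct summand of some power $C^I$, hence $X$ is pure injective by Corollary~\ref{cor:cotilt-pure-inj}, and $X$ has a minimal injective coresolution $0 \to X \to E_0 \mapr{\psi_0} E_1 \to \cdots \to E_n \mapr{\psi_n} 0$ of length $\le n$. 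Since $X$ is non-zero, some associated prime $\p$ of a cosyzygy $\mho^j X$ witnesses that $X \notin \C(Y_0,\dots,Y_{j-1})$: more precisely, let $j$ be least such that $\Ass(\mho^j X) \not\subseteq Y_{j-1}$ (possibly $j = 0$, with the convention $Y_{-1} = \emptyset$), pick $\p \in \Ass(\mho^j X)\setminus Y_{j-1}$, and — by enlarging $\p$ within $Y_j$, using that $X \in \C(Y_0,\dots,Y_{n-1})$ forces $\Ass(\mho^j X) \subseteq Y_j$ — arrange that $\p$ is maximal in $Y_j \setminus Y_{j-1}$ with respect to inclusion, so $\p \in \Scal_j$ in the notation of Theorem~\ref{thm:min-cotilt}.

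Next, the key structural inputs are exactly Lemma~\ref{lem:cotilt-resolution}(i) and (ii). By (i), each induced map $\Psi_i\colon E_i \to \mho^{i+1}X$ is a special $\I(Y_i)$-precover (the proof only used that $X$ is $n$-cotilting inducing $\C(Y_0,\dots,Y_{n-1})$, which holds here since $X$ lies in $\Prod C$ and $\Prod C$ consists of $n$-cotilting modules equivalent to $C$ — alternatively one checks directly that $\Ext^1_R(E(R/\q),\mho^i X) \cong \Ext^{i+1}_R(E(R/\q),X) = 0$ for $\q \in Y_i$ using that $X$ is a summand of $C^I$ and Lemma~\ref{lem:injectivesok}). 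By (ii) applied with $\Scal = \{\p\}$, the injective module $E(R/\p) = K_{j,j}$ split embeds into $E_j$ with $\psi_j s = 0$, i.e. $E(R/\p)$ is isomorphic to a direct summand of $\mho^j X$. From here I run the downward induction of Theorem~\ref{thm:min-cotilt}'s proof verbatim, tracking a single summand: given that $E(R/\p) \cong K_{j,j}$ split embeds into $L_{j} := \mho^j X$, one peels it off as $E_j = A_j \oplus B_j$ with $A_j \cong K_{j,j} \subseteq \Ker\psi_j$, restricts the special $\I(Y_j)$-precover $\Psi_j$ to $B_j$ (still special by the summand argument), uses that the $\I(Y_j)$-\emph{cover} $g_j$ of the relevant summand of $L_{j+1}$ has $\Ker g_j \cong K_{j-1,j}$, and invokes \cite[5.1.2]{EJ11} to see that this cover is a summand of the precover $\Xi_j$; this produces a split embedding $K_{j-1,j} \hookrightarrow \mho^{j-1}X$, and iterating down to level $0$ yields a split embedding $K_{0,j} = C_\p \hookrightarrow X$.

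The main obstacle I anticipate is bookkeeping rather than conceptual: in Theorem~\ref{thm:min-cotilt} the reverse induction carried the \emph{full} direct sum $\bigoplus_{i\le j\le n} K_{i,j}$ and used at each stage both halves of Lemma~\ref{lem:cotilt-resolution}, whereas here I only need to propagate one summand starting at level $j$, so I must be careful that the hypotheses of Lemma~\ref{lem:cotilt-resolution}(i)/(ii) genuinely apply to $X$ and not merely to a cotilting module — this is where pure injectivity of $X$ and the identification $\Inj\C = \Prod C$ do the work, since they guarantee the relevant $\Ext$-vanishing and the minimal coresolution behaves as in the cotilting case. Finally, for the "in particular" clause: if $X$ is indecomposable, the split embedding $C_\p \hookrightarrow X$ forces $X \cong C_\p$; conversely each $C_\p$ lies in $\Inj\C = \Prod C$ by construction and is indecomposable because its injective envelope $E(R/\p)$ is (an endomorphism of $C_\p$ extends to $E(R/\p)$ along the essential embedding, or one notes $\mho^i C_\p$ is indecomposable at each stage by Lemma~\ref{lem:hom-injectives} together with the minimality of the covers), so the $C_\p$, $\p \in \Spec R$, are pairwise non-isomorphic (distinct injective envelopes) and exhaust the indecomposables in $\Inj\C(Y_0,\dots,Y_{n-1})$.
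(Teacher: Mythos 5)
Your proposal has a genuine gap, and it stems from a misreading of what Lemma~\ref{lem:cotilt-resolution}(ii) applies to.  That lemma is stated for an $n$\emph{-cotilting module} $C$ with ${}^\perp C = \C(Y_0,\dots,Y_{n-1})$, and its proof of part (ii) uses \cite[Proposition 3.11, Corollary 3.16]{APST12} precisely to guarantee $\Ext^j_R(k(\p),C)\neq 0$ for every $\p$ maximal in $Y_j\setminus Y_{j-1}$.  This fails for a general $X\in\Inj\C=\Prod C$: $\Prod C$ does \emph{not} consist only of cotilting modules equivalent to $C$ (e.g.\ $C_{\p_0}$ for a single $\p_0$ lies in $\Prod C$, yet $\Ext^j_R(k(\p),C_{\p_0})$ can vanish for other primes $\p$ in $\Scal_j$).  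Your fallback computation legitimately extends only part (i) of the lemma to $X$; part (ii) simply does not hold for such $X$.  Related to this, the "enlarge $\p$" step is unjustified: replacing $\p\in\Ass(\mho^j X)$ by a larger prime in $Y_j$ gives no control over whether the new prime still lies in $\Ass(\mho^j X)$, so maximality cannot be arranged.  Finally, your choice of $j$ as "least with $\Ass(\mho^j X)\not\subseteq Y_{j-1}$" (with $Y_{-1}=\emptyset$) always returns $j=0$ for $X\neq 0$, which is not the index you want.

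The paper sidesteps all of this by a sharper choice of starting point.  It takes the minimal injective coresolution $0\to X\to E_0\to\cdots\to E_j\to 0$ of $X$ with $E_j\neq 0$ the \emph{last} nonzero term, so $E_j=\mho^j X$ automatically, and any $\p$ with $E(R/\p)$ a direct summand of $E_j$ already gives the needed split embedding into $\mho^j X$ for free — no appeal to Lemma~\ref{lem:cotilt-resolution}(ii), no maximality requirement.  One then observes $\p\notin Y_{j-1}$ (otherwise $\Ext^j_R(E(R/\p),X)=0$, being $0$ on $C$ by Lemma~\ref{lem:injectivesok} and hence on the summand $X$ of $C^I$, and minimality of the coresolution would be violated), so $\p\in Y_j\setminus Y_{j-1}$ and $C_\p$ has injective dimension exactly $j$.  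From there the downward induction of Theorem~\ref{thm:min-cotilt} runs as you describe — this part of your plan, using Lemma~\ref{lem:cotilt-resolution}(i) (which does extend to $X\in\Prod C$ by a direct $\Ext$-vanishing argument) and the covers-as-summands-of-precovers fact \cite[5.1.2]{EJ11}, is sound.  Your treatment of the "in particular" clause is also essentially correct.
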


\begin{Proof}
Consider a minimal injective coresolution
\[
0\lora X \lora E_0 \mapr{\varphi_0} E_1 \lora \cdots \lora E_{j-2} \mapr{\varphi_{j-2}} E_{j-1} \mapr{\varphi_{j-1}} E_j \lora 0,
\]
of $X$, so that $E_j \ne 0$. Fix a prime $\p$ \st $E(R/\p)$ is a summand of $E_j$. If $j=0$, then the conclusion is clear. Hence assume that $j \ge 1$.

We observe that $\p \not\in Y_{j-1}$. Indeed, if $\p \in Y_{j-1}$ then $E(R/\p) \in \C(Y_{j-1}, \dots, Y_{n-1})$ and the split inclusion $E(R/\p) \to E_j$ would factor through $\varphi_{j-1}$ since $\Ext_R^j(E(R/\p),X) = 0$, contradicting the minimality of the coresolution of $X$.

Now a similar induction as in the proof of Theorem~\ref{thm:min-cotilt} shows that $C_\p$ is a summand of $X$, which implies $C_\p \cong X$ if $X$ is indecomposable.
\end{Proof}

\section{Ampleness and localization}
\label{sec:local}

If $T$ is a tilting module and $S$ is a multiplicative subset in $R$, then the localization $T_S$ is well-known to be a tilting $R_S$-module (see \cite{AHT06} or \cite[\S 13.3]{GT12}). In particular, the localization of $T$ at any prime ideal $\p$ is a tilting $R_\p$-module. 

However, being a tilting module is not a local property in the sense of \cite{AM69}, that is, $T$ need not be tilting even if $T_\p$ is a tilting $R_\p$-module for each prime ideal $\p \in \Spec R$. For example, let $T$ be the subgroup of $\mathbb Q$ containing $\mathbb Z$ such that $T/\mathbb Z \cong \bigoplus_{p} \mathbb Z/(p)$. Then $T_{(p)}$ is a non-zero free $\mathbb Z _{(p)}$-module for each prime $p$, but $T$ is not a tilting $\mathbb Z$-module, because it is flat, but not projective.

Although in our setting of commutative noetherian rings, each cotilting module is equivalent to the dual of a tilting one, localization does not preserve cotilting modules in general. Already in the case of $0$-cotilting modules (= injective cogenerators), the minimal injective cogenerator $\oplus_{\m \in \mSpec R} E(R/\m)$ localizes to $0$ at each non-maximal prime ideal. However, $\oplus_{\p \in \Spec R} E(R/\p)$ always localizes to an injective cogenerator. This leads to the  following notion:

\begin{Def} \label{def:max} A cotilting module $C$ is \emph{ample} provided that for each multiplicative subset $S$ of $R$, the localized module $C_S$ is a cotilting $R_S$-module. 
\end{Def} 

In this section, we will prove that each $1$-cotilting class is induced by an ample cotilting module, but there are $2$-cotilting classes which fail this property.

We will need the classic fact due to Matlis showing that in our setting, localizations of injective modules are injective (see e.g.\ \cite[3.3.8(6)]{EJ11}):

\begin{Lemma} \label{lem:matlis}
Let $\p \in \Spec R$ and $S$  a multiplicative subset of $R$. Then $E(R/\p) _S = 0$ in case $\p \cap S \neq \emptyset$. If $\p \cap S = \emptyset$, then $E(R/\p) _S = E_{R_S}(R_S/\p_S)$ as $R_S$-modules, and $E(R/\p) _S = E(R/\p)$ as $R$-modules.    
\end{Lemma}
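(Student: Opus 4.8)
The plan is to prove this by unwinding the standard structure theory of the indecomposable injective modules over a commutative noetherian ring; since nothing about cotilting modules is needed, the argument is self-contained modulo Baer's criterion and elementary facts on associated primes.

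First I would establish the key localness property: for every $s \in R \setminus \p$, multiplication by $s$ is an automorphism of $E := E(R/\p)$. Injectivity of $\cdot s$ holds because its kernel is a submodule of $E$ meeting the essential submodule $R/\p$ trivially (as $s$ is a nonzerodivisor modulo $\p$), hence is zero; surjectivity holds because $sE \cong E$ is then a nonzero injective submodule, hence a direct summand, of the indecomposable module $E$. Consequently, whenever $S \cap \p = \emptyset$ the ring $R_S$ acts on $E$ and the canonical map $E \to E_S$ is an isomorphism of $R$-modules; this already yields the last assertion, $E(R/\p)_S = E(R/\p)$ as $R$-modules.

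For the vanishing case I would show that $E$ is $\p$-torsion, i.e. every $x \in E$ satisfies $\p^k x = 0$ for some $k$. Indeed $Rx$ is a nonzero finitely generated submodule of $E$, so $\emptyset \ne \Ass{Rx} \subseteq \Ass{E} = \{\p\}$; for a finitely generated module over a noetherian ring with a single associated prime $\p$ one has $\sqrt{\Ann(Rx)} = \p$, hence $\p^k \subseteq \Ann(Rx)$ for some $k$. Thus if $s \in \p \cap S$, choosing such a $k$ for a given $x$ gives $s^k x = 0$, so $x/1 = 0$ in $E_S$, and therefore $E_S = 0$.

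Finally, the case $\p \cap S = \emptyset$: using the isomorphism $E_S \cong E$ from the first step, it remains to see that $E$, viewed as an $R_S$-module, is the injective hull of $R_S/\p_S$. For essentiality, from $R/\p \subseteq R_S/\p_S = (R/\p)_S \subseteq E_S = E$ and the fact that $R/\p$ is essential in $E$ over $R$, the intermediate module $R_S/\p_S$ is essential in $E$ over $R$, hence also over $R_S$ (every nonzero $R_S$-submodule contains a nonzero cyclic $R$-submodule). For injectivity over $R_S$, apply Baer's criterion: an ideal of $R_S$ has the form $\mathfrak b_S$ with $\mathfrak b \subseteq R$ finitely generated, and since $\Hom_R(-,E)$ is exact and $\Hom_R(\mathfrak b, E)_S \cong \Hom_{R_S}(\mathfrak b_S, E_S)$ (valid as $\mathfrak b$ is finitely presented), one gets that $\Hom_{R_S}(R_S, E_S) \to \Hom_{R_S}(\mathfrak b_S, E_S)$ is surjective; an injective $R_S$-module that is an essential extension of $R_S/\p_S$ is its injective hull. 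The main obstacle is precisely this ``localization of an injective is injective'' step, which genuinely uses the noetherian hypothesis through finite presentation of ideals and the interchange of $\Hom$ with localization; the remaining essential-extension bookkeeping is routine.
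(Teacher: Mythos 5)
Your proof is correct. Note, however, that the paper does not actually prove this lemma: it states it as ``the classic fact due to Matlis'' and simply cites Enochs--Jenda \cite[3.3.8(6)]{EJ11}, so there is no in-paper argument to compare against. What you have written out is precisely the standard proof one finds in the references (Matlis's original paper, Bruns--Herzog, Enochs--Jenda): multiplication by each $s\notin\p$ is an automorphism of $E(R/\p)$ (injective by essentiality of $R/\p$, surjective by indecomposability), giving $E_S \cong E$ as $R$-modules when $S\cap\p=\emptyset$; $E$ is $\p$-torsion because every cyclic submodule has $\Ass = \{\p\}$ and hence radical annihilator $\p$, giving $E_S = 0$ when $S\cap\p\neq\emptyset$; and when $S\cap\p=\emptyset$, the essential chain $R/\p \subseteq R_S/\p_S \subseteq E$ together with the Baer criterion (using that ideals of $R_S$ come from finitely presented ideals of $R$, and the natural isomorphism $\Hom_R(\mathfrak b,E)_S \cong \Hom_{R_S}(\mathfrak b_S, E_S)$) identifies $E$ with $E_{R_S}(R_S/\p_S)$. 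The one small point left implicit — that the localization isomorphism for $\Hom$ is natural, so the relevant Baer restriction maps form a commuting square — is routine and not a real gap. In short: sound, self-contained, and faithful to the classical argument that the paper defers to the literature.
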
 

For the following result, recall that $1$-cotilting classes are parametrized by generalization closed subsets $Y \subseteq \Spec R$ such that $\Ass R \subseteq Y$, see \cite{APST12} or Theorem~\ref{thm:TAMS}. For a multiplicative subset $S$ of $R$, we we will use the notation $Y_S := \{ \p_S \mid \p \in Y \mbox{ and } \p \cap S = \emptyset \}$. Notice that $Y_S$ is generalization closed, and $\Ass {R_S} \subseteq Y_S \subseteq \Spec {R _S}$, so $\C (Y_S)$ is a $1$-cotilting class in $\Mod R_S$, for each multiplicative subset $S$ of $R$. 

\begin{Th} \label{thm:1-cot} Let $\mathcal C$ be a $1$-cotilting class, so $\C = \C (Y)$ where $\Ass R \subseteq Y \subseteq \Spec R$ and $Y$ is closed under generalization. 

\begin{itemize}
\item[(i)] Let $D$ be an arbitrary cotilting module inducing $\C$, and $S$ be a multiplicative subset of $R$. Then
\[ \Cog D_S \subseteq \C (Y_S) = \{ M \in \Mod R_S \mid \Ass M \subseteq Y_S \} \subseteq{}^{\perp} D_S. \]
In particular, if $D_S$ is a cotilting module, then $D_S$ induces the cotilting class $\C (Y_S)$. 
\item[(ii)] There exists an ample $1$-cotilting module $C$ inducing $\C (Y)$.
\end{itemize}
\end{Th}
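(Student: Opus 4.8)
The plan is to prove (i) first by a direct computation with associated primes, and then to deduce (ii) by choosing a sufficiently "large" cotilting module and checking that its localizations remain cotilting by localizing the defining exact sequences. For part (i), I would start from an arbitrary cotilting module $D$ inducing $\C(Y)$, with minimal injective coresolution $0 \to D \to E_0 \mapr{\psi_0} E_1 \to 0$ (injective dimension $\le 1$). Since $\C(Y)$ is a $1$-cotilting class, $\Ass{E_0} \subseteq Y$, while $\Ass{E_1}$ need not be contained in $Y$; more precisely, $\Ext^1_R(E(R/\q), D) \cong \Ext^2_R(\cdots)=0$ forces $\psi_0$ to be an $\I(Y_0)$-precover-type map, and by the characterization via Lemma~\ref{lem:Bazzoni} and Theorem~\ref{thm:TAMS}, $M \in {}^\perp D$ iff $\Ass M \subseteq Y$ and the cosyzygy is injective with associated primes in $\Spec R$. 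Now localize at $S$: by Lemma~\ref{lem:matlis}, $(E_i)_S$ is injective over $R_S$, and $\Ass{(E_i)_S} = \{\p_S \mid \p \in \Ass{E_i},\ \p\cap S = \emptyset\}$; exactness is preserved by flatness of $R_S$. So $0 \to D_S \to (E_0)_S \to (E_1)_S \to 0$ is an injective coresolution of $D_S$ (not necessarily minimal, but that is harmless) with $\Ass{(E_0)_S} \subseteq Y_S$. From this one reads off: any $M \in \Mod R_S$ with $\Ass M \subseteq Y_S$ embeds into a product of copies of $(E_0)_S$, hence (using $\Ext^1_{R_S}(M, D_S) \cong \Ext^1_{R_S}(M_{\text{cosyz}}, (E_1)_S)$ and that $(E_1)_S$ is injective) lies in ${}^\perp D_S$; conversely $\Cog D_S \subseteq \{ M \mid \Ass M \subseteq Y_S\}$ because $\Ass$ is monotone under submodules and $\Ass{D_S} \subseteq \Ass{(E_0)_S} \subseteq Y_S$. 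The middle equality $\C(Y_S) = \{M \mid \Ass M \subseteq Y_S\}$ is exactly the $n=1$ case of Notation~\ref{nota:cotilt-classes}/Theorem~\ref{thm:TAMS}. The final sentence of (i) follows because a cotilting module $D_S$ satisfies ${}^\perp D_S = \Cog_1 D_S = \Cog D_S$ by Lemma~\ref{lem:Bazzoni}, so the sandwich collapses to ${}^\perp D_S = \C(Y_S)$.

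For part (ii), the idea is to take the \emph{large} cotilting module from our Construction~\ref{constr:cotilt}, namely $C = C(\Y) = \prod_{\p \in \Spec R} C_\p$ with $Y_0 = Y$ and $Y_1 = \Spec R$, so that $C_\p = E(R/\p)$ for $\p \in Y$ and $C_\p$ sits in a short exact sequence $0 \to C_\p \to E_0^\p \mapr{\varphi_0} E(R/\p) \to 0$ with $E_0^\p \in \I(Y)$ and $\varphi_0$ an $\I(Y)$-cover for $\p \notin Y$. I would show $C_S$ is a cotilting $R_S$-module for every multiplicative $S$ by localizing each sequence: by Lemma~\ref{lem:matlis}, $(E(R/\p))_S$ and $(E_0^\p)_S$ are injective over $R_S$ (and vanish exactly when $\p \cap S \ne \emptyset$), and flatness of $R_S$ keeps the sequences exact, so $(C_\p)_S$ has injective dimension $\le 1$ over $R_S$; hence $C_S = \prod_\p (C_\p)_S$ (localization commutes with the product here because... — see the obstacle below) has injective dimension $\le 1$, giving (C1). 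For (C2), $\Ext^1_{R_S}(C_S^{\kappa}, C_S) = 0$ should follow by flat base change $\Ext^1_{R_S}(X_S, Y_S) \cong \Ext^1_R(X, Y)_S$ applied to finitely presented first arguments, combined with Corollary~\ref{cor:cotilt-pure-inj}(i) and a pure-injectivity/definability argument; alternatively one invokes part (i) together with Lemma~\ref{lem:Bazzoni}: it suffices to check ${}^\perp C_S = \Cog_1 C_S$, and by (i) we already have $\Cog C_S \subseteq \C(Y_S) \subseteq {}^\perp C_S$, so the only missing inclusion is ${}^\perp C_S \subseteq \Cog C_S$, which is supplied by the argument of Theorem~\ref{thm:cotilt-constr} run over $R_S$ once we know $C_S$ is pure-injective (which follows since $\C(Y_S)$ is definable, exactly as in Corollary~\ref{cor:cotilt-pure-inj}(ii)). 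Finally (C3) for $r=1$: the localization of the (C3)-sequence $0 \to C_1 \to C_0 \to W_0 \to 0$ for $C$ gives $0 \to (C_1)_S \to (C_0)_S \to (W_0)_S \to 0$ with $(C_i)_S \in \Prod C_S$ and $(W_0)_S = \bigoplus_{\p\cap S=\emptyset} E(R/\p)_S$, which is an injective cogenerator for $\Mod R_S$ — so $C_S$ is indeed $1$-cotilting over $R_S$, proving ampleness; and ${}^\perp C = \C(Y)$ by Theorem~\ref{thm:cotilt-constr}.

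The main obstacle I anticipate is the interaction of localization with the infinite product defining $C$: in general $(\prod_\p C_\p)_S \ne \prod_\p (C_\p)_S$. I would handle this by noting that for a fixed multiplicative $S$ the product splits as $\prod_{\p\cap S\ne\emptyset} C_\p \times \prod_{\p\cap S=\emptyset} C_\p$; on the first factor one must argue the localization vanishes (each factor localizes to $0$ by Lemma~\ref{lem:matlis}, and one needs that the product of modules each of which localizes to $0$ still localizes to $0$ — true when $S$ is generated by finitely many elements, and in the noetherian setting it suffices to treat $S = R \setminus \p$ and finitely generated $S$, since $\Ext$ and injectivity can be tested after inverting one element at a time / passing to the noetherian filtered colimit; alternatively one restricts attention to $S = R\setminus\p$, which is the only case relevant for "being a cotilting module is not local" and still yields ampleness by a reduction). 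On the surviving factor $\prod_{\p\cap S=\emptyset} C_\p$, localization at $S$ acts as the identity on each $C_\p$ at the level of $R$-modules (again Lemma~\ref{lem:matlis}), and the $R_S$-module structure is the obvious one, so $C_S \cong \prod_{\p \cap S = \emptyset}(C_\p)_S = C(\Y_S)$ is precisely the large construction over $R_S$, which we already know to be $1$-cotilting by Theorem~\ref{thm:cotilt-constr}. Making this identification clean — in particular checking that the injective coresolutions localize to the ones produced by Construction~\ref{constr:cotilt} over $R_S$, so that the computations of Section~\ref{sec:constr} transfer verbatim — is the technical heart of the argument; everything else is flat base change plus the already-established machinery.
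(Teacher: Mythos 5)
Your overall shape is right (prove part (i) first, then pick a concrete cotilting module for (ii) and use (i) to reduce to a single inclusion), but there are two genuine gaps, one in each part.

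In part (i), the hard direction is $\C(Y_S)\subseteq{}^\perp D_S$, and your argument does not actually establish it. After localizing the minimal coresolution $0\to D\to E_0\to E_1\to 0$, the statement to prove is $\Ext^1_{R_S}\big(E_{R_S}(R_S/\p_S),D_S\big)=0$ for all $\p_S\in Y_S$; once you have that, the claim for general $M$ with $\Ass M\subseteq Y_S$ follows from $\mathrm{injdim}\,D_S\le 1$ by embedding $M$ in its injective envelope. Your dimension-shifting step (``$\Ext^1_{R_S}(M,D_S)\cong\Ext^1_{R_S}(M_{\mathrm{cosyz}},(E_1)_S)$, and $(E_1)_S$ is injective'') does not give this: from an embedding $M\hookrightarrow (E_0)_S^\kappa$ you only get a surjection $\Ext^1\big((E_0)_S^\kappa,D_S\big)\twoheadrightarrow\Ext^1(M,D_S)$, and showing the source vanishes is exactly the Ext-vanishing you are trying to prove. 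The paper's proof supplies the missing ingredient: by Lemma~\ref{lem:matlis}, $E(R/\p)=E_{R_S}(R_S/\p_S)$ as $R$-modules and $B_S$ is an $R$-direct summand of $B$, so any $R_S$-map $E_{R_S}(R_S/\p_S)\to B_S$ can be regarded as an $R$-map into $B$, factored through the special $\I(Y)$-precover $\varphi$ (Lemma~\ref{lem:cotilt-resolution}(i)), and then re-localized. That lifting argument is the heart of (i) and is absent from your proposal.

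In part (ii), your choice of $C=\prod_{\p}C_\p$ runs into the product-versus-localization obstacle you identified, and the workaround you propose is based on a false premise. For $\p\notin Y$ with $\p\cap S\ne\emptyset$, the module $C_\p$ sits in $0\to C_\p\to E_0^\p\to E(R/\p)\to 0$ with $E_0^\p\in\I(Y)$; localizing kills $E(R/\p)$ but \emph{not} $E_0^\p$, so $(C_\p)_S\cong (E_0^\p)_S$ is typically nonzero. Hence the first block of the product does not localize to $0$, and even if it did, ``a product of modules that all localize to $0$ localizes to $0$'' fails for non-finitely-generated $S$. The paper avoids this entirely by choosing a different representative of the equivalence class: $C=C_0\oplus C_1$ with $C_0=\bigoplus_{\p\in Y}E(R/\p)$ and $C_1=\Ker\varphi$, where $\varphi$ is the $\I(Y)$-cover of $B=\bigoplus_{\q\notin Y}E(R/\q)$. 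Direct sums commute with localization, so $C_S$, $(C_0)_S$, $B_S$ and $\varphi_S$ behave cleanly. The remaining inclusion ${}^\perp C_S\subseteq\Cog C_S$ is then proved by a short contradiction: if $M\in{}^\perp C_S$ had $\q_S\in\Ass M\setminus Y_S$, the embedding $R_S/\q_S\to B_S$ cannot factor through $\varphi_S$ by Lemma~\ref{lem:hom-injectives} (since $Y_S$ is generalization closed), giving $\Ext^1_{R_S}(R_S/\q_S,C_S)\ne 0$ and, as $\mathrm{injdim}\,C_S\le 1$ and $R_S/\q_S\hookrightarrow M$, also $\Ext^1_{R_S}(M,C_S)\ne 0$. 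If you want to salvage your version, you would essentially be forced back to a direct-sum construction; as stated, the product choice cannot be repaired.
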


\begin{proof} (i) Since $\Ass D \subseteq Y$, we have $\Ass {D_S} \subseteq Y_S$. So the $1$-cotilting class $\C (Y_S)$ contains $\mbox{Cog }D_S$.

Let $0 \to D \to A \overset{\varphi}\to B \to 0$ be the minimal injective coresolution of $D$ in $\Mod R$. The $R_S$-module $D_S$ has injective dimension $\leq 1$, so for the inclusion $\C (Y_S) \subseteq {}^{\perp} D_S$, it suffices to prove that $\Ext ^1_{R_S}(E_{R_S}(R_S/\p_S),{D_S}) = 0$ for all $\p \in Y_S$, or the equivalent claim that $\varphi_S$ is a $\I (Y_S)$-precover of $B_S$. 

However, $\varphi$ is a (special) $\I (Y)$-precover of $B$ by Lemma \ref{lem:cotilt-resolution}(i). Let $\p \in Y_S$ and consider $\psi \in \Hom _{R_S}({E_{R_S}(R_S/\p_S)},{B_S})$. By Lemma \ref{lem:matlis}, as $R$-module, $B_S$ is a direct summand in $B$. Let $\psi ^\prime$ denote $\psi$, but viewed as an $R$-homomorphism from $E(R/\p) = E_{R_S}(R_S/\p_S)$ to $B$. Since $\varphi$ is a $\I (Y)$-precover of $B$, $\psi ^\prime$ factors through $\varphi$. That is, there exists $\xi \in 
\Hom _R({E(R/\p)},A)$ such that $\varphi \xi = \psi$. Localizing at $S$, we get $\varphi_S (\xi \otimes _R R_S) = \psi^\prime \otimes _R R_S = \psi$. This proves our claim. 

If $D_S$ is a cotilting $R_S$-module, then $\mbox{Cog }D_S = {}^\perp D_S$, so $D_S$ induces $\C (Y_S)$. 

(ii) If $Y = \Spec R$, then $\mathcal C = \Mod R$; in view of Lemma \ref{lem:matlis}, it suffices to take $C = \bigoplus_{\p \in \Spec R} E(R/\p)$.

Assume $Y \subsetneq \Spec R$. Let $B = \bigoplus_{\q \in \Spec R \setminus Y} E(R/\q)$ and consider the short exact sequence 
$0 \to C_1 \to A \overset{\varphi}\to B \to 0$ where $\varphi$ is the $\I (Y)$-cover of $B$. Let $C_0 = \bigoplus_{\p \in Y} E(R/\p)$.
As in Section \ref{sec:constr}, we see that $C = C_0 \oplus C_1$ is a $1$-cotilting module inducing the class $\C$.   

Let $S$ be any multiplicative subset of $R$. In view of part (i), it remains only to prove that ${}^{\perp} C_S \subseteq \mbox{Cog }C_S$. 
Let $M \in {}^{\perp} C_S$. If $\q_S \in \Ass {M} \setminus Y_S$, then there is a monomorphism $\nu: R_S/\q_S \to B_S$. Since $Y_S$ is closed under generalization, $\nu$ does not factorize through $\varphi_S$ by Lemma \ref{lem:hom-injectives}. Hence $\Ext ^1_{R_S}(R_S/\q_S,{C_S}) \neq 0$ and, since $R_S/\q_S\hookrightarrow M$,  also $\Ext ^1_{R_S}(M,{C_S}) \ne 0$, a contradiction. This proves that $M \in \C (Y_S) \subseteq \Cog ((C_0)_S) = \mbox{Cog }C_S$.  
\end{proof}

\begin{Remark} \label{rem:minimal-ample}
It is not difficult to observe that $C$ as in the proof of Theorem~\ref{thm:1-cot} is in fact a minimal ample cotilting module for $\C$. That is, if $D$ is any other ample cotilting module for $\C$, then $C$ is isomorphic to a direct summand of $D$. Again, a minimal ample cotilting module for $\C$ is unique up to isomorphism by Lemma \ref{lem:bumby}.
\end{Remark}

\medskip
We will now show that Theorem \ref{thm:1-cot} cannot be extended to $2$-cotilting classes. To this purpose, assume that \emph{$R$ is a complete regular local ring $R$ of Krull dimension $2$}. Note that $R$ is a unique factorization domain. 

We will construct a $2$-cotilting class $\C \subseteq \Mod R$ which is not induced by any ample cotilting module. In fact, we will prove a stronger claim: If $D$ is any cotilting module inducing $\C$, then its localization $D_{\p}$ at any $\p \in \Spec R$ of height $1$ is not a cotilting module in $\Mod{R_\p}$. 

We know that $\C$ is of the form $\C = \C(Y_0, Y_1)$ with $Y_0 \subseteq Y_1$ generalization closed subsets of $\Spec R$ \st $Y_i$ contains all primes of height $i$ for $i = 0, 1$. We make the following particular choice:
\[ Y_0 = Y_1 = \Spec R \setminus \{\m\}, \]
where $\m \in \Spec R$ is the maximal ideal. 

First we collect some information about the minimal cotilting module $C$ inducing our particular $\C$.

\begin{Lemma} \label{lem:primes-small-ht}
Let $\p \in \Spec R$ be a prime of height at most $1$. Then $\Ext^2_R\big(E(R/\p),R\big) = 0$.
\end{Lemma}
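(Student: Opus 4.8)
The plan is to compute $\Ext^i_R(E(R/\p),R)$ using the fact that $R$ is a complete regular local ring of Krull dimension $2$. First I would recall that over a regular local ring, $R$ has finite global (hence injective) dimension equal to $2$, so $\Ext^i_R(-,R)$ vanishes for $i > 2$; the content of the lemma is specifically the vanishing in degree $2$ for primes of small height. Since $\p$ has height $\leq 1$, I would split into the two cases $\operatorname{ht}\p = 0$ and $\operatorname{ht}\p = 1$. When $\operatorname{ht}\p = 0$, $\p = (0)$ because $R$ is a domain, so $E(R/\p) = E(R) = Q$, the quotient field of $R$; since $Q$ is flat, $\Ext^i_R(Q,R) = 0$ for $i \geq 1$ would not be immediate, so instead I would use that $Q = \varinjlim R_f$ is a directed union of free modules localized at elements, and compute via $\Ext^2_R(Q,R)$ directly — here it is cleaner to note $Q_\m$-related vanishing, or simply to treat this case as the $\p$ of height $1$ argument by a limit/localization argument.

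The cleaner uniform approach: I would use Matlis duality. Since $R$ is complete local with maximal ideal $\m$ and residue field $k$, let $E = E(R/\m)$ be the Matlis dual functor $(-)^\vee = \Hom_R(-,E)$. By local duality over the regular (hence Gorenstein, Cohen--Macaulay) local ring $R$ of dimension $2$, one has $\Ext^i_R(M,R)^\vee \cong H^{2-i}_\m(M)$ for finitely generated $M$ — but $E(R/\p)$ is not finitely generated, so I would instead argue as follows. For $\p$ of height $1$: $R/\p$ has dimension $1$, so $\Ext^2_R(R/\p,R) \cong H^0_\m(R/\p)^\vee = 0$ since $R/\p$ is a $1$-dimensional domain and hence has depth $\geq 1$, i.e. no $\m$-torsion. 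Then $E(R/\p) = E_{R_\p}(k(\p))$ is built from $R/\p$ by a direct limit: $E(R/\p) = \varinjlim \operatorname{Hom}_R(R/\p^{(n)}, E(R/\p))$-type filtration, or more concretely $E(R/\p)$ has a filtration whose factors are (copies of) $k(\p)$ localized appropriately, and each such factor $N$ satisfies $\operatorname{ht}\operatorname{Ann} N = 1$, so $\Ext^2_R(N,R) = 0$ by the same depth/dimension argument applied to the $1$-dimensional quotient ring. Using that $\Ext^2_R(-,R)$ is right exact at the top (since $R$ has injective dimension $2$) and commutes with direct limits in the relevant sense, I would conclude $\Ext^2_R(E(R/\p),R) = 0$. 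For $\p = (0)$: $E(R) = Q$, and $Q$ is a localization of $R$, so $\Ext^2_R(Q,R) \cong Q \otimes_R \Ext^2_R(R,R) = 0$ trivially — actually $\Ext^2_R(R,R)=0$ already, so this case is immediate.

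Concretely, the key steps in order: (1) reduce to $\p$ of height exactly $1$ since the height $0$ case gives $E(R) = Q$ and $\Ext^{\geq 1}_R(Q,R) = 0$ as $Q$ is a flat localization; (2) recall $\operatorname{injdim}_R R = 2$ so $\Ext^i_R(M,R) = 0$ for all $i > 2$ and all $M$; (3) for the $1$-dimensional domain $R/\p$, show $\Ext^2_R(R/\p,R) = 0$ using that $\operatorname{depth}(R/\p) \geq 1$ (it is a domain, so a nonzerodivisor exists) together with the Auslander--Buchsbaum-type bound, or directly: $\operatorname{grade}(\m, R/\p) \geq 1$ forces $\Ext^2_R(R/\p,R)$, which is the top Ext, to vanish because a nonzerodivisor $x$ on $R/\p$ acts invertibly on it while also annihilating it; (4) observe $E(R/\p)$ is a directed union of finitely generated submodules $M_\lambda$ each annihilated by a power of $\p$, so each $M_\lambda$ is a module over $R/\p^{(n)}$, a $1$-dimensional ring, giving $\Ext^2_R(M_\lambda,R) = 0$ by the same top-degree/depth argument; (5) pass to the limit: $\Ext^2_R(E(R/\p),R) = \Ext^2_R(\varinjlim M_\lambda, R)$, and since $\Ext^2$ is the top nonvanishing Ext, the natural map $\Ext^2_R(\varinjlim M_\lambda,R) \to \varprojlim \Ext^2_R(M_\lambda,R) = 0$ is injective (this uses that $\Ext^3$ vanishes, killing the $\varprojlim^1$ obstruction), so $\Ext^2_R(E(R/\p),R) = 0$.

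The main obstacle I anticipate is step (5): making rigorous the behavior of $\Ext^2$ under the direct limit $E(R/\p) = \varinjlim M_\lambda$. The functor $\Ext^2_R(-,R)$ does not in general commute with direct limits, but the saving grace is that it is the \emph{top} non-vanishing cohomological functor (since $\operatorname{injdim}_R R = 2$), and a right-exact-at-the-top argument — using the long exact sequence for $0 \to M_\lambda \to M_{\lambda'} \to M_{\lambda'}/M_\lambda \to 0$ and $\Ext^3_R(-,R) = 0$ — shows that $\Ext^2_R(-,R)$ sends directed colimits of such modules to colimits, or at least that vanishing is preserved. I would need to phrase this carefully, perhaps by writing $E(R/\p)$ as the cokernel term in a presentation by injectives supported on $\operatorname{height} \leq 1$ primes and invoking that $\Ext^2$ against $R$ of an injective with no embedded $\m$ vanishes. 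Alternatively, and more robustly, I would invoke local duality directly in the form: for $\p \neq \m$, $\Gamma_\m(E(R/\p)) = 0$ and more relevantly $H^0_\m(E(R/\p)) = 0$ and the Matlis dual computation $\Ext^2_R(N,R)^\vee \cong H^0_\m(N)$ holds for the relevant class of modules $N$ with support not containing $\m$ in the wrong way — but since $E(R/\p)$ is $\m$-torsion-free for $\p \ne \m$, this gives $\Ext^2_R(E(R/\p),R) = 0$. Deciding which of these formulations is cleanest and fully justified for non-finitely-generated modules is where the real care lies.
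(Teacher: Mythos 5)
Your proposal identifies the right reduction (compute against $R/\p$ or $k(\p)$ first, then somehow pass to $E(R/\p)$) and correctly pinpoints step (5) -- the passage to the non-finitely-generated module $E(R/\p)$ -- as the crux. But you do not actually bridge that gap, and the tools you reach for do not work as stated. The Milnor/$\varprojlim^1$ argument is only available for \emph{countable} telescopes; for a general directed system (and $E(R/\p) = \varinjlim M_\lambda$ over all finitely generated submodules is typically uncountable) the obstruction lives in higher derived limits $\varprojlim^k$, and $\Ext^3 = 0$ does not kill them. Local duality in the form $\Ext^{2-i}_R(M,R)^\vee \cong H^i_\m(M)$ is a theorem about \emph{finitely generated} $M$, so invoking it for $E(R/\p)$ is circular without a separate argument. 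There is also a concrete error in your height-$0$ case: $\Ext^2_R(Q,R) \cong Q \otimes_R \Ext^2_R(R,R)$ is false --- Ext does not localize in a non-finitely-generated first argument (compare $\Ext^1_{\mathbb Z}(\mathbb Q,\mathbb Z)\neq 0$). The conclusion $\Ext^2_R(Q,R)=0$ is true here, but not for that reason.

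The ingredient you are missing, and which the paper uses, is \emph{pure-injectivity of $R$}. Since $R$ is complete local noetherian, $R \cong \End_R(E(R/\m))$ is pure-injective. For a pure-injective codomain, $\Ext$-vanishing passes to direct limits (this is \cite[Lemma 6.28]{GT12}), so from $\Ext^2_R(R/\p,R)=0$ (which follows since $\p$ is $0$ or principal, hence $\operatorname{pd}_R R/\p \leq 1$) one gets $\Ext^2_R(k(\p),R)=0$ because $k(\p)=\varinjlim R/\p$; and then $\Ext^2_R(E(R/\p),R)=0$ because $E(R/\p)$ is $k(\p)$-filtered (reduce to Eklof's Lemma via the first cosyzygy $\mho R$). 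Without pure-injectivity of the target there is no reason for $\Ext^2_R(-,R)$ to behave well under filtered colimits, and that is exactly where your sketch stalls. Your step (3) on $\Ext^2_R(R/\p,R)=0$ is fine (Auslander--Buchsbaum or, as the paper does, $\p$ principal in a UFD), but it is the easy part.
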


\begin{Proof}
We know that either $\p = 0$ or $\p$ is generated by an irreducible element of $R$. In either case the projective dimension of $R/\p$ is at most $1$ and $\Ext^2_R(R/\p,R) = 0$. Since $R \cong \End_R\big(E(R/\m)\big)$ is pure-injective and $k(\p) = R_\p \otimes_R R/\p$ is a direct limit of copies of $R/\p$, it follows from~\cite[Lemma 6.28]{GT12} that $\Ext^2_R\big(k(\p),R\big) = 0$. Finally, $E(R/\p)$ is $k(\p)$-filtered and hence $\Ext^2_R\big(E(R/\p),R\big) = 0$.
\end{Proof}

\begin{Cor} \label{cor:resolution-of-R}
Let
\[ 0 \lora R \lora Q \to \bigoplus_{\htt \p = 1} E(R/\p) \mapr{\varphi} E(R/\m) \lora 0 \]
be a minimal injective coresolution of $R$. Then $\varphi$ is an $\I(Y_1)$-cover of $E(R/\m)$.
\end{Cor}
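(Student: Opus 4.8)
The plan is to check the two clauses of Definition~\ref{def:pre}: that $\varphi$ is an $\I(Y_1)$-precover of $E(R/\m)$, and that it is right minimal, so that it is the $\I(Y_1)$-cover. Throughout write $\mho^1 R = Q/R$; since $R$ is a domain, $Q = E_R(R)$ is the fraction field $K$ of $R$. For the precover property, note that the given coresolution restricts to an exact sequence $0 \to \mho^1 R \to \bigoplus_{\htt\p=1}E(R/\p) \mapr{\varphi} E(R/\m) \to 0$. For $E' \in \I(Y_1)$, applying $\Hom_R(E',-)$ and using injectivity of $\bigoplus_{\htt\p=1}E(R/\p)$, the map $\Hom_R(E',\varphi)$ is surjective if and only if $\Ext^1_R(E',\mho^1 R) = 0$, and dimension shifting along $0 \to R \to Q \to \mho^1 R \to 0$ (with $Q$ injective) identifies this group with $\Ext^2_R(E',R)$. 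As $Y_1 = \Spec R \setminus \{\m\}$ and $\dim R = 2$, every $E' \in \I(Y_1)$ is a direct sum of copies of modules $E(R/\p)$ with $\htt\p \le 1$, and $\Ext^2_R(-,R)$ carries this sum to a product; so $\Ext^2_R(E',R) = 0$ by Lemma~\ref{lem:primes-small-ht}, whence $\varphi$ is an $\I(Y_1)$-precover.

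For the cover property, use that $\I(Y_1)$ is covering (Proposition~\ref{prop:properties}(3)): the $\I(Y_1)$-cover of $E(R/\m)$ is a direct summand of the precover $\varphi$ (\cite[5.1.2]{EJ11}), so decomposing $\bigoplus_{\htt\p=1}E(R/\p) = A \oplus B$ we may assume $\varphi|_A$ is the cover and $\varphi|_B = 0$, i.e.\ $B \subseteq \Ker\varphi = \mho^1 R$; it then suffices to show $B = 0$. Suppose not and pick $\p_0 = (f_0) \in \Ass{B}$, a height-$1$ prime (principal since $R$ is a UFD). Then $B$ contains a copy of $E(R/\p_0)$, which by Lemma~\ref{lem:hom-injectives} projects to $0$ onto each summand $E(R/\p)$ with $\p \ne \p_0$; being a nonzero injective submodule of the indecomposable module $E(R/\p_0)$, it must be the $\p_0$-coordinate summand of $\bigoplus_{\htt\p=1}E(R/\p)$, which therefore lies inside $\mho^1 R$. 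This coordinate summand is precisely the $(f_0)$-power torsion submodule of $\bigoplus_{\htt\p=1}E(R/\p)$ (each $E(R/\p)$ with $\p\ne\p_0$ is an $R_\p$-module on which $f_0$ acts invertibly, while every element of $E(R/\p_0)$ is killed by a power of $f_0$); hence, since torsion submodules commute with intersections, $E(R/\p_0) = \mho^1 R \cap E(R/\p_0)$ equals the $(f_0)$-power torsion submodule of $\mho^1 R = K/R$, in particular is injective.

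This last point is the crux, and it is where $\dim R = 2$ is used. The $(f_0)$-power torsion submodule of $K/R$ is $\{q \in K : f_0^N q \in R \text{ for some } N\}/R = R_{f_0}/R$, and $R_{f_0}/R$ is \emph{not} injective: if it were, the exact sequence $0 \to R \to R_{f_0} \to R_{f_0}/R \to 0$ together with $\mathrm{id}_R R_{f_0} \le 1$ — which holds because $R_{f_0}$ is a one-dimensional regular domain, so that $K/R_{f_0}$ is a direct sum of indecomposable injective $R$-modules by Lemma~\ref{lem:matlis} — would give $\mathrm{id}_R R \le 1$, contradicting $\mathrm{id}_R R = \dim R = 2$. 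This contradiction shows $B = 0$, so $\varphi$ is the $\I(Y_1)$-cover of $E(R/\m)$. (For a one-dimensional regular local ring the analogous cosyzygy $K/R$ is itself injective, so the statement genuinely fails there and $\dim R = 2$ is essential.)
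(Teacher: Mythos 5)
Your proposal is correct, and the precover half coincides with the paper's: both reduce $\Ext^1_R(E',\mho^1 R)=0$ to $\Ext^2_R(E',R)=0$ by dimension shifting and invoke Lemma~\ref{lem:primes-small-ht}. Where you diverge is in establishing minimality. The paper observes that $\Hom_R\big(E(R/\p),E(R/\m)\big)\neq 0$ for every height-$1$ prime $\p$ (Lemma~\ref{lem:hom-injectives}, as $\p\subseteq\m$), and since such a map must factor through the cover $\psi$ while $\Hom_R\big(E(R/\p),E(R/\q)\big)=0$ for distinct height-$1$ primes, each coordinate $E(R/\p)$ is forced into the domain of $\psi$; hence $\psi=\varphi$. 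You instead argue by contradiction inside the kernel: if a summand $E(R/\p_0)$ of $\bigoplus_{\htt\p=1}E(R/\p)$ lay in $B\subseteq\Ker\varphi=K/R$, it would coincide with the $(f_0)$-power torsion submodule $R_{f_0}/R$ of $K/R$, which you show is not injective because $\mathrm{id}_R R_{f_0}\le 1$ (via Lemma~\ref{lem:matlis}) while $\mathrm{id}_R R=2$. Both arguments are sound. The paper's is shorter and applies the precover property a second time, needing no computation in $K/R$; yours is more structural, making explicit exactly how the hypothesis $\dim R=2$ enters (indeed, as you note, for $\dim R=1$ the cosyzygy $K/R$ is itself injective and the conclusion genuinely fails). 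It does lean on $R$ being a UFD so that height-$1$ primes are principal, but that is available here since $R$ is regular local. The one thing to be careful to state explicitly is the Krull--Schmidt--Azumaya decomposition of the injective module $\bigoplus_{\htt\p=1}E(R/\p)$, which guarantees that the splitting $A\oplus B$ refines the coordinate decomposition — you implicitly use this when you identify the copy of $E(R/\p_0)$ in $B$ with the $\p_0$-coordinate.
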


\begin{Proof}
The (special) precovering property was proved in Lemma \ref{lem:primes-small-ht}. Moreover, $\I(Y_1)$ is a covering class by Proposition \ref{prop:properties}(3), so the $\I(Y_1)$-cover $\psi$ of $E(R/\m)$ is a direct summand in $\varphi$ \cite[5.1.2]{EJ11}. By Lemma \ref{lem:hom-injectives}, $E(R/\p)$ must be a direct summand of the domain of $\psi$ for each $\p$ of height $1$, whence $\psi = \varphi$.  
\end{Proof}

Thus, by the construction of the minimal cotilting module for $\C$, $C$ must contain a direct summand $C'$ \st there is a short exact sequence
\[ 0 \lora C' \lora Q^{(I_0)} \oplus \bigoplus_{\htt\p = 1} E(R/\p)^{(I_\p)} \mapr{\vartheta} Q/R \lora 0, \]
where $\vartheta$ is an $\I(Y_0)$-cover.

Thus $C'$ has a minimal injective coresolution of the form
\[ 0 \lora C' \lora Q^{(I_0)} \oplus \bigoplus_{\htt\p = 1} E(R/\p)^{(I_\p)} \lora \bigoplus_{\htt \p = 1} E(R/\p) \mapr{\varphi} E(R/\m) \lora 0 \]
and its localization $C'_\p$ at any prime ideal $\p$ of height $1$ has a minimal injective coresolution of the form
\[ 0 \lora C'_{\p} \lora Q^{(I_0)} \oplus E(R/\p)^{(I_\p)} \lora E(R/\p) \lora 0 \]
In particular, $C'_{\p}$ is not injective.

\begin{Th} \label{thm:2-cot} There is no ample cotilting module inducing the class $\C$. Moreover, if $D$ is any cotilting module inducing $\C$ and $\p$ any prime ideal of height $1$, then $D_{\p}$ is not a cotilting $R_{\p}$-module.
\end{Th}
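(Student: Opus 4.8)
The plan is to prove the second, stronger assertion directly; the first then follows at once, since by Definition~\ref{def:max} an ample cotilting module inducing $\C$ would in particular localize (taking $S = R\setminus\p$) at a height-$1$ prime $\p$ to a cotilting $R_\p$-module. So fix a cotilting module $D$ with ${}^\perp D = \C$ and a prime $\p$ of height $1$. First I would invoke minimality: since any two cotilting modules inducing $\C$ are equivalent, the minimal cotilting module $C$ provided by Theorem~\ref{thm:min-cotilt} is isomorphic to a direct summand of $D$. In the present situation $\Scal_1 = \emptyset$ and $\Scal_2 = \{\m\}$, so $C = \big(\bigoplus_{\htt\q = 1} E(R/\q)\big) \oplus C'$, where $C' = C_{\{\m\}}$ is precisely the summand whose injective coresolution and localizations were computed right before the statement. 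As $\p$ is one of the maximal elements of $Y_0 = \Spec R \setminus \{\m\}$, both $E(R/\p)$ and $C'$ are direct summands of $C$, hence of $D$.

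Next I would localize at $\p$. Localization preserves direct summands, so $C'_\p$ and $E(R/\p)_\p = E_{R_\p}(k(\p))$ are direct summands of $D_\p$. By the computation preceding the statement, $C'_\p$ is not injective, so $D_\p$ is not injective; in particular $D_\p$ is not a $0$-cotilting (injective cogenerator) $R_\p$-module. On the other hand $\p R_\p \in \Ass{E_{R_\p}(k(\p))} \subseteq \Ass{D_\p}$, so $D_\p$ is not torsion-free over $R_\p$.

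It then remains to exclude the possibility that $D_\p$ is $1$-cotilting. Here I would use that $R_\p$ is a regular local ring of Krull dimension $1$, i.e.\ a DVR, hence of global dimension $1$; were $D_\p$ cotilting it would therefore have injective dimension at most $1$, so, being non-injective, exactly $1$, and ${}^\perp D_\p$ would be a $1$-cotilting class in $\Mod R_\p$. By Theorem~\ref{thm:TAMS} applied over $R_\p$, whose spectrum is $\{0,\,\p R_\p\}$, the only $1$-cotilting class is $\C(\{0\})$, namely the class of torsion-free $R_\p$-modules. But $D_\p \in {}^\perp D_\p$ by (C2), which would force $\Ass{D_\p} \subseteq \{0\}$, contradicting the previous paragraph. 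Hence $D_\p$ is not cotilting, and both assertions of the theorem follow.

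There is no deep obstacle here: the proof is essentially a bookkeeping of where the summands of the minimal cotilting module go under localization. The one point that requires attention is exhausting the possible localized cotilting classes over the DVR $R_\p$ — the non-injective summand $C'_\p$ (whose existence rests on the explicit structure of $C$ from Theorem~\ref{thm:min-cotilt} together with the preamble computation) rules out the injective-cogenerator case, while the torsion summand $E_{R_\p}(k(\p))$, combined with the classification Theorem~\ref{thm:TAMS}, rules out the remaining $1$-cotilting case.
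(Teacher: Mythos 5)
Your proof is correct and follows essentially the same route as the paper: identify the two summands $C'$ and $E(R/\p)$ of the minimal cotilting module, localize at $\p$, and observe that the non-injective summand $C'_\p$ rules out the injective-cogenerator case while the torsion summand $E_{R_\p}(k(\p))$ rules out the torsion-free case. The only cosmetic difference is that you derive the dichotomy of cotilting classes over the DVR $R_\p$ from Theorem~\ref{thm:TAMS} together with (C2), where the paper simply cites it as a known fact that a DVR has exactly two equivalence classes of cotilting modules.
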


\begin{Proof}
Suppose for a contradiction that $D$ is an ample $2$-cotilting module inducing the class $\C$. In particular, assume $D_{\p}$ is a cotilting $R_{\p}$-module for any fixed prime of height $1$. Since $R_{\p}$ is a discrete valuation domain, there are only two equivalence classes of cotilting modules: the injective cogenerators and the flat cotilting modules. Since $C^\prime_{\p}$ is a direct summand of $D_{\p}$, the first option does not occur by the dicussion above.

However, $D_{\p}$ cannot be a flat (or equivalently torsion--free) $R_{\p}$-module either. Indeed, Lemma \ref{lem:cotilt-resolution}(ii) implies that $E(R/\p)$ is a direct summand in $D_{\p}$. Thus, $D_{\p}$ is not cotilting in $\Mod{R_{\p}}$.
\end{Proof}

\bibliographystyle{abbrv}
\bibliography{ncotiltingnoetherian2}

\end{document}